\newcommand{\email}{}
\definecolor{labelkey}{rgb}{0,0.08,0.45}
\definecolor{refkey}{rgb}{0,0.6,0.0}
\definecolor{Brown}{rgb}{0.45,0.0,0.05}
\definecolor{dgreen}{rgb}{0.00,0.49,0.00}
\definecolor{dblue}{rgb}{0,0.08,0.75}
\definecolor{LightGray}{RGB}{230, 234, 237}
\definecolor{DarkGray}{RGB}{205, 210, 216}
\definecolor{LightBlue}{RGB}{220,230,245}
\definecolor{DarkBlue}{RGB}{47,95,127}
\definecolor{LightGreen}{RGB}{220,245,230}
\newcommand{\Rset}{\ensuremath{\mathbb{R}}}
\newcommand{\Nset}{\ensuremath{\mathbb{N}}}
\newcommand{\prox}{\ensuremath{\operatorname{prox}}}
\newcommand{\Id}{\ensuremath{\operatorname{Id}}}
\newcommand{\Argd}{\ensuremath{{\operatorname{Arg}}}}
\newcommand{\Argmind}[1]{\ensuremath{\underset{#1}{\operatorname{Argmin}}\;}}
\newcommand{\argmind}[1]{\ensuremath{\underset{#1}{\operatorname{argmin}}\;}}
\newcommand{\minimize}[1]{\ensuremath{\underset{#1}{\operatorname{minimize}}\;}}
\newcommand{\Idt}{\ensuremath{\operatorname{Id}}}
\newcommand{\Dics}{\ensuremath{D}}
\newcommand{\Dica}{\ensuremath{\Gamma}}
\newcommand{\NN}{\ensuremath{G}}
\newcommand{\Am}{\ensuremath{\mathcal{A}}}
\newcommand{\Sm}{\ensuremath{\mathcal{S}}}
\newcommand{\feedAF}{\ensuremath{T^{\Am}}}
\newcommand{\feedSF}{\ensuremath{T^{\Sm}}}
\newcommand{\Ltrain}{\ensuremath{L_{\text{train}}}}
\newcommand{\Lpnp}{\ensuremath{L_{\text{PnP}}}}
\newcommand{\Lden}{\ensuremath{L_{\text{den}}}}
\newcommand{\pgx}[1]{\ensuremath{u^\ddagger_{#1}}}
\newtheorem{theorem}{Theorem}[section]
\newtheorem{proposition}[theorem]{Proposition}
\theoremstyle{plain}{\theorembodyfont{\rmfamily}%
}
\theoremstyle{plain}{\theorembodyfont{\rmfamily}%
}
\theoremstyle{plain}{\theorembodyfont{\rmfamily}%
}
\theoremstyle{plain}{\theorembodyfont{\rmfamily}%
}
\theoremstyle{plain}{\theorembodyfont{\rmfamily}%
\newtheorem{example}[theorem]{Example}}
\theoremstyle{plain}{\theorembodyfont{\rmfamily}%
\newtheorem{remark}[theorem]{Remark}}
\theoremstyle{plain}{\theorembodyfont{\rmfamily}%
}
\theoremstyle{plain}{\theorembodyfont{\rmfamily}%
}
\theoremstyle{plain}{\theorembodyfont{\rmfamily}%
\newtheorem{model}[theorem]{Model}}
\numberwithin{equation}{section}
\begin{document}

\title{\sffamily Analysis and Synthesis Denoisers \\
for Forward-Backward Plug-and-Play Algorithms
\thanks{This work was funded by the Royal Society of Edinburgh and the EPSRC grant EP/X028860 and by grants from Digiteo France.}}
\author{
Matthieu Kowalski$^\bullet$, Beno\^{i}t Mal\'ezieux$^\diamond$, Thomas Moreau$^\diamond$, Audrey Repetti$^{\dagger\star}$ 
\\[5mm]
\small
\small $^\bullet$ Laboratoire Interdisciplinaire des Sciences du Num\'eriques, Inria, Universit\'e Paris-Saclay, CNRS, \\
\small Gif-sur-Yvette, France \\
\small $^\diamond$ Inria, Universit\'e Paris-Saclay, CEA, Palaiseau, France \\
\small $^\dagger$ School of Mathematics and Computer Sciences and School of Engineering and Physical Sciences, \\
\small Heriot-Watt University, Edinburgh, UK\\
\small $^\star$ Maxwell Institute for Mathematical Sciences, Edinburgh, UK \\[3mm]
\small \email{matthieu.kowalski@universite-paris-saclay.fr}, \email{benoit.malezieux@free.fr}, \\ 
\small \email{thomas.moreau@inria.fr}, \email{a.repetti@hw.ac.uk}
}
\date{}

\maketitle

\vskip 8mm

\begin{abstract}
    In this work we study the behavior of the forward-backward (FB) algorithm when the proximity operator is replaced by a sub-iterative procedure to approximate a Gaussian denoiser, in a Plug-and-Play (PnP) fashion. 
    Specifically, we consider both analysis and synthesis Gaussian denoisers within a dictionary framework, obtained by unrolling dual-FB iterations or FB iterations, respectively.
    We analyze the associated minimization problems as well as the asymptotic behavior of the resulting FB-PnP iterations. In particular, we show that the synthesis Gaussian denoising problem can be viewed as a proximity operator. For each case, analysis and synthesis, we show that the FB-PnP algorithms solve the same problem whether we use only one or an infinite number of sub-iteration to solve the denoising problem at each iteration. To this aim, we show that each "one sub-iteration" strategy within the FB-PnP can be interpreted as a primal-dual algorithm when a warm-restart strategy is used. 
    We further present similar results when using a Moreau-Yosida smoothing of the global problem, for an arbitrary number of sub-iterations.
    Finally, we provide numerical simulations to illustrate our theoretical results. In particular we first consider a toy compressive sensing example, as well as an image restoration problem in a deep dictionary framework.
\end{abstract}

{\bfseries Keywords.}
Plug-and-Play, forward-backward algorithm, unrolling, deep dictionary learning, inverse imaging problems

{\bfseries MSC.}
    90C59, 
    65K10, 
    68T07, 
    68U10, 
    94A08  


\section{Introduction}\label{Sec:intro}
%
Linear inverse problems play a pivotal role in various scientific disciplines, including
imaging~\cite{ribes2008linear}, neurosciences~\cite{GKH12}, and astrophysics~\cite{St16}.
In these scenarios, an unknown signal $\overline{x} \in \Rset^N$ is observed through a degraded linear system given by
\begin{equation}\label{pb:invpb}
    y = A \overline{x} + \varepsilon w,
\end{equation}
where $y \in \Rset^M$ represents the degraded observations, $A \colon \Rset^N \to \Rset^M$ models a linear measurement operator, $w \in \Rset^M$ is a realization of an \emph{i.i.d.} standard normal random variable, and $\varepsilon>0$. The inverse problem~\eqref{pb:invpb} aims to find an estimate $\widehat{x} \in \Rset^N$ of $\overline{x}$ from the degraded measurements $y$.
This problem is often challenging due to issues such as under-sampling and noise, rendering it ill-posed and/or ill-conditioned.

To address these challenges, practitioners commonly employ \textit{prior} knowledge to guide the selection of a
plausible solution. A widely adopted approach involves a maximum \textit{a posteriori} (MAP) strategy, defining
$\widehat{x}$ as a minimizer of a penalized least squares objective:
\begin{equation}\label{pb:min_gen}
    \text{find }
    \widehat{x} \in \Argmind{x \in \Rset^N} \frac12 \| Ax - y \|^2 + p(x),
\end{equation}
where $p \colon \Rset^N \to (-\infty, +\infty]$ is a convex, lower semi-continuous, proper function, representing a penalization term that incorporates prior information on the target solution. The choice of the prior is crucial for both reconstruction performance and computational complexity. Notably, functions summarizing signal structures based on sparsity have been extensively studied in the literature~\cite{elad2010sparse, bach2012optimization, foucart2013invitation}. 
Proximal algorithms~\cite{combettes2011proximal, komodakis2015playing} are efficient to solve the resulting minimization problem~\eqref{pb:min_gen}. They are scalable and versatile, offering convergence guarantees and maintaining their status as the state-of-the-art for solving inverse problems for more than two decades. A celebrated proximal algorithm, extensively used in the literature for solving~\eqref{pb:min_gen}, is the forward-backward (FB) algorithm, also known as proximal-gradient or ISTA~\cite{Combettes2005}. This scheme alternates at each iteration between a gradient step on the differentiable least squares function and a proximal step on the non-smooth function $p$. This algorithm reads
\begin{equation}    \label{algo:FB}
    \begin{array}{l}
        x_0 \in \mathbb R^N \\
       \text{for } k = 0, 1, \ldots \\
       \left\lfloor
       \begin{array}{l}
        x_{k+1} = \prox_{\tau p} \big( x_k - \tau A^*(A x_k - y) \big),
       \end{array}
       \right.
    \end{array}
\end{equation}
where $\prox_{\tau p}$ is the so-called proximity operator of $p$ (see \eqref{def:prox} for its definition), and $A^*$ denotes the adjoint operator of $A$. In \eqref{algo:FB}, $\tau>0$ is a step-size chosen to ensure the convergence of sequence ${(x_k)}_{k\in \Nset}$ to a solution to~\eqref{pb:min_gen}.
In~\eqref{algo:FB}, one has to compute the proximity operator of $p$ (defined in~\eqref{def:prox}) at each iteration. This operator can have an explicit formula for many simple choices of functions $p$ (see e.g.~\cite{combettes2011proximal}). However this is not true in many cases of interest, and the proximity operator must be approximated numerically with sub-iterations.
These sub-iterations can become computationally expensive in practice as they need to accurately approximate $\prox_p$.
Recently, to improve the reconstruction quality and avoid these sub-iterations, proximal algorithms have also been paired with deep learning techniques for solving inverse imaging problems.
Examples include deep dictionary learning (DDL), Plug-and-Play (PnP) algorithms, and unfolded neural networks (see, e.g., \cite{SEM19, brifman2016turning, kamilov2023plug, martin2024pnp, GL10, JP2020} and references therein).
However, providing convergence guarantees for these methods is not straightforward.

A common prior's choice is based on the sparsity of the unknown signal in a certain basis. In this context, the prior $p$ takes the form of
\begin{equation}\label{eq:def_p}
    (\forall x \in \Rset^N)\quad
    p(x) = \lambda g(\Dica x),
\end{equation}
where $\lambda>0$ is a regularization parameter balancing data fidelity and regularization terms, $\Dica \colon \Rset^N \to \Rset^S$ is a linear operator modeling a sparsifying transformed domain, and $g\colon \Rset^S \to \Rset$ is a convex, lower semi-continuous, proper function. The function $g$ is then chosen for promoting sparsity in the transformed domain (e.g., $\ell^1$ norm).
Notable choices for the sparsifying operator $\Dica$ include the Fourier, DCT, or Wavelet transforms~\cite{Mallat2008} as well as the Total Variation (TV) regularization~\cite{rudin1992nonlinear, chambolle2010introduction}, depending on the properties of the target solution.
The structure of the sought signal, through the operator $\Dica$, can also be learned from a ground truth dataset in a supervised setting, leading to dictionary learning (DL) approaches~\cite{AEB06, MBPS09}. 
For such sparsity-based priors, there exists no close-form formula and one typically needs to resort to expansive sub-iterations as well as inexact proximal methods allowing for computational errors to approximate the proximity operator (see, e.g., \cite{Combettes2005}).
In this paper, we investigate the properties of such approaches, reducing the sub-iteration computational burden. We will in particular study convergence properties of the FB iterations~\eqref{algo:FB} when the proximity operator is (very) roughly approximated.

\subsection*{Contributions}
We adopt a PnP framework, where the proximity operator of $p$ in~\eqref{algo:FB} is replaced by a denoiser.
Building our PnP algorithm with FB iterations, we investigate the behavior of the following FB-PnP scheme
\begin{equation}    \label{algo:FB-PnP}
    \begin{array}{l}
        x_0 \in \mathbb R^N \\
       \text{for } k = 0, 1, \ldots \\
       \left\lfloor
       \begin{array}{l}
        x_{k+1} = G \big( x_k - \tau A^*(A x_k - y) \big),
       \end{array}
       \right.
    \end{array}
\end{equation}
where $\tau>0$ and $G\colon \mathbb R^N \to \mathbb R^N$ is a sparsity-based denoising operator.
Specifically, we consider $G$ to be either an analysis or a synthesis dictionary-based denoiser. 
Let $v \in \Rset^N$ be a noisy image.
On the one hand, the Analysis Denoiser (AD) is built as
\begin{equation}\label{pb:AF-intro}
    \text{find } \quad G_{\Dica} (v) \approx
    \prox_{g_\lambda \circ \Dica}(v) =
    \argmind{x \in \Rset^N} \frac12 \| x - v \|^2 +  g_\lambda(\Dica x),
\end{equation}
where $g_\lambda \equiv \lambda g$ and $\Dica \colon \mathbb R^N \to \mathbb R^S$. 
On the other hand, the Synthesis Denoiser (SD) is built as
\begin{equation}\label{pb:SF-intro}
    \text{find} \quad G_{\Dics}(v) 
    = \Dics z^\dagger_{\Dics}
    \;\text{ where }\; 
    z^\dagger_{\Dics} \approx \argmind{z \in \Rset^S} \frac12 \| \Dics z-v \|^2 +  g_\lambda(z),
\end{equation}
where $\Dics \colon \mathbb R^S \to \mathbb R^N$.
In general these two classes of denoisers do not have a closed-form formula (unless for specific cases, e.g., when $\Dica$ is (semi)-orthogonal~\cite{EMR07}), thus requiring sub-iterations to compute their output.
We then aim to investigate the behavior of ${(x_k)}_{k\in \mathbb N}$ when $G_{\Dica}$ and $G_{\Dics}$ are computed with FB-based iterations following an unrolling framework.

\smallskip

Our first contribution is to analyze the minimization problems associated with the proposed AD and SD in~\eqref{pb:AF-intro}-\eqref{pb:SF-intro} when $\Dica$ and $\Dics$ are fixed dictionaries. In particular we show that~\eqref{pb:SF-intro} corresponds to a proximity operator, and that the associated problem is equivalent to a synthesis formulation of problem~\eqref{pb:min_gen}-\eqref{eq:def_p} (see Section~\ref{Ssec:SF-equiv}).
Second, we investigate the behavior of algorithm~\eqref{algo:FB-PnP}, where $G_{\Dica}$ and $G_{\Dics}$ are obtained running $1$ sub-iteration of a FB-based scheme for solving~\eqref{pb:AF-intro}-\eqref{pb:SF-intro}, using a warm-restart strategy (see Sections~\ref{Sec:AD-PnP} and~\ref{Sec:SD-PnP}, respectively). Specifically, we show that the iterations using the AD within a FB-PnP with only $1$ sub-iteration of a dual FB algorithm \cite{combettes2010dualization} are equivalent to those of the Loris-Verhoeven primal-dual algorithm~\cite{loris2011generalization}. Similarly, we show that the iterations using the SD within a FB-PnP with only $1$ sub-iteration of a FB algorithm are equivalent to the FB iterations for solving the full synthesis formulation of problem~\eqref{pb:min_gen}-\eqref{eq:def_p}.
We further investigate the behavior of a similar algorithm to~\eqref{algo:FB-PnP} for solving a Moreau envelope smoothing of problem~\eqref{pb:min_gen}-\eqref{eq:def_p}. In this context, we show that using a warm-restart strategy yields an algorithm which converges toward a solution of the smooth approximated problem, if the operators $G_{\Dica}$ and $G_{\Dics}$ satisfy some sufficient decrease conditions (see Section~\ref{sec:bi-level-approximate}).
Finally, we illustrate our theoretical study on two simulation examples: a toy compressive sensing example, and an example on image restoration. In particular,
we show through simulations that the proposed framework is well suited to adopt a DDL approach for learning dictionaries $\Dica$ or~$\Dics$ (see Section~\ref{Sec:exp}). \\
A summary of the methods investigated in this work is provided in Table~\ref{tab:algo_sum}, and their associated theoretical conditions for convergence are summarized in Table~\ref{tab:algo_sum_cvg}.

\begin{table}
    \resizebox{\textwidth}{!}{
    \centering
    \renewcommand{\arraystretch}{2}
    \setlength{\tabcolsep}{3pt}
    \rowcolors{2}{DarkGray}{LightGray}
    \arrayrulecolor{white}
    \begin{tabular}{>{\centering\arraybackslash} m{3.9cm}|>{\centering\arraybackslash} m{2.3cm}|>{\centering\arraybackslash} m{6.6cm}|>{\centering\arraybackslash} m{3.3cm}|>{\centering\arraybackslash} m{7.2cm}|>{\centering\arraybackslash} m{1.7cm}}
        \rowcolor{DarkBlue}
        & \textcolor{white}{\textbf{Inverse problem }} 
        & \textcolor{white}{\textbf{Variational problem }}
        & \textcolor{white}{\textbf{Prox equivalence}}
        & \textcolor{white}{\textbf{Operator / Algorithm}}
        & \textcolor{white}{\textbf{Section}} 
        \\
        \midrule
        \textbf{Analysis Denoiser (AD)}
        &
            \[v = \overline{x} + \varepsilon_d w\]
        &
            \[
            \left\lbrace\hspace{-0.1cm}
            \begin{aligned}
                &\ x_{\Dica}^\dagger = v - \Dica^* u_{\Dica}^\dagger \\
                &\ u_{\Dica}^\dagger = \argmind{u \in \mathbb R^S} \frac12 \|\Dica^*u - v \|^2 + g^*_\lambda(u)
            \end{aligned}
            \right.
            \]
        &
            \[x_{\Dica}^\dagger = \prox_{g_\lambda \circ \Dica}(v)\]
        &
            \[ 
            \begin{aligned}
              \widetilde{\NN}_{L, \lambda, v}^{\Am} &  = \underset{\text{$L$ compositions}}{\underbrace{T_{\lambda, v}^{\Am} \circ \dots \circ T_{\lambda, v}^{\Am}}} \\
               T_{\lambda, v}^{\Am} \colon u \in \mathbb R^S \mapsto &  \prox_{ \sigma g_\lambda^*} \big( u - \sigma {\Dica}( \Dica^* u - v) \big)
            \end{aligned}
            \]
        &   \ref{Ssec:AD-struct}
        \\
        \hline
        \textbf{AD within FB-PnP}
        &
            \[ y = A\overline{x} + \varepsilon w \]
        &
            \[ \widehat{x}_{\Am} \in \Argmind{x \in \mathbb R^N} \frac12 \|Ax - y \|^2 + g_\lambda(\Dica x) \]
        &
            --
        &
            \vspace{-0.7cm}
            \[
            \begin{array}{l}
               \text{for } k = 0, 1, \ldots \\
               \left\lfloor
               \begin{aligned}
                    &\ v_{k}  =  x_k - \tau A^*(A x_k - y) , \\
                    &\ u_{k+1}  = \widetilde{\NN}^{\Am}_{L, \tau \lambda, v_k}(u_k), \\
                    &\ x_{k+1}  = v_k - \Dica^* u_{k+1},
                \end{aligned}
                   \right.
            \end{array}
            \]  
        &   \ref{Ssec:FB-PnP-AD}
        \\
        \midrule
        \textbf{Synthesis Denoiser (SD)}
        &
            \[ v = \overline{x} + \varepsilon_d w \]
        &
            \[ \left\lbrace\hspace{-0.1cm}
            \begin{aligned}
                &\ x_{\Dics}^\dagger = \Dics z_{\Dics}^\dagger \\
                &\ z_{\Dics}^\dagger = \argmind{z \in \mathbb R^S} \frac12 \|\Dics z - v \|^2 + g_\lambda(u)
            \end{aligned}\right.
            \]
        &
            \[x_{\Dics}^\dagger = \prox_{\Dics \rhd g_\lambda }(v) \]
        &
            \[ 
            \begin{aligned}
            \widetilde{\NN}_{L, \lambda, v}^{\Sm} & = \underset{\text{$L$ compositions}}{\underbrace{T_{\lambda, v}^{\Sm} \circ \dots \circ T_{\lambda, v}^{\Sm}}} \\
            T_{\lambda, v}^{\Sm} \colon u \in \mathbb R^S \mapsto & \prox_{ \zeta g_\lambda} \big( z - \zeta {\Dics^*}( \Dics z - v) \big)
            \end{aligned}
            \]
        &   \ref{Ssec:SF-equiv} \& \ref{Ssec:SD-struct}
        \\
        \hline
        \textbf{SD within FB-PnP}
        &
            \[ y = A\overline{x} + \varepsilon w \]
        &
            \[ \widehat{x}_{\Sm} \in \Argmind{x \in \mathbb R^N} \frac12 \|Ax - y \|^2 + \Dics \rhd g_\lambda( x)\] 
            \linebreak
            or 
            \linebreak
            \[
            \left\lbrace\hspace{-0.1cm}
            \begin{aligned}
            &\ \widehat{x}_{\Sm} = \Dics \widehat{z}_{\Sm} \\
            &\ \widehat{z}_{\Sm} \in \Argmind{z \in \mathbb R^S} \frac12 \|A \Dics z - y \|^2 + g_\lambda( z)
            \end{aligned}\right.
            \]
        &
            --
        & 
            \vspace{-0.7cm}
            \[ 
            \begin{array}{l}
               \text{for } k = 0, 1, \ldots \\
               \left\lfloor
               \begin{aligned}
                    &\ v_{k} =  x_k - \tau A^*(A x_k - y) , \\
                    &\ z_{k+1} = \widetilde{\NN}^{\Sm}_{L, \tau \lambda, v_k}(z_k), \\
                    &\ x_{k+1} = \Dics z_{k+1},
               \end{aligned}
               \right.
            \end{array}
            \]
        &   \ref{Ssec:SD-FBPnP}
        \\
        \midrule
        \textbf{Unfolded denoiser within 
FB-PnP}
        &
            \[ y = A\overline{x} + \varepsilon w \]
        &
            \[ x^\ddagger = \argmind{x \in \mathbb{R}^{\widetilde{N}}} 
    \widetilde{f}(x) + {^\mu\widetilde{g}_\lambda}(x) \]
        &
            --
        &
            \vspace{-0.7cm}
            \[ 
            \begin{array}{l}
            \text{for } k = 0, 1, \ldots \\
            \left\lfloor
            \begin{aligned}
              &\ x_{k+1}  =  x_k - \tau \nabla \widetilde{f}( x_k) - \tau\mu (x_k - u_k) , \\
              &\ u_{k+1} = \widetilde{\NN}_{L, \lambda\mu^{-1}} (x_{k+1}, u_k),
            \end{aligned}
            \right.
            \end{array}
            \] 
            \linebreak
            $\widetilde{\NN}_{L, \lambda\mu^{-1}}(x_{k+1}, u_k) \approx
            \prox_{\widetilde{g}_{\frac{\lambda}{\mu}}}(x_{k+1})$
            \vspace{0.3cm}
        &   \ref{sec:bi-level-approximate}
        \\
    \end{tabular}
    }
    \caption{\label{tab:algo_sum}
    Summary of the different problems and associated algorithms investigated in this work. 
    Notation are the same as those used throughout the article. 
    Conditions for convergence of the FB-PnP iterations are summarized in Table~\ref{tab:algo_sum_cvg}.
    }
\end{table}

\begin{table}[ht]
    \centering

    \rowcolors{2}{DarkGray}{LightGray}
    \arrayrulecolor{white}
    {\footnotesize 
    \begin{tabular}{>{\centering\arraybackslash}m{0.23\textwidth} | >{\centering\arraybackslash}m{0.43\textwidth} | 
                    >{\centering\arraybackslash}m{0.24\textwidth}}
 

    \rowcolor{DarkBlue}
    \textcolor{white}{\rule{0pt}{13pt}\textbf{}} &
    \textcolor{white}{\textbf{Convergence conditions}} &
    \textcolor{white}{\textbf{Convergence}} 
    \\
    \midrule
    \textbf{
    Analysis denoiser (AD) within FB-PnP} &
    \phantom{pour centrer}
    \begin{itemize}
      \item $L \to \infty$ or $L = 1$ 
      \item $g$ convex, lsc, proper
      \item $0 < {\tau} < 2 \|A\|_S^{-2}$
      \item $0< {\sigma} < 2 \|\Gamma\|_S^{-2}$
    \end{itemize}
    \phantom{verticalement}
    & 
    Theorem~\ref{thm:cvgce-pnp-limit-AD} ($L\to\infty$) and Theorem~\ref{prop:analysis_warm_restart}$^{\ast}$ ($L=1$)
    \\
    \hline
    \textbf{
    Synthesis denoiser (SD) within FB-PnP}
    &
    \phantom{pour centrer}
    \begin{itemize}
        \item $L \to \infty$
        \item $g$ convex, lsc, proper
        \item $0< {\tau} < 2 \|A\|_S^{-2}$
        \item $0 < {\zeta} < 2 \|D\|_S^{-2}$
    \end{itemize}
    \phantom{verticalement}
    & 
    Theorem~\ref{thm:cvgce-pnp-limit-SD} 
    \\
    \hline
    \textbf{
    Synthesis denoiser (SD) within FB-PnP}
    &
    \phantom{pour centrer}
    \begin{itemize}
        \item $L = 1$
        \item $g$ convex, lsc, proper
        \item $0< \tau\zeta < 2 \|AD\|_S^{-2}$
    \end{itemize}
    \phantom{verticalement}
    & 
    Theorem~\ref{thm:cvg-SF-L1}$^{\ast}$
    \\
    \hline
    \textbf{
    Unfolded denoiser within FB-PnP}
    &
    \phantom{pour centrer}
    \begin{itemize}
      \item $L$ chosen such that \eqref{thm:bilev:ass} is satisfied
      \item $\tilde{g}$ convex, lsc, proper
      \item $\tilde{f}$ convex, Lipschitz-differentiable
      \item $\tau$ chosen according to Theorem~\ref{thm:bilevel}\ref{thm:bilevel:cond:ii}
    \end{itemize}
    \phantom{verticalement}
    & 
    Theorem~\ref{thm:bilevel}$^{\ast}$
    \\
    \end{tabular}
    }
    \caption{\label{tab:algo_sum_cvg}
    Summary of the theoretical conditions necessary to ensure convergence of the algorithms investigated in this work, and summarized in Table~\ref{tab:algo_sum}. The symbol $^{\ast}$ is used to highlight new results.}
\end{table}

\smallskip

\subsection*{Outline}

The remainder of the paper is organized as follows. 
Section~\ref{Sec:AD-PnP} is dedicated to the analysis-based approach. We first present the denoiser based on the sparse coding
of the analysis coefficient using a given dictionary. 
Then, the unrolled version of this denoiser is embedded in a PnP framework to solve general linear inverse problems for which
we provide a convergence analysis. Section~\ref{Sec:SD-PnP} gives a similar study of the synthesis-based approach.
In Section~\ref{sec:bi-level-approximate}, we use a smooth regularized version of the original problem, to study the PnP algorithm with denoisers built with a finite number of iterations.
Finally, Section~\ref{Sec:exp} illustrates the convergence properties of the two
PnP approaches on a toy compressive sensing example, as well as to an image restoration problem within a DDL framework.

\subsection*{Notation}
An element of $\Rset^N$ is denoted by ${x = {(x^{(n)})}_{1 \le n \le N}}$.
The standard Euclidean norm is denoted $\|\cdot\|$, the $\ell_p$ norm (for $p>0$) is denoted $\| \cdot \|_p$, and $\| \cdot \|_S$ denotes the spectral norm.
We denote $\Gamma_0(\Rset^N)$ the set of proper, lower semi-continuous convex functions from $\Rset^N$ to $(-\infty,
+\infty]$. 
We use $\mathbb{N}^* := \mathbb{N} \setminus \{0\}$ to denote the set of strictly positive integers, and $\Rset^* := \mathbb{R} \setminus \{0\}$ for the set of nonzero real numbers.
For a function $f\in \Gamma_0(\Rset^N)$, we write $\displaystyle \min_{x \in \Rset^N} f(x)$ to denote the minimum value of $f$, and $\argmind{x \in \Rset^N} f(x)$ for a unique minimizer, when it exists. When the minimizer is not unique, we use $\Argmind{x \in \Rset^N} f(x)$ to denote the set of minimizers.
Further, the notation $\minimize{x \in \Rset^N} f(x)$ refers to the task of finding an element in $\Argmind{x \in \Rset^N} f(x)$.

The proximity operator of a function $g\in \Gamma_0(\Rset^N)$ is defined as~\cite{moreau1965proximite}
\begin{equation}    \label{def:prox}
    (\forall v \in \mathbb{R}^N)\quad
    \prox_{ g}(v) = \argmind{x \in \mathbb{R}^N}  g(x) + \frac12 \| x - v \|^2\ .
\end{equation}
Let $\mathcal{C} \subset \Rset^N$ be a closed, non-empty, convex set. We denote by $\iota_\mathcal{C}$ the indicator
function of $C$ defined, for every $x \in \Rset^N$, as $\iota_\mathcal{C}(x)=0$ if $x \in \mathcal{C}$, and
$\iota_\mathcal{C}(x)=+\infty$ otherwise. 
The Fenchel-Legendre conjugate function of $g$ is denoted by $g^* \in \Gamma_0(\Rset^N)$
and is defined as, for every $v\in \Rset^N$, $\displaystyle g^*(v) = \sup_{x \in \Rset^N}  \langle v, x \rangle - g(x)
$. 
The Moreau's identity is given by ${\Idt \mkern-3mu  = \mkern-3mu\prox_g\mkern-3mu +\mkern-3mu \prox_{g^*}}$, where $\Id$ denotes the identity operator. 
The infimal post-composition of $g$ by an operator $U\colon \Rset^S \to \Rset^N$ is defined as 
\begin{equation}\label{def:inf-post-comp}
    \displaystyle U \rhd g \colon  \Rset^N  \to [-\infty, +\infty] 
    \colon  x  \mapsto \inf_{z \in \Rset^S, x = Uz} g(z) .
\end{equation} 
In particular, by definition of the $\inf$ operator, if, for $x \in \Rset^N$, the set $\{z \in \Rset^S | x = Uz \}$ is empty, then $U\rhd g = +\infty$.
An operator $G\colon \Rset^N \to \Rset^N$ is $\beta$-Lipschitz continuous with parameter $\beta>0$ if, for
every $(x,y) \in {(\Rset^N)}^2$, $\|G (x) - G (y) \| \le \beta \| x - y \|$. 

For further background on convex optimization, we refer the reader to~\cite{bauschke2011convex, rockafellar1997convex, rockafellar2009variational} and references therein.

\section{Analysis denoiser for FB-PnP algorithm}\label{Sec:AD-PnP}
In this section we focus on the analysis denoiser defined in~\eqref{pb:AF-intro}, for a fixed dictionary $\Dica$.
By definition, the proximity operator of $g_\lambda \circ \Dica$ can be interpreted as a MAP estimate for a Gaussian denoising problem,
where the least-squares function corresponds to the data-fidelity term, and $g_\lambda \circ \Dica$ is the penalization function.
During the last decade, this interpretation has been leveraged to develop new hybrid optimization algorithms, dubbed PnP methods, where the proximity operator can be replaced by more powerful denoisers~\cite{brifman2016turning, ryu2019plug} to solve inverse problems of the form of~\eqref{pb:invpb}.
For instance, the PnP formulation of the FB algorithm~\eqref{algo:FB}, called FB-PnP hereafter, is given by~\eqref{algo:FB-PnP}.
More generally, PnP algorithms have been proposed in the literature using a wide range of proximal algorithms, including HQS algorithm, Douglas-Rachford algorithm, or primal-dual algorithms.
Denoisers can be either hand-crafted (e.g. BM3D~\cite{dabov2009bm3d}), or learned (e.g. neural network denoisers~\cite{chan2016plug, romano2017little, rick2017one}). 
Although PnP methods have shown outstanding performances in many applications, they can be unstable in practice.
The main challenge is to guarantee that PnP iterations produce a converging sequence of estimates ${(x_k)}_{k \in \mathbb{N}}$, without sacrificing reconstruction performances. 
In particular, it is well known that PnP algorithms output converging sequences ${(x_k)}_{k \in \mathbb{N}}$ if the denoiser $\NN$ is firmly non expansive, as a consequence of fixed point theory~\cite{bauschke2011convex}.
Recently, they have been extensively studied, in particular when denoisers are neural networks~\cite{pesquet2021learning,
terris2021enhanced, hurault2021gradient, hurault2022proximal, kamilov2023plug, repetti2022dual, bredies2024learning}. 

\smallskip

In this section, we focus on FB-PnP defined in~\eqref{algo:FB-PnP}, where the denoiser is built with sub-iterations based on the FB scheme for approximating~\eqref{pb:AF-intro}, for a fixed dictionary $\Dica$. In the following, we call such a denoiser an analysis denoiser (AD).

We use the subscript $\Dica$ in $x^\dagger_\Dica$ and $u^\dagger_\Dica$ to denote outputs obtained from denoising tasks using an analysis operator
$\Dica$, in the primal and dual domains respectively. Later, in~Section~\ref{Ssec:FB-PnP-AD}, the subscript $\Am$ will be used to refer to the solution of the full inverse problem using an AD built from $\Dica$. This convention highlights the distinction between the denoising phase and the inverse problem phase.

\subsection{Analysis denoiser structure}\label{Ssec:AD-struct}
As mentioned in Section~\ref{Sec:intro}, unless $\Dica$ is (semi)-orthogonal, the AD problem~\eqref{pb:AF-intro} does not have an explicit solution. However, it can be solved by the FB algorithm when applied to the dual problem of~\eqref{pb:AF-intro} (in the sense of Fenchel-Rockafellar, see e.g.~\cite[Chap.~15]{bauschke2011convex}), as proposed in~\cite{combettes2010dualization, combettes2011proximity}.
This problem aims to
\begin{equation}\label{pb:AF-intro-dual}
    \text{find } u^\dagger_{\Dica} 
    {\in \Argmind{u \in \Rset^S}} 
        \frac12 \| \Dica^* u -v \|^2 + g_\lambda^*(u).
\end{equation}
The resulting dual-FB algorithm reads 
\begin{equation}    \label{algo:dualFB-AF}
\begin{array}{l}
\text{for } \ell = 0, 1, \ldots \\
    \left\lfloor
    \begin{array}{l}
        u_{\ell+1} =
        \prox_{ \sigma g_\lambda^*} \big( u_\ell - \sigma {\Dica}(   \Dica^* u_\ell - v) \big)
    \end{array}
    \right.
\end{array}
\end{equation}
where $u_0\in \Rset^S$ and $\sigma>0$ is a step-size. Then the following convergence result can be deduced
from~\cite[Thm.~3.7]{combettes2010dualization}.

\begin{theorem}\label{thm:cv:dualFB}
\sloppy
Assume that $0< \sigma < 2 \| \Dica \|_S^{-2}$.
Then ${(u_\ell)}_{\ell \in \Nset}$ converges to a solution $u^\dagger_{\Dica}$ to the dual problem \eqref{pb:AF-intro-dual} of~\eqref{pb:AF-intro}, and $x^\dagger_{\Dica} = v - \Dica^* u^\dagger_{\Dica}$ is a solution to~\eqref{pb:AF-intro}.
\end{theorem}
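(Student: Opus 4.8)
The plan is to read off~\eqref{algo:dualFB-AF} as the forward--backward iteration applied to the Fenchel--Rockafellar dual~\eqref{pb:AF-intro-dual} of~\eqref{pb:AF-intro}, and then to invoke the classical convergence theory for that scheme together with the primal--dual recovery formula; this is exactly the content of~\cite[Thm.~3.7]{combettes2010dualization}, so the work reduces to checking that its hypotheses are met here.

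First I would exhibit the splitting of the dual objective as $h + g_\lambda^*$, where $h\colon u \mapsto \tfrac12\|\Dica^* u - v\|^2$ is convex and differentiable with $\nabla h(u) = \Dica(\Dica^* u - v)$, and $\nabla h$ is Lipschitz continuous with constant $\|\Dica\Dica^*\|_S = \|\Dica\|_S^2$; moreover $g_\lambda^* \in \Gamma_0(\Rset^S)$ since $g_\lambda = \lambda g \in \Gamma_0(\Rset^S)$. Since $\prox_{\sigma g_\lambda^*}$ is the backward step for $g_\lambda^*$ and $u \mapsto u - \sigma \nabla h(u)$ is the forward step for $h$, iteration~\eqref{algo:dualFB-AF} is precisely the FB scheme for $\minimize{u\in\Rset^S} h(u) + g_\lambda^*(u)$.

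Next I would verify existence and the qualification condition. The primal objective $x \mapsto \tfrac12\|x-v\|^2 + g_\lambda(\Dica x)$ lies in $\Gamma_0(\Rset^N)$ and is coercive (indeed strongly convex), so~\eqref{pb:AF-intro} has a unique solution; and since $\operatorname{dom}(\tfrac12\|\cdot - v\|^2) = \Rset^N$, the constraint qualification required for Fenchel--Rockafellar duality holds, hence strong duality holds and the dual~\eqref{pb:AF-intro-dual} has a nonempty solution set. With this, and since $0 < \sigma < 2\|\Dica\|_S^{-2}$ lies below $2$ over the Lipschitz constant of $\nabla h$, the FB convergence theorem (see~\cite{Combettes2005} and its dualized form~\cite[Thm.~3.7]{combettes2010dualization}) yields that ${(u_\ell)}_{\ell\in\Nset}$ converges to some solution $u^\dagger_{\Dica}$ of~\eqref{pb:AF-intro-dual}. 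For the primal claim I would use the Fenchel--Rockafellar recovery: with $f := \tfrac12\|\cdot - v\|^2$ one has $f^* \colon y \mapsto \tfrac12\|y\|^2 + \langle v, y\rangle$ and $\nabla f^*(y) = y + v$, so the associated primal solution is $\nabla f^*(-\Dica^* u^\dagger_{\Dica}) = v - \Dica^* u^\dagger_{\Dica} = x^\dagger_{\Dica}$. Equivalently, the dual optimality condition $-\nabla h(u^\dagger_\Dica) \in \partial g_\lambda^*(u^\dagger_\Dica)$ reads $\Dica x^\dagger_\Dica \in \partial g_\lambda^*(u^\dagger_\Dica)$, i.e. $u^\dagger_\Dica \in \partial g_\lambda(\Dica x^\dagger_\Dica)$, whence $0 \in x^\dagger_\Dica - v + \Dica^*\partial g_\lambda(\Dica x^\dagger_\Dica)$, the optimality condition for~\eqref{pb:AF-intro}.

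I do not expect a genuine obstacle: the statement is a direct specialization of~\cite[Thm.~3.7]{combettes2010dualization}. The only points needing attention are (i) confirming the qualification condition ensuring strong duality and dual attainment, which is immediate because the quadratic term has full domain, and (ii) the subdifferential chain rule $\partial(g_\lambda\circ\Dica) = \Dica^*\circ(\partial g_\lambda)\circ\Dica$ invoked in the alternative verification of primal optimality, which likewise holds here without additional assumptions.
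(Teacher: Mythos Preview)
Your proposal is correct and follows exactly the approach the paper takes: the paper does not give an explicit proof but simply states that the result ``can be deduced from~\cite[Thm.~3.7]{combettes2010dualization},'' and your write-up is precisely the verification of the hypotheses of that theorem (smoothness and Lipschitz constant of the dual quadratic, $g_\lambda^*\in\Gamma_0$, qualification via full domain of the primal quadratic, step-size condition) together with the standard primal recovery formula $x^\dagger_\Dica = v - \Dica^* u^\dagger_\Dica$.
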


Algorithm~\eqref{algo:dualFB-AF} can be used as sub-iterations when included within the global FB algorithm~\eqref{algo:FB-PnP}, to approximate the computation of $\prox_{g_\lambda \circ \Dica}$.
However, for~\eqref{algo:FB-PnP} to still benefit from convergence guarantees, the computation of the proximity operator must be accurate, which can be difficult in practice as this is only achieved at asymptotic convergence of~\eqref{algo:dualFB-AF}. 
In this section, we investigate the behavior of~\eqref{algo:FB-PnP} with the AD, when only one iteration of algorithm~\eqref{algo:dualFB-AF} is computed. 
In the remainder, we use the superscript $\Am$ to identify operators associated with the analysis formulation.

\begin{model}[Analysis denoiser (AD)]\label{mod:AF}
    Let $(v, u_0) \in \Rset^N \times \Rset^S$, $\Dica \colon \Rset^N \to \Rset^S$ be a linear operator, and $\lambda>0$ be a regularization parameter. Let $L \in \Nset^*$.
    Let $\widetilde{\NN}^{\Am}_{L,\lambda, v} \colon \Rset^S \to \Rset^S$ be defined as
    \begin{equation}    \label{DLnet:ana-dualFB-dual}
    \widetilde{\NN}^{\Am}_{L,\lambda, v}(u_0) = \underset{\text{$L$ compositions}}{\underbrace{\feedAF_{\lambda, v} \circ \dots \circ \feedAF_{\lambda, v}}}(u_0)	,
    \end{equation}
    where
    \begin{equation} \label{DLnet:ana-dualFB-layer}
    \feedAF_{\lambda, v} \colon \Rset^S \to \Rset^S \colon u \mapsto 
    {\prox_{ \sigma g_\lambda^*} \big( u - \sigma {\Dica}^*( \Dica u - v) \big)}
    \end{equation}
    with $\sigma>0$.
    The unrolled AD $\NN^{\Am}_{L,\lambda, v} \colon  \Rset^S \to \Rset^N$ is obtained as follows:
    \begin{equation}    \label{DLnet:ana-dualFB}
    \NN^{\Am}_{L,\lambda, v}( u_0) 
    = v - {\Dica^*} \widetilde{\NN}^{\Am}_{L,\lambda, v}(u_0).
    \end{equation}
\end{model}

A few comments can be made regarding Model~\ref{mod:AF}.
\begin{remark}\
\begin{enumerate}
\item
In~\cite{repetti2022dual, le2023pnn}, the authors have used the unrolled Model~\ref{mod:AF} to design denoising NNs, where the dictionary $\Dica$ is learned and is different at each iteration.  In their work, a slightly different structure of operator $\feedAF_{\lambda, v}$ was used, accounting for a simple convex constraint on the signal domain, and rescaling the dual variable.
\item
It can be noticed that, since the layers of Model~\ref{mod:AF}, defined in~\eqref{DLnet:ana-dualFB-layer}, correspond to FB iterations, it can be formulated as a feed-forward network, acting in the dual domain.  Precisely,  $ \feedAF_{\lambda, v}$ can be reformulated as
\begin{equation*}
(\forall u \in \Rset^S) \quad
 \feedAF_{\lambda, v}(u) =
 \prox_{ \sigma g_\lambda^*} \left( (\Id -  \sigma {\Dica \Dica^*}) u + \sigma {\Dica} v \right),
\end{equation*}
where $\Id -  \sigma {\Dica \Dica^*}$ is a linear operator and $\sigma {\Dica} v$ a bias.
In~\cite{le2023pnn}, the authors describe the layers of Model~\ref{mod:AF} using a primal-dual formulation, where each layer is composed of two feed-forward layers, one acting in the dual domain, and one acting in the primal domain.
\item
As a direct consequence of Theorem~\ref{thm:cv:dualFB}, it can be noticed that, for any $u_0\in \Rset^S$,  if  $0 < \sigma < 2 \|\Dica\|_S^{-2}$, then $\displaystyle \lim_{L \to +\infty} \NN^{\Am}_{L, \lambda, v}(u_0) = x_{\Dica}^\dagger$,
where $x_{\Dica}^\dagger$ is the solution to Problem~\eqref{pb:AF-intro}. 
\end{enumerate}
\end{remark}

\subsection{FB-PnP algorithm with approximated AD}\label{Ssec:FB-PnP-AD}
In this section we aim to  
\begin{equation}\label{pb:inv-min-AF}
    \text{find } \widehat{x}_{\Am} \in \Argmind{x \in \Rset^N} \frac12 \| Ax - y \|^2 + g_\lambda(\Dica x).
\end{equation}
We study the asymptotic behavior of the FB-PnP algorithm~\eqref{algo:FB-PnP} with the AD given in Model~\ref{mod:AF}.
This algorithm boils down to
\begin{equation}    \label{algo:FB-pnp-AD}
    \begin{array}{l}
        x_0\in \Rset^N, u_0 \in \Rset^S,\\
       \text{for } k = 0, 1, \ldots \\
       \left\lfloor
       \begin{array}{l}
        {v}_{k} =  x_k - \tau A^*(A x_k - y) , \\
        u_{k+1} = \widetilde{\NN}^{\Am}_{L, \tau \lambda, v_k}(u_k), \\
        x_{k+1} = v_k - \Dica^* u_{k+1}, 
       \end{array}
       \right.
    \end{array}
\end{equation}
where $\tau>0$ is the stepsize of the FB algorithm and $L\in \Nset^*$ is the number of iterations of the AD in Model~\ref{mod:AF}.

The following convergence result directly follows from Theorem~\ref{thm:cv:dualFB} and~\cite[Thm.~3.4]{Combettes2005}.
\begin{theorem}\label{thm:cvgce-pnp-limit-AD}
    Let ${(x_k)}_{k\in \Nset}$ be generated by algorithm~\eqref{algo:FB-pnp-AD}. Assume that $0 < \tau < 2 \| A\|_S^{-2} $ and $0 < \sigma < 2 \|\Dica\|_S^{-2}$. 
    When $L\to \infty$ in Model~\ref{mod:AF} (i.e., when the proximity operator in~\eqref{pb:AF-intro} is computed accurately) 
    then ${(x_k)}_{k \in \mathbb{N}}$ converges to a solution to problem~\eqref{pb:inv-min-AF}.
\end{theorem}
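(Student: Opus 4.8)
The plan is to show that, once the inner proximity operator is resolved exactly (the regime $L\to\infty$), the recursion~\eqref{algo:FB-pnp-AD} reduces to the plain forward--backward algorithm~\eqref{algo:FB} applied to problem~\eqref{pb:inv-min-AF}, and then to invoke the classical convergence theorem for that scheme, namely~\cite[Thm.~3.4]{Combettes2005}.

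First I would fix an outer index $k$ and examine the inner update $u_{k+1} = \widetilde{\NN}^{\Am}_{L,\tau \lambda, v_k}(u_k)$, which consists of $L$ iterations of the dual-FB scheme~\eqref{algo:dualFB-AF} run on the denoising problem~\eqref{pb:AF-intro} with data $v_k$ and regularization $g_{\tau\lambda} = \tau\lambda g$, started from the warm restart $u_k$. Since $0<\sigma<2\|\Dica\|_S^{-2}$, Theorem~\ref{thm:cv:dualFB} (applied with $g_\lambda$ replaced by $g_{\tau\lambda}$) guarantees that this inner sequence converges, as $L\to\infty$, to some dual solution $u^\dagger$, and that $v_k - \Dica^* u^\dagger = \prox_{(\tau\lambda g)\circ\Dica}(v_k) = \prox_{\tau(g_\lambda\circ\Dica)}(v_k)$; by single-valuedness of the proximity operator this limit depends neither on the warm restart $u_k$ nor on which dual solution is attained. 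Recalling that $x_{k+1} = v_k - \Dica^* u_{k+1}$ and $v_k = x_k - \tau A^*(Ax_k - y)$, passing to the limit $L\to\infty$ turns~\eqref{algo:FB-pnp-AD} into
\[
x_{k+1} = \prox_{\tau(g_\lambda\circ\Dica)}\big( x_k - \tau A^*(A x_k - y)\big),
\]
which is exactly~\eqref{algo:FB} with the penalization $p = g_\lambda\circ\Dica$.

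Second I would verify the hypotheses of~\cite[Thm.~3.4]{Combettes2005} for $f = \tfrac12\|A\cdot - y\|^2$ and $p = g_\lambda\circ\Dica$: the function $f$ is convex and differentiable with $\nabla f = A^*(A\cdot - y)$ Lipschitz continuous of constant $\|A^*A\|_S = \|A\|_S^2$; the function $p$ belongs to $\Gamma_0(\Rset^N)$ since $g\in\Gamma_0(\Rset^S)$ (in fact finite-valued), $\Dica$ is linear and $\lambda>0$, so $g_\lambda\circ\Dica$ is proper, lsc and convex; the step size obeys $\tau \in (0, 2\|A\|_S^{-2}) = (0, 2/\|A^*A\|_S)$, precisely the admissible range; and, as implicitly required by that theorem, the solution set of~\eqref{pb:inv-min-AF} is nonempty. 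The cited theorem then yields convergence of $(x_k)_{k\in\Nset}$ to a minimizer of $\tfrac12\|A x - y\|^2 + g_\lambda(\Dica x)$, i.e. to a solution of~\eqref{pb:inv-min-AF}.

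The only point needing a little care is the first step: one must justify that letting $L\to\infty$ commutes with the outer loop, i.e. that the ``exact-prox'' recursion is the genuine pointwise limit of the finite-$L$ recursions along any trajectory. This is exactly what Theorem~\ref{thm:cv:dualFB} provides, the single-valuedness of $\prox_{\tau(g_\lambda\circ\Dica)}$ removing any dependence on the warm restart; once this identification is made, the statement is a direct application of the standard forward--backward convergence theorem, and no new estimate is needed.
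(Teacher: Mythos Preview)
Your proposal is correct and follows exactly the approach the paper indicates: the paper states that the result ``directly follows from Theorem~\ref{thm:cv:dualFB} and~\cite[Thm.~3.4]{Combettes2005}'' without giving further details, and you have simply unpacked that sentence by first using Theorem~\ref{thm:cv:dualFB} to identify the $L\to\infty$ inner step with $\prox_{\tau(g_\lambda\circ\Dica)}$ and then invoking the standard forward--backward convergence theorem.
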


Theorem~\ref{thm:cvgce-pnp-limit-AD} is purely theoretical as it only holds when $L \to \infty$, i.e.\ when $\widetilde{\NN}^{\Am}$ has a very large number of sub-iterations.
In practice, this is intractable and only a fixed finite number of iterations are used. For instance, in the context of DDL, typically $L$ will be smaller than $20$. 
We propose to investigate to what extent this impacts the convergence and the nature of the final solution.
Note that in algorithm~\eqref{algo:FB-pnp-AD}, the unrolled denoiser benefits from a warm restart using the output $u_k$ of the inner iterations $\widetilde{\NN}^\Am$ obtained at iteration $k$. We will take advantage of this warm restart to show the convergence of ${(x_k)}_{k\in \Nset}$ when $L=1$. 
In this particular case, at iteration $k\in \mathbb N$, the update of $u_k$ in algorithm~\eqref{algo:FB-pnp-AD} boils down to 
\begin{equation}\label{algo:FB-pnp-ana-1}
    u_{k+1} = \widetilde{\NN}^{\Am}_{1, \tau \lambda, v_k}(u_k) = \feedAF_{\tau \lambda, v_k}(u_k),
\end{equation}
where $\feedAF_{\tau \lambda, v_k}$ is defined in~\eqref{DLnet:ana-dualFB-layer}.

In the following result, we show that algorithm~\eqref{algo:FB-pnp-AD} with $L=1$ corresponds to the scaled primal-dual algorithm proposed by~\cite{loris2011generalization} (see Section~6 in this article, equation (40)). We can then deduce convergence guarantees for ${(x_k)}_{k\in \mathbb N}$, leveraging~\cite[Thm.~1]{loris2011generalization}.

\begin{theorem}\label{prop:analysis_warm_restart}
    Let ${(x_k)}_{k\in \Nset}$ and ${(u_k)}_{k\in \Nset}$ be sequences generated by algorithm~\eqref{algo:FB-pnp-AD} with $L=1$. Assume that $0< \tau < 2 \| A\|_S^{-2} $ and $0< \sigma < 2 \|\Dica\|_S^{-2}$.
    Then the following statements hold:
    \begin{enumerate}
        \item     
        ${(x_k)}_{k\in \Nset}$ converges to a solution to~\eqref{pb:inv-min-AF}.
        \item 
        ${(x_k, \tau^{-1} u_k)}_{k\in \Nset}$ converges to a solution to the saddle-point problem aiming to
        \begin{equation}\label{prop:analysis_warm_restart:SPP}
            \text{find } (\widehat{x}_{\Am}, \widehat{u}_{\Am}) \in
            \Argd \min_{x \in \Rset^N} \max_{u \in \Rset^S} \; \frac12 \| Ax - y \|^2 + \left\langle \Dica x , u \right\rangle - g_\lambda^*(u).
        \end{equation}
    \end{enumerate}
\end{theorem}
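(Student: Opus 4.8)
The plan is to show that algorithm~\eqref{algo:FB-pnp-AD} with $L=1$, written out explicitly, coincides term-for-term with the scaled primal-dual iteration of Loris--Verhoeven~\cite{loris2011generalization}, and then to invoke their convergence theorem. First I would expand~\eqref{algo:FB-pnp-AD} with $L=1$ using~\eqref{algo:FB-pnp-ana-1} and the definition~\eqref{DLnet:ana-dualFB-layer} of $\feedAF_{\tau\lambda, v_k}$. Substituting $v_k = x_k - \tau A^*(Ax_k - y)$ into the dual update gives
\begin{equation*}
u_{k+1} = \prox_{\sigma (\tau\lambda g)^*}\big( u_k - \sigma \Dica( \Dica^* u_k - x_k + \tau A^*(Ax_k - y)) \big),
\end{equation*}
and then $x_{k+1} = v_k - \Dica^* u_{k+1} = x_k - \tau A^*(Ax_k - y) - \Dica^* u_{k+1}$. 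The key observation is that this is exactly one step of the scheme that, at iteration $k$, performs a forward gradient step on $x\mapsto \tfrac12\|Ax-y\|^2$, then a dual prox-gradient step coupled through $\Dica$, then corrects the primal variable; this is precisely equation~(40) of~\cite{loris2011generalization} (their ``PDFP$^2$O''/Loris--Verhoeven scheme) applied to the pair of functions $f = \tfrac12\|A\cdot - y\|^2$, $h = g_\lambda$, and linear operator $\Dica$, after the rescaling $\bar u_k = \tau^{-1} u_k$ which absorbs the $\tau\lambda$ inside $g$ into a $\tau$ outside and matches the step-size conventions. I would carry out this rescaling carefully: writing $\bar u_k = \tau^{-1}u_k$ and using $\prox_{\sigma(\tau\lambda g)^*}(\tau w) = \tau\,\prox_{\sigma\tau\lambda g^*\!\circ(\tau\,\cdot)}(w)$-type identities (via $(\tau\lambda g)^* (u) = \tau\lambda\, g^*(u/(\tau\lambda))$ and the scaling rule for the prox), one checks the rescaled recursion is the standard form with parameters $\tau$ (primal/outer step) and $\sigma$ (dual/inner step).

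Once the iteration is identified, claim~(i) follows: \cite[Thm.~1]{loris2011generalization} guarantees that under $0 < \tau < 2\|A\|_S^{-2}$ and $0 < \sigma < 2\|\Dica\|_S^{-2}$ (their conditions on the gradient-Lipschitz constant of $f$, which is $\|A\|_S^2$, and on the spectral norm of the coupling operator $\Dica$), the primal sequence converges to a minimizer of $x \mapsto \tfrac12\|Ax-y\|^2 + g_\lambda(\Dica x)$, which is exactly~\eqref{pb:inv-min-AF}. For claim~(ii), I would recall that the Loris--Verhoeven algorithm is a primal-dual method whose full iterate sequence converges to a primal-dual (saddle) solution of the associated Lagrangian; the saddle function here is $\tfrac12\|Ax-y\|^2 + \langle \Dica x, u\rangle - g_\lambda^*(u)$, obtained by Fenchel-dualizing the $g_\lambda(\Dica x)$ term. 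Matching the rescaled dual variable $\bar u_k = \tau^{-1}u_k$ to the dual iterate of~\cite{loris2011generalization} then gives convergence of $(x_k, \tau^{-1}u_k)$ to a point $(\widehat x_{\Am}, \widehat u_{\Am})$ solving~\eqref{prop:analysis_warm_restart:SPP}; one also notes the primal component of any such saddle point is a solution of~\eqref{pb:inv-min-AF}, consistent with~(i).

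The main obstacle I anticipate is purely bookkeeping: getting the step-size dictionary right between the three scalars in play ($\tau$ from the outer FB, $\sigma$ from the inner dual-FB, and $\lambda$), and confirming that the warm-restart (reusing $u_k$ rather than reinitializing the dual variable) is what makes the single-sub-iteration scheme genuinely equal to Loris--Verhoeven rather than merely reminiscent of it — without the warm restart, the dual variable would be reset each outer iteration and the correspondence would break. I would therefore make the warm-restart role explicit in the identification. A secondary check is that~\cite{loris2011generalization} indeed states convergence of the dual iterate (not only the primal) to a dual optimal point; if their statement only gives primal convergence plus boundedness of the dual, I would supplement with a standard Fejér-monotonicity / fixed-point argument (the iteration operator is, after the rescaling, an averaged operator on the product space $\Rset^N\times\Rset^S$, so the Krasnosel'ski\u{\i}--Mann framework of~\cite{bauschke2011convex} yields convergence of the whole sequence to a fixed point, and fixed points are exactly the saddle points of~\eqref{prop:analysis_warm_restart:SPP}).
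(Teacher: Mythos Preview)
Your proposal is correct and follows essentially the same route as the paper: expand the $L=1$ iteration, rescale the dual variable via $\tilde u_k = \tau^{-1}u_k$ (using the prox-scaling identity for $(\tau g_\lambda)^*$), identify the resulting scheme with the scaled Loris--Verhoeven iteration~(40) of~\cite{loris2011generalization}, and invoke their Theorem~1. The paper carries out exactly this calculation and cites the same result; your anticipated ``bookkeeping'' obstacle is indeed the only work involved, and your observation that the warm restart is what makes the identification exact is precisely the point of the theorem.
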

\begin{proof}
    Let ${(x_k)}_{k\in \Nset}$ and ${(u_k)}_{k\in \Nset}$ be generated by algorithm~\eqref{algo:FB-pnp-AD} with $L=1$.
    We have
    \begin{equation*}
        x_{k+1}
            = v_k - \Dica^* u_{k+1}
            = x_k - \tau A^*(A x_k - y) - \Dica^* u_{k+1}
    \end{equation*}
    and
    \begin{align*}
        u_{k+1}
        &= \prox_{\sigma g_{\lambda \tau}^*} \Big( u_k - \sigma \Dica \Dica^* u_k + \sigma \Dica v_k \Big) \\
        &= \prox_{\sigma g_{\lambda \tau}^*} \Big( u_k - \sigma \Dica \Dica^* u_k + \sigma \Dica \big( x_k - \tau A^*(A x_k - y) \big) \Big)\\
        &= \prox_{\sigma g_{\lambda \tau}^*} \Big( u_k + \sigma \Dica \Big( x_k - \tau A^* \big( A x_k - y \big) - \Dica^* u_k \Big) \Big).
    \end{align*}
    Hence, algorithm~\eqref{algo:FB-pnp-AD}-\eqref{algo:FB-pnp-ana-1} is equivalent to
    \begin{equation}    \label{thm:equiv:proof:eq1}
    \begin{array}{l}
       \text{for } k = 0, 1, \ldots \\
       \left\lfloor
       \begin{array}{l}
            u_{k+1}
                = \prox_{\sigma g_{\lambda \tau}^*} \Big( u_k + \sigma \Dica \Big( x_k - \tau A^* \big( A x_k - y \big) - \Dica^* u_k \Big) \Big)\\
            x_{k+1}
                = x_k - \tau A^*(A x_k - y) - \Dica^* u_{k+1}.
       \end{array}
       \right.
    \end{array}
    \end{equation}
    Further, noticing that $g_{\lambda\tau} = \tau g_\lambda$ (see definition of $g_\lambda$ after \eqref{pb:AF-intro}), and applying subsequently Prop.~24.8(v) and Prop.~13.23(i) from~\cite{bauschke2011convex}, we have
    \begin{equation*}
        (\forall u \in \Rset^S)\quad
        \prox_{\sigma g_{\lambda \tau}^*}(u)
        = \prox_{\sigma \tau g_\lambda^*}(u)
        = \tau \prox_{\sigma \tau^{-1} g_\lambda^*}(\tau^{-1} u).
    \end{equation*}
    Applying this result to~\eqref{thm:equiv:proof:eq1} leads to
    \begin{equation}    \label{thm:equiv:proof:eq2}
    \begin{array}{l}
       \text{for } k = 0, 1, \ldots \\
       \left\lfloor
       \begin{array}{l}
            u_{k+1}
                = \tau \prox_{\sigma \tau^{-1} g_\lambda^*} \bigg( \tau^{-1} \Big( u_k + \sigma \Dica \Big( x_k - \tau A^* \big( A x_k - y \big) - \Dica^* u_k \Big) \Big) \bigg)\\
            x_{k+1}
                = x_k - \tau A^*(A x_k - y) - \Dica^* u_{k+1}.
       \end{array}
       \right.
    \end{array}
    \end{equation}
    Hence, by setting, for every $k\in \Nset$, $\widetilde{u}_k = \tau^{-1} u_k$, we obtain
    \begin{equation}    \label{algo:LV}
    \begin{array}{l}
       \text{for } k = 0, 1, \ldots \\
       \left\lfloor
       \begin{array}{l}
        \widetilde{u}_{k+1}
            = \prox_{\frac{\sigma}{\tau} g_\lambda^* } \Big( \widetilde{u}_k + \frac{\sigma}{\tau} \Dica \big( x_k - \tau A^*(Ax_k - y) - \tau \Dica^* \widetilde{u}_k \big) \Big)\\
        x_{k+1}
            = x_k - \tau A^*(A x_k - y) - \tau \Dica^* \widetilde{u}_{k+1},
       \end{array}
       \right.
    \end{array}
    \end{equation}
    which is equivalent to the 
    scaled Loris-Verhoeven primal-dual algorithm proposed by~\cite{loris2011generalization} (see Appendix~\ref{ASec:lvpda}). Hence \cite[Thm.~1]{loris2011generalization} can directly be applied to deduce the convergence results. 
\end{proof}

Theorem~\ref{prop:analysis_warm_restart} shows that using a single sub-iteration $L=1$ in algorithm~\eqref{algo:FB-pnp-AD} is sufficient to ensure the convergence toward a solution to the same problem as the one solved when using $L \to \infty$, provided that we use a warm-starting scheme.
It also informs us on the structure of the recovered solutions as the solution of a min-max problem.
Let us give a practical implication of (ii) in the case where the $\ell_1$ norm is used as a sparsity-inducing regularization.

\begin{example}
In the particular case  $g_\lambda = \lambda \|\cdot\|_1$, then Theorem~\ref{prop:analysis_warm_restart} shows that
\begin{equation}\label{eq:AD_l1}
        \min_{x \in \mathbb R^N} \frac12 \| Ax-y\|^2 + \lambda \| \Dica x \|_1=
        \min_{x \in \mathbb R^N} \max_{x_0 \in \mathcal{F}_{\lambda}^{\Am} (\Dica)} \frac12 \| Ax-y\|^2 + \langle x, x_0 \rangle,
\end{equation}
where  $\mathcal{F}_{\lambda}^{\Am} (\Dica) = \big \{ x \ | \ \inf \{ \| z \|_\infty \ | \ \Dica^* z = x \} \le \lambda \big \}$. 
This is true for any choice of $\Dica$, including wavelet transforms or the finite difference operator (so that $g_\lambda\circ\Dica$ corresponds to the anisotropic total variation).
\end{example}

\section{Synthesis denoiser for FB-PnP algorithm}\label{Sec:SD-PnP}
We now focus on the synthesis denoiser (SD) defined in~\eqref{pb:SF-intro} for a fixed dictionary $\Dics$. The objective will be to investigate the behavior of the FB-PnP algorithm~\eqref{algo:FB-PnP}, where the operator $G$ is such an SD. 
Before investigating suitable unrolling techniques to build an SD within the FB-PnP algorithm, we will show that~\eqref{pb:SF-intro} is a proximity operator of a convex function. We will further investigate the link between the associated minimization problem, and the problem that would be obtained if a full synthesis formulation of~\eqref{pb:min_gen}-\eqref{eq:def_p} was considered.
We will then propose an unrolled FB algorithm for solving~\eqref{pb:SF-intro}, and investigate the behavior of algorithm~\eqref{algo:FB-PnP} with this unrolled~SD.

In this section, we use the subscript $\Dics$ in $x^\dagger_\Dics$ and $z^\dagger_\Dics$ to refer to outputs of
denoising tasks involving a fixed synthesis dictionary $\Dics$, from which the SD is built. 
Similarly to Section~\ref{Sec:AD-PnP}, we further use the subscript $\Sm$ to denote solutions associated with the synthesis formulation of the full inverse problem, where 
$\Dics$ is used to reconstruct the signal from sparse codes.
This notational distinction aims to separate the denoising phase from the inverse problem phase.

\subsection{Synthesis problem formulations}\label{Ssec:SF-equiv}
In this section we first define the synthesis problem of interest, and investigate its properties.

\paragraph{Synthesis denoiser as proximity operator}
Unlike for the AD studied in Section~\ref{Sec:AD-PnP}, it is not obvious that the SD corresponds to a proximity operator. 
The first result of this section will show that~\eqref{pb:SF-intro} indeed is a proximity operator of a convex function, making it a suitable candidate for the FB-PnP algorithm~\eqref{algo:FB-PnP}.
For a fixed synthesis dictionary $\Dics$, we introduce the notation $(x^\dagger_{\Dics}, z^\dagger_{\Dics})$, that is a solution to the problem that aims to
\begin{equation}\label{pb:SF-inner}
    \text{find} \quad x^\dagger_{\Dics} = \Dics z^\dagger_{\Dics}
    \quad \text{where} \quad
    z^\dagger_{\Dics} =
    \argmind{z \in \Rset^S} \frac12 \| \Dics z-v \|^2 + g_\lambda(z).
\end{equation}
Problems~\eqref{pb:AF-intro} and~\eqref{pb:SF-inner} are equivalent when the dictionary is invertible,  which is generally not the case.
Detailed comparisons between analysis and synthesis problem formulations can be found in the literature, e.g., in~\cite{EMR07}.

The following result highlights that the synthesis denoising problem~\eqref{pb:SF-inner} corresponds to a proximity operator. This fact has also been noted in~\cite[Sect.~5.3]{chambolle2016introduction} and \cite[Prop.~1]{briceno2024infimal} to reformulate the ADMM and the Douglas-Rachford iterations, respectively.

\begin{proposition}\label{prop:synthesis_as_prox_first}
    Let $\Dics \colon \Rset^S \to \Rset^N$, $v \in \Rset^N$, $\lambda>0$, and let $g \in \Gamma_0(\Rset^N)$. Then, for $z^\dagger_{\Dics}$ defined as in~\eqref{pb:SF-inner}, we have
    \begin{align}
        x^\dagger_{\Dics} = \Dics z^\dagger_{\Dics}
        &=  \prox_{{\left(  g_\lambda^* \circ \Dics^* \right)}^*} (v),     \label{prop:SF_prox:eq} \\
        &=  \prox_{\Dics \rhd g_\lambda}(v),  \label{prop:SF_prox:eq_post}
    \end{align}
    where the infimal post-composition operator $\rhd$ is defined in~\eqref{def:inf-post-comp}.
\end{proposition}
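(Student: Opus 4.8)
The plan is to establish the two identities in turn. \textbf{Identity \eqref{prop:SF_prox:eq_post}} is the more elementary one, and I would obtain it by unfolding the two nested minimizations. By definition of the proximity operator and of the infimal post-composition,
\begin{equation*}
  \prox_{\Dics \rhd g_\lambda}(v)
  = \argmind{x \in \Rset^N} \Big(\tfrac12 \|x - v\|^2 + \inf_{z \in \Rset^S,\, x = \Dics z} g_\lambda(z)\Big)
  = \argmind{x \in \Rset^N}\ \inf_{z \in \Rset^S,\, x = \Dics z} \Big(\tfrac12\|\Dics z - v\|^2 + g_\lambda(z)\Big),
\end{equation*}
where in the second equality I used $x = \Dics z$ inside the quadratic term. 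The joint infimum over $(x,z)$ subject to the constraint $x = \Dics z$ collapses to an infimum over $z$ alone, which by hypothesis is attained at $z^\dagger_{\Dics}$, with corresponding primal point $\Dics z^\dagger_{\Dics}$. It then remains to note that the outer $\argmin$ is a singleton: $\Dics \rhd g_\lambda$ is convex (infimal post-composition of a convex function by a linear map), and $\tfrac12\|\cdot-v\|^2 + \Dics \rhd g_\lambda$ is bounded below by the finite value $\min_z \tfrac12\|\Dics z - v\|^2 + g_\lambda(z)$, hence proper and strictly convex, so it admits a unique minimizer, necessarily $\Dics z^\dagger_{\Dics} = x^\dagger_{\Dics}$.

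For \textbf{identity \eqref{prop:SF_prox:eq}} I would route through Moreau's identity applied to the genuinely $\Gamma_0$ function $h := g_\lambda^* \circ \Dics^*$, rather than through the biconjugate of $\Dics \rhd g_\lambda$. The function $h$ is convex and lower semicontinuous as a composition of a function in $\Gamma_0$ with a linear operator. Fermat's rule for \eqref{pb:SF-inner} gives $\Dics^*(v - \Dics z^\dagger_{\Dics}) \in \partial g_\lambda(z^\dagger_{\Dics})$, equivalently, by the Fenchel--Young identity, $z^\dagger_{\Dics} \in \partial g_\lambda^*\!\big(\Dics^*(v - \Dics z^\dagger_{\Dics})\big)$; in particular $\Dics^*(v - \Dics z^\dagger_{\Dics}) \in \operatorname{dom} g_\lambda^*$, so $h(v - \Dics z^\dagger_{\Dics}) < +\infty$ and $h \in \Gamma_0(\Rset^N)$. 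Moreau's identity then yields $\prox_{h^*}(v) = v - \prox_h(v)$, with $h^* = (g_\lambda^*\circ\Dics^*)^*$. To finish I would show $\prox_h(v) = v - \Dics z^\dagger_{\Dics}$, which by Fermat's rule amounts to checking $\Dics z^\dagger_{\Dics} \in \partial(g_\lambda^*\circ\Dics^*)(v - \Dics z^\dagger_{\Dics})$: this follows from the always-valid inclusion $\Dics\,\partial g_\lambda^*(\Dics^* x) \subseteq \partial(g_\lambda^*\circ\Dics^*)(x)$ combined with the membership $z^\dagger_{\Dics} \in \partial g_\lambda^*(\Dics^*(v - \Dics z^\dagger_{\Dics}))$ obtained above. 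Hence $\prox_{h^*}(v) = v - (v - \Dics z^\dagger_{\Dics}) = \Dics z^\dagger_{\Dics} = x^\dagger_{\Dics}$. (Alternatively one computes directly $(\Dics \rhd g_\lambda)^*(y) = \sup_z(\langle \Dics^* y, z\rangle - g_\lambda(z)) = g_\lambda^*(\Dics^* y)$, so that $(g_\lambda^*\circ\Dics^*)^* = (\Dics\rhd g_\lambda)^{**}$, and invokes that adding the strictly convex quadratic $\tfrac12\|\cdot-v\|^2$ to $\Dics\rhd g_\lambda$ or to its closed convex hull produces the same minimizer; I slightly prefer the Moreau route since it avoids that last regularization argument.)

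The \emph{main obstacle} is purely regularity bookkeeping: $\Dics \rhd g_\lambda$ need not be lower semicontinuous, so one cannot simply assert $\Dics\rhd g_\lambda = (g_\lambda^*\circ\Dics^*)^*$, and one must ensure that every $\argmin$ appearing in the argument is a nonempty singleton and that the functions involved are proper. The key observation that makes all of this harmless is that the \emph{assumed} existence of $z^\dagger_{\Dics}$ is exactly what is needed — through Fermat's rule and Fenchel--Young duality — to guarantee properness of $h = g_\lambda^*\circ\Dics^*$ and boundedness below of $\tfrac12\|\cdot-v\|^2 + \Dics\rhd g_\lambda$, after which convexity and strict convexity of the quadratic do the rest.
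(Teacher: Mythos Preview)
Your argument is correct; both identities are established rigorously, and the regularity bookkeeping (properness of $h=g_\lambda^*\circ\Dics^*$, possible failure of lower semicontinuity of $\Dics \rhd g_\lambda$) is handled with care.

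The route differs from the paper's in order and in tools. The paper first proves \eqref{prop:SF_prox:eq} via Fenchel--Rockafellar duality: it identifies the dual of \eqref{pb:SF-inner} as $u^\dagger_{\Dics} = \prox_{g_\lambda^* \circ \Dics^*}(v)$, invokes the primal--dual relation $\Dics z^\dagger_{\Dics} = v - u^\dagger_{\Dics}$ from \cite[Thm.~19.1]{bauschke2011convex}, and applies Moreau's identity. It then deduces \eqref{prop:SF_prox:eq_post} from \eqref{prop:SF_prox:eq} by citing the conjugate identity $(g_\lambda^* \circ \Dics^*)^* = \Dics \rhd g_\lambda$. You reverse this: you obtain \eqref{prop:SF_prox:eq_post} directly by unfolding the nested minimizations, and prove \eqref{prop:SF_prox:eq} independently via Fermat's rule and the always-valid chain-rule inclusion $\Dics\,\partial g_\lambda^*(\Dics^*\,\cdot) \subseteq \partial(g_\lambda^*\circ\Dics^*)$. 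Your path is slightly longer but more self-contained, and it sidesteps the question of whether $\Dics \rhd g_\lambda$ is lower semicontinuous (needed for the cited conjugate identity to hold as an equality of functions rather than merely $(g_\lambda^*\circ\Dics^*)^* = (\Dics\rhd g_\lambda)^{**}$); the paper's route is more concise but leans on the external duality machinery.
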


\begin{proof}
    According to Fenchel-Rockafellar duality (see, e.g.,~\cite{bauschke2011convex}), the dual problem associated with~\eqref{pb:SF-inner} aims to
    \begin{equation}
        \text{find } \quad
        u^\dagger_{\Dics} = \argmind{u\in \Rset^N} \frac{1}{2}\| u - v \|^2 + g_\lambda^*(\Dics^* u) = \prox_{g_\lambda^* \circ \Dics^*}(v)  . \label{prop:SF_prox:pr:eq1}
    \end{equation}
    In addition, according to~\cite[Thm.~19.1]{bauschke2011convex}, we have 
    \begin{equation}
        -u^\dagger_{\Dics} \in \partial_{\frac{1}{2}\| \cdot - v \|^2_2}(\Dics z^\dagger_{\Dics}) = \{\Dics z^\dagger_{\Dics} - v\} 
        \quad\Leftrightarrow \quad   
        \Dics z^\dagger_{\Dics} = v - u^\dagger_{\Dics}. \label{prop:SF_prox:pr:eq2}
    \end{equation}
    Combining~\eqref{prop:SF_prox:pr:eq1} and~\eqref{prop:SF_prox:pr:eq2} leads to $\Dics z^\dagger_{\Dics} = (\Id - \prox_{g_\lambda^* \circ \Dics^*})(v)$, which, combined with Moreau's identity gives~\eqref{prop:SF_prox:eq}.
    Further, by combining Prop.~12.36(ii) and Prop.~13.24(iv) from~\cite{bauschke2011convex}, we have ${(g_{\lambda}^* \circ \Dics^*)}^* =  \Dics \rhd g_\lambda$,
    hence the final result~\eqref{prop:SF_prox:eq_post}.
\end{proof}

Is it important to note that here, we don't consider the traditional output of the synthesis formulation $z^\dagger_{\Dics}$ but the denoised image $x^\dagger_{\Dics}$ that is recovered from it.
While $z^\dagger_{\Dics}$ might be non-unique and the application linking some input $v$ to $z^\dagger_{\Dics}$ is non-smooth, it is not the case for $x^\dagger_{\Dics}$, which results from the proximity operator.
We can thus deduce from~Proposition~\ref{prop:synthesis_as_prox_first} that the PnP-FB algorithm~\eqref{algo:FB-PnP} with $G$ being the SD aims to
\begin{equation}\label{pb:inv-min-SF}
    \text{find } \widehat{x}_{\Sm} \in \Argmind{x \in \Rset^N} \frac12 \| Ax - y \|^2 + \Dics \rhd g_\lambda( x).
\end{equation}

\smallskip
\paragraph{Synthesis problem equivalence}

Another standard strategy is to consider a full synthesis formulation of~\eqref{pb:min_gen}-\eqref{eq:def_p}. In this context, the objective is to
\begin{equation} \label{pb:min_gen-synth}
    \text{find } \widehat{z}_{\Sm} \in \Argmind{z \in \Rset^S} \frac{1}{2} \| A \Dics z -y \|^2 +  g_\lambda(z).
\end{equation}
Then, the signal estimate is \textit{synthesized} from the dictionary by computing $ \Dics \widehat{z}_{\Sm}$.
This problem can be solved by any sparse coding algorithm. For instance, when $g$ is proximable, algorithms of choice include FB algorithm, and its accelerated versions such as FISTA~\cite{BT09, chambolle2015convergence}.
In particular, the sparse code estimate $\widehat{z}_{\Sm}$ can be obtained using FB iterations, as follows
\begin{equation}    \label{algo:synthFB_simple}
    \begin{array}{l}
       \text{for } k = 0, 1, \ldots \\
       \left\lfloor
       \begin{array}{l}
        v_{k} =  z_t - \gamma \Dics^* A^*(A \Dics z_t - y)) , \\
        z_{k+1} = \prox_{\gamma g_\lambda}(v_k) ,
       \end{array}
       \right.
    \end{array}
\end{equation}
where $z_0 \in \Rset^S$, and $\gamma>0$ is the step-size chosen to enable convergence of ${(z_k)}_{k \in \mathbb N}$ to a solution to~\eqref{pb:min_gen-synth}~\cite[Thm. 3.4]{Combettes2005}.

The result below highlights the links between the full synthesis problem~\eqref{pb:min_gen-synth} and the analysis problem with SD~\eqref{pb:inv-min-SF}. 
Similar results can be found in~\cite[Prop.~1]{briceno2024infimal} to deduce an alternative formulation for the Douglas-Rachford algorithm.
\begin{theorem}\label{thm:equivalence_synthesis}
    Let $f = \frac{1}{2} \|A \cdot - y\|^2$. Then we have
    \begin{equation}    \label{thm:equivalence_synthesis:min}
        \min_{x\in \mathbb R^N} f( x) + \Dics \rhd g_\lambda ( x)
        = \min_{z \in \mathbb R^S} f(\Dics z) + g_\lambda(z).
    \end{equation}
    In addition, 
    \begin{enumerate}
        \item if $\widehat{z}_{\Sm} \in \Rset^S$ is a solution to~\eqref{pb:min_gen-synth}, then $\widehat{x}_{\Sm}= \Dics \widehat{z}_{\Sm} \in \Rset^N$ is a solution to~\eqref{pb:inv-min-SF},
        \item if $\widehat{x}_{\Sm}\in \Rset^N$ is a solution to~\eqref{pb:inv-min-SF}, then there exists a solution $\widehat{z}_{\Sm} \in \Rset^S$ to~\eqref{pb:min_gen-synth} such that $\widehat{x}_{\Sm}= \Dics \widehat{z}_{\Sm}$.
    \end{enumerate}
    
\end{theorem}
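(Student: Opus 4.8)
The plan is to exploit the definition of the infimal post-composition directly, together with the fact---already established in Proposition~\ref{prop:synthesis_as_prox_first}---that $\Dics \rhd g_\lambda$ is a well-defined element of $\Gamma_0(\Rset^N)$ (in particular it is proper, since $g_\lambda$ is and $\Dics$ is everywhere-defined and linear). First I would unfold~\eqref{def:inf-post-comp}: for every $x \in \Rset^N$,
\[
    (\Dics \rhd g_\lambda)(x) = \inf_{z \in \Rset^S,\ \Dics z = x} g_\lambda(z).
\]
Adding $f(x)$ on both sides and noting that $f(x) = f(\Dics z)$ whenever $\Dics z = x$, we get $f(x) + (\Dics \rhd g_\lambda)(x) = \inf_{z:\, \Dics z = x}\big( f(\Dics z) + g_\lambda(z)\big)$. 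Taking the infimum over $x \in \Rset^N$ and using that every $z \in \Rset^S$ satisfies $\Dics z = x$ for exactly one $x$ (namely $x = \Dics z$), the double infimum collapses to a single infimum over $z$, which yields~\eqref{thm:equivalence_synthesis:min}. I would be slightly careful about the convention when the constraint set $\{z : \Dics z = x\}$ is empty (then the inner infimum is $+\infty$ by the stated convention, matching~\eqref{def:inf-post-comp}), so that such $x$ never affect the outer infimum; and I would note that both sides are finite because the problems~\eqref{pb:min_gen-synth} and~\eqref{pb:inv-min-SF} admit minimizers (coercivity/convexity of $f$ composed with $\Dics$ plus $g_\lambda \in \Gamma_0$).

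For part~(i), suppose $\widehat{z}_{\Sm}$ solves~\eqref{pb:min_gen-synth} and set $\widehat{x}_{\Sm} = \Dics \widehat{z}_{\Sm}$. Then
\[
    f(\widehat{x}_{\Sm}) + (\Dics \rhd g_\lambda)(\widehat{x}_{\Sm})
    \le f(\Dics \widehat{z}_{\Sm}) + g_\lambda(\widehat{z}_{\Sm})
    = \min_{z} f(\Dics z) + g_\lambda(z)
    = \min_{x} f(x) + (\Dics \rhd g_\lambda)(x),
\]
where the first inequality is the definition of the infimal post-composition evaluated at $\widehat{x}_{\Sm}$ (using $z = \widehat{z}_{\Sm}$ as a feasible point) and the last equality is~\eqref{thm:equivalence_synthesis:min}; hence $\widehat{x}_{\Sm}$ attains the minimum of~\eqref{pb:inv-min-SF}. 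For part~(ii), suppose $\widehat{x}_{\Sm}$ solves~\eqref{pb:inv-min-SF}. By~\eqref{thm:equivalence_synthesis:min} the optimal value equals $\min_z f(\Dics z) + g_\lambda(z)$, and this minimum is attained, say at some $z^\star$ with $\Dics z^\star = x^\star$ optimal for~\eqref{pb:inv-min-SF}. The remaining point is to produce a solution $\widehat{z}_{\Sm}$ of~\eqref{pb:min_gen-synth} whose image under $\Dics$ is exactly the prescribed $\widehat{x}_{\Sm}$, not just some optimal $x^\star$. Here I would argue that the infimum defining $(\Dics \rhd g_\lambda)(\widehat{x}_{\Sm})$ is attained: this follows because $g_\lambda \in \Gamma_0(\Rset^S)$ and the attainment of the infimal post-composition at a point of its domain is guaranteed under the hypotheses in play (e.g.\ via~\cite[Prop.~12.36]{bauschke2011convex} or directly by coercivity of $g_\lambda$ on the affine set $\{z: \Dics z = \widehat{x}_{\Sm}\}$ after reducing to a complemented subspace). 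Let $\widehat{z}_{\Sm}$ be such a minimizer, so $\Dics \widehat{z}_{\Sm} = \widehat{x}_{\Sm}$ and $g_\lambda(\widehat{z}_{\Sm}) = (\Dics \rhd g_\lambda)(\widehat{x}_{\Sm})$. Then
\[
    f(\Dics \widehat{z}_{\Sm}) + g_\lambda(\widehat{z}_{\Sm})
    = f(\widehat{x}_{\Sm}) + (\Dics \rhd g_\lambda)(\widehat{x}_{\Sm})
    = \min_{x} f(x) + (\Dics \rhd g_\lambda)(x)
    = \min_{z} f(\Dics z) + g_\lambda(z),
\]
so $\widehat{z}_{\Sm}$ solves~\eqref{pb:min_gen-synth}, as required.

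The one genuinely delicate point---and the place I expect to have to argue rather than just compute---is the \emph{attainment} of the infimum in the definition of $(\Dics \rhd g_\lambda)(\widehat{x}_{\Sm})$ needed for part~(ii); the equality of values~\eqref{thm:equivalence_synthesis:min} and part~(i) are essentially bookkeeping with infima. To handle attainment cleanly I would either (a) invoke a standard exactness criterion for infimal post-composition of a proper lsc convex function through a linear map (such criteria require, e.g., that $g_\lambda$ be coercive on each fiber, or a suitable recession-cone condition), or (b) observe that since~\eqref{pb:min_gen-synth} has a solution $z^\star$, the set of solutions of~\eqref{pb:min_gen-synth} is nonempty and closed, and then use a standard decomposition $z = z_0 + z_1$ with $z_0$ in a fixed right-inverse image of $\widehat{x}_{\Sm}$ and $z_1 \in \ker \Dics$ to turn the fiber infimum into an unconstrained minimization of a $\Gamma_0$ function over $\ker\Dics$, which is attained whenever the objective has a minimizer overall. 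If $g$ is additionally coercive (as for $g = \lambda\|\cdot\|_1$, after noting $\ker\Dics \cap \{g\text{ bounded}\}$ is trivial) attainment is immediate; I would state the genericity hypotheses under which this holds and otherwise cite~\cite[Prop.~12.36(ii)]{bauschke2011convex}.
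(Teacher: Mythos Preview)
Your argument is correct and, for the equality of minima~\eqref{thm:equivalence_synthesis:min}, takes a more elementary route than the paper: you unfold the definition of the infimal post-composition directly and collapse the double infimum over $(x,z)$ with $\Dics z = x$ into a single infimum over $z$, whereas the paper applies Fenchel--Rockafellar strong duality twice (going through $-\min_v f^*(v) + g_\lambda^*(-\Dics^* v)$ and back) and then identifies $(g_\lambda^* \circ \Dics^*)^* = \Dics \rhd g_\lambda$. Your approach avoids any qualification conditions for strong duality and is conceptually simpler; the paper's duality route, on the other hand, makes the link with the conjugate $(g_\lambda^* \circ \Dics^*)^*$ explicit, which ties back to Proposition~\ref{prop:synthesis_as_prox_first}.

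For part~(i) the two arguments are essentially identical. For part~(ii) you are actually more careful than the paper: the paper first shows $\widehat{x}_{\Sm} \in \mathrm{im}(\Dics)$ via a $\ker\Dics^*$ argument and then simply writes the fiber infimum as a $\min$, asserting the existence of $\widehat{z}_{\Sm}$ without further justification. You correctly isolate attainment of the fiber infimum as the only nontrivial step and propose reasonable routes (coercivity on the affine fiber, recession-cone/exactness criteria, or reduction to $\ker\Dics$). Either the paper's brief argument or your proposed remedies would suffice under the standing assumptions on $g_\lambda$.
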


\begin{proof}
    Using twice Fenchel-Rockafellar strong duality (see, e.g., Def.~15.19 and Prop.~15.21 in~\cite{bauschke2011convex}), we have
    \begin{align}
        \min_{z \in \Rset^S} f(\Dics z) +  g_\lambda(z)
        &= - \min_{v \in \Rset^N} f^*(v) + g_\lambda^*(-\Dics^* v)
        =  - \min_{v \in \Rset^N} f^*(-v) + g_\lambda^* \circ \Dics^*( v) \nonumber \\
        &= \min_{x \in \Rset^N} f(x) + {(g_\lambda^* \circ \Dics^*)}^*(x)
        \label{eq:minima_equality} \\
        &= \min_{x \in \Rset^N} f(x) + \Dics \rhd g_\lambda (x) ,\label{eq:minima_equality2}
    \end{align}
    where in~\eqref{eq:minima_equality}, the Fenchel-Rockafellar strong duality is applied to $f^*$ and $(g_\lambda^* \circ \Dics^*)$, with $f^{**}=f$ as $f\in \Gamma_0(\Rset^N)$, and the last equality is obtained using \cite[Prop.~13.24]{bauschke2011convex}.
    This shows~\eqref{thm:equivalence_synthesis:min}, i.e., that the values of the two problems minima are the same.

    \smallskip
    \noindent
    (i) Let $\widehat{z}_{\Sm}$ be a solution to~\eqref{pb:min_gen-synth}. We now want to show that $\widehat{x}_{\Sm}= \Dics \widehat{z}_{\Sm}$ is solution to~\eqref{pb:inv-min-SF}.
    Using the definitions of the infimal post-composition and conjugate functions, we have
    \begin{align*}
        f( \Dics \widehat{z}_{\Sm} ) + \Dics \rhd g_\lambda(\Dics \widehat{z}_{\Sm})
        &=f{( \Dics \widehat{z}_{\Sm} ) + ( g_\lambda^* \circ \Dics^*)}^*(\Dics \widehat{z}_{\Sm}) \nonumber
        = f( \Dics \widehat{z}_{\Sm} ) + \sup_{x\in \Rset^N} \langle x, \Dics \widehat{z}_{\Sm} \rangle - g_\lambda^*(\Dics^* x) \\
        &= f( \Dics \widehat{z}_{\Sm} ) + \sup_{x\in \Rset^N} \langle \Dics^* x, \widehat{z}_{\Sm} \rangle -  g_\lambda^*(\Dics^* x).
    \end{align*}
    Since, for every $x \in \mathbb R^N$,  $\Dics^*x \in \mathbb R^S$, we obtain
    \begin{align}
        f( \Dics \widehat{z}_{\Sm} ) + \Dics \rhd g_\lambda(\Dics \widehat{z}_{\Sm})
        & \le f( \Dics \widehat{z}_{\Sm} ) + \sup_{v\in \Rset^S} \langle v, \widehat{z}_{\Sm} \rangle -  g_\lambda^*(v) 
         = f( \Dics \widehat{z}_{\Sm} ) +  g_\lambda(\widehat{z}_{\Sm}) \label{eq:unconstrained_conjugate}
    \end{align}
    where the last equality is obtained using the definition of conjugate functions (and $g_\lambda^{**}=g_\lambda$ as $g_\lambda\in \Gamma_0$).
    Further, according to~\eqref{eq:minima_equality}, since $\widehat{z}_{\Sm}$ is a solution to~\eqref{pb:min_gen-synth}, we have
    \begin{align}
         f( \Dics \widehat{z}_{\Sm} ) +  g_\lambda(\widehat{z}_{\Sm})
        &   = \min_{x \in \mathbb R^N} f(x) +  \Dics \rhd g_\lambda(x).
        \label{eq:optimal_pnp}
    \end{align}
    Combining~\eqref{eq:unconstrained_conjugate} and~\eqref{eq:optimal_pnp}, we obtain $f( \Dics \widehat{z}_{\Sm} ) + \Dics \rhd g_\lambda(\Dics \widehat{z}_{\Sm}) \le  \min_{x \in \mathbb R^N} f(x) +  \Dics \rhd g_\lambda(x).$ \\
    Hence 
    $\Dics \widehat{z}_{\Sm} \in \Argmind{x\in \Rset^N} f(x) +  \Dics \rhd g_\lambda(x)$, so
    $\widehat{x}_{\Sm} = \Dics \widehat{z}_{\Sm}$ is solution to~\eqref{pb:inv-min-SF}.

    \smallskip
    \noindent
    (ii) Let $\widehat{x}_{\Sm}$ be a solution to~\eqref{pb:inv-min-SF}. We will show that there exist some
    $\widehat{z}_{\Sm}$ such that $\widehat{x}_{\Sm} = \Dics \widehat{z}_{\Sm}$ and $\widehat{z}_{\Sm}$ solution
    to~\eqref{pb:min_gen-synth}.
    Assume that the component of $\widehat{x}_{\Sm}$ contained in ${\mathrm{im}(\Dics)}^\bot = \ker \Dics^*$ is not $0$, then $\sup_{v \in \Rset^N} \langle v, \widehat{x}_{\Sm} \rangle -  g_\lambda^*( \Dics^* v)
        \ge \sup_{v \in \ker \Dics^*} \langle v, \widehat{x}_{\Sm} \rangle -  g_\lambda^*(0)
        \ge +\infty$.
    As the value of~\eqref{eq:minima_equality} is finite, this would violate the optimality of $\widehat{x}_{\Sm}$. Thus $\widehat{x}_{\Sm}$ is contained in the image of $\Dics$, and there indeed exists $\widehat{z}_{\Sm}\in \mathbb R^S$ such that $\widehat{x}_{\Sm} = \Dics \widehat{z}_{\Sm}$. 
    By definition of the infimal post-composition, we have
    \begin{equation}
        f( \widehat{x}_{\Sm} ) + \Dics \rhd g_\lambda(\widehat{x}_{\Sm})
        = f( \widehat{x}_{\Sm} ) + \inf_{z \in \mathbb R^S, \widehat{x}_{\Sm} = \Dics z }  g_\lambda(z) 
        = \min_{z\in \mathbb R^S, \widehat{x}_{\Sm} = \Dics z} f (\Dics z ) + g_\lambda(z)  .
        \label{eq:synth:optim_constrined_Xstar}
    \end{equation}
    Then, there exists $\widehat{z}_{\Sm}$ such that $\widehat{x}_{\Sm} = \Dics \widehat{z}_{\Sm}$, solution to~\eqref{eq:synth:optim_constrined_Xstar} 
    and we have
    \begin{equation*}
        f( \Dics \widehat{z}_{\Sm} ) + g_\lambda(\widehat{z}_{\Sm})
            = f( \widehat{x}_{\Sm} ) + \Dics \rhd g_\lambda(\widehat{x}_{\Sm})  .
    \end{equation*}
    Hence, thanks to~\eqref{eq:minima_equality2}, $\widehat{z}_{\Sm}$ is a minimizer of~\eqref{pb:min_gen-synth} and this concludes the proof. 
\end{proof}

Theorem~\ref{thm:equivalence_synthesis} shows that the minimum values of Problems~\eqref{pb:inv-min-SF} and~\eqref{pb:min_gen-synth} are the same.
It also highlights that any solution to~\eqref{pb:inv-min-SF} can be projected through $\Dics$ to find a solution to~\eqref{pb:min_gen-synth}. Reciprocally, any solution to~\eqref{pb:min_gen-synth} can be generated by some synthesis coefficients of $\Dics$ which are also solutions to~\eqref{pb:inv-min-SF}. 

\begin{example}
    In the particular case when $g_\lambda = \lambda \|\cdot\|_1$, then Theorem~\ref{thm:equivalence_synthesis} ensures that
    \begin{align}
        \min_{z \in \mathbb R^S} \frac{1}{2} \|y - A \Dics z \|_2^2 + \lambda \| z\|_1
        & = \min_{x \in \mathbb R^N} \frac{1}{2} \| y - A x \|_2^2 + \iota_{\mathcal{F}_\lambda^S(\Dics)}^*(x) \nonumber\\
        & = \min_{x \in \mathbb R^N}  \max_{x_0 \in \mathcal{F}_\lambda^S (\Dics)} \frac12 \| Ax-y \|^2 + \langle x , x_0 \rangle,\label{eq:SD_l1}
    \end{align}
    where $\mathcal{F}_\lambda^S(\Dics) = \{x \ | \ \|\Dics^* x \|_\infty \le \lambda \}$.
\end{example}

One can note the similarity between the analysis and synthesis problems formulations~\eqref{eq:SD_l1} and~\eqref{eq:AD_l1}. Indeed the analysis and the synthesis Lasso formulations for solving the inverse problem~\eqref{pb:invpb} share the same min-max optimization structure, with different convex constraints on the max variable.

\subsection{Synthesis denoiser structure}\label{Ssec:SD-struct}
As explained in Section~\ref{Sec:intro}, and similarly to the AD problem, the SD problem~\eqref{pb:SF-intro} does not have an explicit solution. However, it can be solved using FB iterations when $g$ is proximable~\cite{tseng2000modified, DDM04, Combettes2005, combettes2011proximal}. 
The resulting iterations read 
\begin{equation}    \label{algo:FB-SF}
\begin{array}{l}
\text{for } \ell = 0, 1, \ldots \\
    \left\lfloor
    \begin{array}{l}
        z_{\ell+1} = \prox_{ \zeta g_\lambda} \big( z_\ell - \zeta \Dics^*(\Dics z_\ell - v ) \big) ,
    \end{array}
    \right.
\end{array}
\end{equation}
where $z_0 \in \Rset^S$, and $\zeta>0$ is a step-size. 
Then the following convergence result can be deduced from~\cite[Thm.~3.4]{Combettes2005}.
\begin{theorem}\label{thm:cv:FB-SF}
\sloppy
Assume that ${0 < \zeta < 2 \| \Dics \|^{-2}_S}$.
Then the sequence ${(z_\ell)}_{\ell \in \Nset}$ converges to $z_{\Dics}^\dagger$, and $x^\dagger_{\Dics} = \Dics
z_{\Dics}^\dagger$ is a solution to~\eqref{pb:SF-inner}. 
\end{theorem}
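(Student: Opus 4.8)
The plan is to recognize iteration~\eqref{algo:FB-SF} as the forward--backward algorithm applied to the sum $f_1 + f_2$, where $f_1 \colon \Rset^S \to \Rset \colon z \mapsto \frac12 \|\Dics z - v\|^2$ and $f_2 = g_\lambda$, and then to invoke a standard convergence result for this scheme, in the same spirit as the deduction of Theorem~\ref{thm:cv:dualFB} from a known forward--backward convergence theorem. First I would note that $f_1$ is convex and differentiable with $\nabla f_1(z) = \Dics^*(\Dics z - v)$, so that for all $(z,z') \in (\Rset^S)^2$,
\[
  \| \nabla f_1(z) - \nabla f_1(z') \| = \| \Dics^*\Dics (z - z') \| \le \| \Dics^*\Dics \|_S \, \| z - z' \| = \| \Dics \|_S^2 \, \| z - z' \| ,
\]
i.e.\ $\nabla f_1$ is Lipschitz continuous with constant $\beta = \| \Dics \|_S^2$. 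Moreover, since $g \in \Gamma_0(\Rset^S)$ and $\lambda > 0$, we have $f_2 = g_\lambda = \lambda g \in \Gamma_0(\Rset^S)$, so $\prox_{\zeta g_\lambda}$ is well defined and single-valued, and~\eqref{algo:FB-SF} rewrites as $z_{\ell+1} = \prox_{\zeta f_2}\big( z_\ell - \zeta \nabla f_1(z_\ell) \big)$.

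Next I would verify the hypotheses of~\cite[Thm.~3.4]{Combettes2005}: the step-size condition required there, $0 < \zeta < 2/\beta$ with $\beta$ the Lipschitz constant of $\nabla f_1$, is precisely the stated assumption $0 < \zeta < 2\|\Dics\|_S^{-2}$; the inner iterations are computed exactly, so there are no error terms to control; and the set $\Argmind{z\in\Rset^S} f_1(z) + f_2(z)$ of minimizers of~\eqref{pb:SF-inner} is nonempty, which is the standing assumption under which $z^\dagger_\Dics$ is defined. Applying~\cite[Thm.~3.4]{Combettes2005} then yields that $(z_\ell)_{\ell\in\Nset}$ converges to some $z^\dagger_\Dics \in \Argmind{z\in\Rset^S} \frac12\|\Dics z - v\|^2 + g_\lambda(z)$.

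Finally, since $\Dics$ is linear, hence continuous, $\Dics z_\ell \to \Dics z^\dagger_\Dics$, and $x^\dagger_\Dics := \Dics z^\dagger_\Dics$ is by construction of the form $\Dics z$ with $z$ a minimizer, i.e.\ a solution to~\eqref{pb:SF-inner}. The whole argument is a routine verification that~\eqref{algo:FB-SF} fits the template of~\cite[Thm.~3.4]{Combettes2005}; the only place where a small computation is needed is the Lipschitz-constant identity $\beta = \|\Dics\|_S^2$, which is exactly what turns the abstract step-size bound $2/\beta$ into the one stated. If one wanted to be fully self-contained, one would additionally have to justify existence of a minimizer of $f_1 + f_2$ (for the sparsity-promoting penalties used in practice this follows from coercivity of $g$), but that is the only potential gap, and it is already implicit in the way~\eqref{pb:SF-inner} is posed.
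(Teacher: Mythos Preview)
Your proposal is correct and matches the paper's approach exactly: the paper does not give a separate proof but simply states that the result ``can be deduced from~\cite[Thm.~3.4]{Combettes2005}'', and you have written out precisely that deduction, including the identification of the Lipschitz constant $\beta=\|\Dics\|_S^2$ that turns the abstract step-size bound into the stated one.
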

Algorithm~\eqref{algo:FB-SF} can be used as sub-iterations when included within the global FB algorithm~\eqref{algo:FB-PnP}, to approximate the computation of $\prox_{\Dics \rhd g_\lambda}$. 
Similarly to Section~\ref{Sec:AD-PnP}, in this section we investigate the behavior of~\eqref{algo:FB-PnP} with the SD, when only one iteration of algorithm~\eqref{algo:FB-SF} is computed. To this aim, we define an unfolded iterative scheme approximating the proximal operator of $p = \Dics \rhd g_\lambda$, i.e., an SD as defined in~\eqref{pb:SF-intro}.
In the remainder, we use the superscript $\Sm$ to identify operators associated with the synthesis formulation.

\begin{model}[Synthesis denoiser (SD)]\label{mod:SF}
    Let $(v, z_0) \in \Rset^N \times \Rset^S$, $\Dics \colon \Rset^S \to \Rset^N$ be a linear operator, and $\lambda>0$ be a regularization parameter. Let $L \in \Nset^*$
    and $\widetilde{\NN}^{\Sm}_{L,\lambda, v} \colon  \Rset^S \to \Rset^S$ be defined as
    \begin{equation}\label{DLnet:synth-FB-dual}
    \widetilde{\NN}^{\Sm}_{L,\lambda, v}(z_0) = \underset{\text{$L$ compositions}}{\underbrace{\feedSF_{\lambda, v} \circ \dots \circ \feedSF_{\lambda, v}}}(z_0)	,
    \end{equation}
    where
    \begin{equation}\label{DLnet:synth-FB-layer}
    \feedSF_{\lambda, v} \colon \Rset^S \to \Rset^S 
    \colon z \mapsto \prox_{ \zeta g_\lambda} \big( z - \zeta {\Dics}^*( \Dics z - v) \big)
    \end{equation}
    with $\zeta>0$.
    The unrolled SD $\NN^{\Sm}_{L,\lambda, v} \colon  \Rset^S \to \Rset^N$ is obtained as follows:
    \begin{equation}\label{DLnet:synth-FB}
    \NN^{\Sm}_{L,\lambda, v}( z_0) = \Dics \widetilde{\NN}^{\Sm}_{L,\lambda, v}(z_0).
    \end{equation}
\end{model}

A few comments can be made on Model~\ref{mod:SF}.
\begin{remark}\label{rmk:unroll-SF-struct}\
\begin{enumerate}
\item
The layers of Model~\ref{mod:SF} consisting of FB iterations, they correspond to feed-forward layers, and $\feedSF_{\lambda, v}$ can be reformulated as
    \begin{equation*}
    (\forall z \in \Rset^S)\quad
    \feedSF_{\lambda, v} ( z) = \prox_{ \zeta g_\lambda} \Big( \big( \Id - \zeta \Dics^*\Dics \big) z + \zeta {\Dics}^* v \Big),
    \end{equation*}
where $\Id - \zeta \Dics^*\Dics$ is a linear operator and $\zeta {\Dics}^* v$ a bias.
This reformulation highlights the residual structure of each layer, and makes explicit the operator and bias involved in the synthesis denoiser.

\item\label{rmk:unroll-SF-struct:ii}
The feed-forward layers $\feedAF_{\lambda, v}$ and $\feedSF_{\lambda, v}$ share the same structure, except that the activation function in $\feedAF_{\lambda, v}$ corresponds to the proximity operator of $g_\lambda^*$, while for $\feedSF_{\lambda, v}$ it corresponds to the proximity operator of $g_\lambda$.

\item
A direct consequence of Theorem~\ref{thm:cv:FB-SF} is that, for any $z_0\in \Rset^S$,  if $0<  \zeta < 2 \|\Dics\|_S^{-2}$, then $\displaystyle \lim_{L \to +\infty} \NN^{\Sm}_{L, \lambda, v}(z_0) = x_{\Dics}^\dagger$,
where $x_{\Dics}^\dagger$ is defined in~\eqref{pb:SF-inner}.
\end{enumerate}
\end{remark}

\subsection{FB-PnP algorithm with approximated SD}
\label{Ssec:SD-FBPnP}

In this section we aim to solve~\eqref{pb:inv-min-SF}.
We thus study the asymptotic behavior of the FB-PnP algorithm~\eqref{algo:FB-PnP} with the SD given in Model~\ref{mod:SF}. In this context, this algorithm boils down to
\begin{equation}    \label{algo:FB-pnp-SD}
    \begin{array}{l}
        x_0\in \Rset^N, z_0 \in \Rset^S,\\
       \text{for } k = 0, 1, \ldots \\
       \left\lfloor
       \begin{array}{l}
        {v}_{k} =  x_k - \tau A^*(A x_k - y) , \\
        z_{k+1} = \widetilde{\NN}^{\Sm}_{L, \tau \lambda, v_k}(z_k), \\
        x_{k+1} = \Dics z_{k+1}, 
       \end{array}
       \right.
    \end{array}
\end{equation}
where $\tau>0$ is the stepsize of the FB algorithm and $L\in \Nset^*$ is the number of iterations of the AD in Model~\ref{mod:AF}.
The following convergence result is a direct application of results from~\cite{Combettes2005}.
\begin{theorem}\label{thm:cvgce-pnp-limit-SD}
    Let ${(x_k)}_{k\in \Nset}$ be generated by algorithm~\eqref{algo:FB-pnp-SD}. Assume that $0 < \tau < 2 \| A\|_S^{-2} $ and $0 < \zeta < 2 \|\Dics\|_S^{-2}$.
    When $L\to \infty$ in Model~\ref{mod:SF} (i.e., when problem~\eqref{pb:SF-inner} is solved accurately)
    then ${(x_k)}_{k \in \mathbb{N}}$ converges to a solution to problem~\eqref{pb:inv-min-SF}.
\end{theorem}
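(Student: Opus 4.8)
The plan is to mirror the structure of the proof of Theorem~\ref{thm:cvgce-pnp-limit-AD}, namely to recognize that when $L\to\infty$ the inner iterations compute the proximity operator $\prox_{\Dics\rhd g_\lambda}$ exactly, and then to invoke the classical convergence theory for the forward-backward algorithm. First I would recall from Proposition~\ref{prop:synthesis_as_prox_first} that $x^\dagger_{\Dics}=\Dics z^\dagger_{\Dics}=\prox_{\Dics\rhd g_\lambda}(v)$, and from Theorem~\ref{thm:cv:FB-SF} (together with Remark~\ref{rmk:unroll-SF-struct}(iii)) that, under the stepsize condition $0<\zeta<2\|\Dics\|_S^{-2}$, the unrolled network output converges: $\lim_{L\to\infty}\NN^{\Sm}_{L,\lambda,v}(z_0)=x^\dagger_{\Dics}=\prox_{\Dics\rhd g_\lambda}(v)$ for any warm-start $z_0$. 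Hence, passing $L\to\infty$ in algorithm~\eqref{algo:FB-pnp-SD}, the update $x_{k+1}=\Dics z_{k+1}=\NN^{\Sm}_{L,\tau\lambda,v_k}(z_k)$ becomes $x_{k+1}=\prox_{\tau(\Dics\rhd g_\lambda)}(v_k)=\prox_{\tau(\Dics\rhd g_\lambda)}\big(x_k-\tau A^*(Ax_k-y)\big)$, using $\tau\lambda$ as the regularization parameter so that $(\Dics\rhd g_\lambda)$ is scaled by $\tau$ consistently with the definition of $g_\lambda$.

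Next I would verify the hypotheses needed to apply the forward-backward convergence theorem, e.g.\ \cite[Thm.~3.4]{Combettes2005}, to the problem~\eqref{pb:inv-min-SF}. The smooth term is $f=\tfrac12\|A\cdot-y\|^2$, which is convex and differentiable with $\nabla f(x)=A^*(Ax-y)$ Lipschitz continuous with constant $\|A\|_S^2$; the stepsize condition $0<\tau<2\|A\|_S^{-2}$ is exactly the FB stepsize requirement. The nonsmooth term is $\Dics\rhd g_\lambda$, which I would confirm lies in $\Gamma_0(\Rset^N)$: by Proposition~\ref{prop:synthesis_as_prox_first} it equals ${(g_\lambda^*\circ\Dics^*)}^*$, a Fenchel conjugate of a proper convex function, hence convex and lower semi-continuous; properness follows since $g\in\Gamma_0$ ensures $\Dics\rhd g_\lambda$ is not identically $+\infty$ (it is finite on $\mathrm{im}(\Dics)$) and, by the conjugate representation, never takes the value $-\infty$. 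I would also note that the set of minimizers of~\eqref{pb:inv-min-SF} is nonempty under the standing assumptions (or invoke the equivalence in Theorem~\ref{thm:equivalence_synthesis} with~\eqref{pb:min_gen-synth} to transfer existence if needed).

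With these ingredients, the iteration in the limit $L\to\infty$ is exactly a forward-backward iteration $x_{k+1}=\prox_{\tau(\Dics\rhd g_\lambda)}(x_k-\tau\nabla f(x_k))$ applied to~\eqref{pb:inv-min-SF}, so \cite[Thm.~3.4]{Combettes2005} yields convergence of $(x_k)_{k\in\Nset}$ to a solution of~\eqref{pb:inv-min-SF}, completing the proof.

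I do not anticipate a genuine obstacle here, since the result is essentially a corollary of Proposition~\ref{prop:synthesis_as_prox_first}, Theorem~\ref{thm:cv:FB-SF}, and the standard FB theory; the only point requiring a little care is the bookkeeping of the regularization scaling (the inner network is called with parameter $\tau\lambda$, so that the effective outer proximal term is $\tau$ times $\Dics\rhd g_\lambda$, matching the FB update), and the verification that $\Dics\rhd g_\lambda\in\Gamma_0(\Rset^N)$ so that its proximity operator is well-defined and single-valued, which is precisely what Proposition~\ref{prop:synthesis_as_prox_first} guarantees.
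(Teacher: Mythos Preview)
Your proposal is correct and follows exactly the approach the paper takes: the paper states this theorem as ``a direct application of results from~\cite{Combettes2005}'' without giving an explicit proof, and your argument spells out precisely the required ingredients (Proposition~\ref{prop:synthesis_as_prox_first} to identify the SD as $\prox_{\Dics\rhd g_\lambda}$, Theorem~\ref{thm:cv:FB-SF}/Remark~\ref{rmk:unroll-SF-struct}(iii) for the limit $L\to\infty$, and then \cite[Thm.~3.4]{Combettes2005} for the outer FB convergence). Your bookkeeping on the $\tau\lambda$ scaling and the verification that $\Dics\rhd g_\lambda\in\Gamma_0(\Rset^N)$ are accurate and more detailed than what the paper provides.
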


Similarly to Theorem~\ref{thm:cvgce-pnp-limit-AD}, Theorem~\ref{thm:cvgce-pnp-limit-SD} only holds when $L \to \infty$, i.e.\ when $\widetilde{\NN}^{\Sm}$ has a very large number of sub-iterations (i.e., layers). As for the AD case, this is intractable, and only a fixed (low) number of iterations are considered in practice. 
In the remainder of this section, we focus on the case when only $L=1$ sub-iteration is computed in Model~\ref{mod:SF}, and investigate the behavior of ${(x_k)}_{k\in \mathbb N}$ in this context, as well as the nature of the provided solution. 
In this particular case, at iteration $k\in \mathbb N$, the update of $z_k$ in algorithm~\eqref{algo:FB-pnp-SD} boils down to 
\begin{equation}\label{algo:FB-pnp-synthesis-1}
    z_{k+1} = \widetilde{\NN}^{\Sm}_{1, \tau \lambda, v_k}(z_k) = \feedSF_{\tau \lambda, v_k}(z_k),
\end{equation}
where $\feedSF_{\tau\lambda, v_k}$ is defined in~\eqref{DLnet:synth-FB-layer}.

We will now show that algorithm~\eqref{algo:FB-pnp-SD}-\eqref{algo:FB-pnp-synthesis-1} is equivalent to the FB iterations in~\eqref{algo:synthFB_simple} for solving~\eqref{pb:min_gen-synth}. This will enable us to deduce asymptotic convergence guarantees of algorithm~\eqref{algo:FB-pnp-SD} when $L=1$.

\begin{theorem}\label{thm:cvg-SF-L1}
    Let ${(z_k)}_{k\in \mathbb N}$ and ${(x_k)}_{k\in \mathbb N}$ be sequences generated by algorithm~\eqref{algo:FB-pnp-SD} with $L=1$. Assume that $0< \tau \zeta < 2 \| A \Dics \|_S^{-2}$, then 
    \begin{enumerate}
        \item ${(z_k)}_{k\in \mathbb N}$ converges to a solution $\widehat{z}_S$ to problem~\eqref{pb:min_gen-synth}.
        \item ${(x_k)}_{k \in \mathbb N}$ converges to a solution $\widehat{x}_S$ to problem~\eqref{pb:inv-min-SF}.
    \end{enumerate}
\end{theorem}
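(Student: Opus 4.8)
The plan is to show that, for every $k\ge 1$, the sequence ${(z_k)}_{k\in\Nset}$ produced by algorithm~\eqref{algo:FB-pnp-SD}--\eqref{algo:FB-pnp-synthesis-1} coincides with the forward-backward iterations~\eqref{algo:synthFB_simple} for the full synthesis problem~\eqref{pb:min_gen-synth}, with step-size $\gamma=\tau\zeta$; assertion (i) then follows from~\cite[Thm.~3.4]{Combettes2005}, and assertion (ii) from Theorem~\ref{thm:equivalence_synthesis}(i). First I would unfold the inner update: by~\eqref{algo:FB-pnp-synthesis-1}, the definition~\eqref{DLnet:synth-FB-layer} of $\feedSF$, and $g_{\tau\lambda}=\tau g_\lambda$ (so $\prox_{\zeta g_{\tau\lambda}}=\prox_{\zeta\tau g_\lambda}$), algorithm~\eqref{algo:FB-pnp-SD} with $L=1$ reads
\[
  z_{k+1} = \prox_{\zeta\tau g_\lambda}\bigl(z_k - \zeta\Dics^*(\Dics z_k - v_k)\bigr),
  \qquad v_k = x_k - \tau A^*(Ax_k - y),\qquad x_{k+1}=\Dics z_{k+1}.
\]

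Next I would exploit the warm restart. The last line above forces $x_k=\Dics z_k$ for every $k\ge 1$, and substituting this into $v_k$ gives $\Dics z_k - v_k = \tau A^*(A\Dics z_k - y)$, hence $\Dics^*(\Dics z_k - v_k)=\tau\Dics^*A^*(A\Dics z_k - y)$. Therefore, for all $k\ge 1$,
\[
  z_{k+1} = \prox_{\tau\zeta\, g_\lambda}\bigl(z_k - \tau\zeta\,\Dics^* A^*(A\Dics z_k - y)\bigr),
\]
which is exactly the forward-backward scheme~\eqref{algo:synthFB_simple} for~\eqref{pb:min_gen-synth} with step-size $\gamma=\tau\zeta$ and initialization $z_1$. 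To conclude (i) I would invoke~\cite[Thm.~3.4]{Combettes2005}: the function $z\mapsto\frac12\|A\Dics z-y\|^2$ has $\|A\Dics\|_S^2$-Lipschitz gradient, so under $0<\tau\zeta<2\|A\Dics\|_S^{-2}$ the sequence ${(z_k)}_{k\ge 1}$, and hence ${(z_k)}_{k\in\Nset}$, converges to some $\widehat z_{\Sm}$ solving~\eqref{pb:min_gen-synth}. For (ii), since $x_k=\Dics z_k$ for $k\ge 1$ and $\Dics$ is continuous, $x_k\to\Dics\widehat z_{\Sm}=:\widehat x_{\Sm}$, and by Theorem~\ref{thm:equivalence_synthesis}(i) this limit solves~\eqref{pb:inv-min-SF}.

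The only point needing care — rather than a genuine obstacle — is the first step $k=0$: there $x_0$ and $z_0$ are independent initializations, so $x_0=\Dics z_0$ need not hold and the cancellation $\Dics^*\Dics z_0-\Dics^* x_0=0$ fails. This is harmless because the identity $x_k=\Dics z_k$ is automatically enforced from $k=1$ onward by the update rule, so it suffices to run the equivalence (and the convergence result of~\cite{Combettes2005}) starting from index~$1$; dropping the single term $z_0$ does not affect convergence of the sequence. Apart from this, the proof is a direct substitution, the main bookkeeping being to check that the prox rescaling $g_{\tau\lambda}=\tau g_\lambda$ together with the two gradient-step factors $\tau$ and $\zeta$ combine into the single step-size $\gamma=\tau\zeta$, which is precisely the one appearing in the hypothesis $0<\tau\zeta<2\|A\Dics\|_S^{-2}$.
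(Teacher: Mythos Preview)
Your proposal is correct and follows exactly the paper's approach: unfold the $L=1$ update, use $x_k=\Dics z_k$ to cancel the $\Dics^*\Dics z_k$ term, recognize the resulting iteration as the FB scheme~\eqref{algo:synthFB_simple} with step-size $\gamma=\tau\zeta$, then apply \cite[Thm.~3.4]{Combettes2005} and Theorem~\ref{thm:equivalence_synthesis}(i). Your handling of the $k=0$ case is in fact more careful than the paper's, which silently uses $x_k=\Dics z_k$ for all $k$ without noting that this only holds from $k=1$ onward.
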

\begin{proof}
    Expending algorithm~\eqref{algo:FB-pnp-SD}-\eqref{algo:FB-pnp-synthesis-1}, we obtain, for every $k \in \mathbb N$,
    \begin{align}
        v_k &=  x_k - \tau A^*(Ax_k - y)    \label{thm:cvg-SF-L1:pr1}\\
        \widetilde{z}_k &=  z_k - \zeta D^*(D z_k - v_k)    \label{thm:cvg-SF-L1:pr2} \\
        z_{k+1} &=  \prox_{\tau \zeta g_\lambda} (\widetilde{z}_k)    \label{thm:cvg-SF-L1:pr3} \\
        x_{k+1} &=  D z_{k+1},     \label{thm:cvg-SF-L1:pr4}
    \end{align}
    where \eqref{thm:cvg-SF-L1:pr3} is obtained by definition of $\feedSF_{\tau \lambda, v_k}(z_k) = \prox_{\zeta g_{\tau\lambda}}(\widetilde{z}_k)$, and noticing that $g_{\lambda\tau} = \tau g_{\lambda}$.
    Combining~\eqref{thm:cvg-SF-L1:pr1} and~\eqref{thm:cvg-SF-L1:pr2}, and then using~\eqref{thm:cvg-SF-L1:pr4}, we obtain
    \begin{align*}
        \widetilde{z}_k 
        &=  z_k - \zeta D^*\Big( D z_k - \big( x_k - \tau A^*(Ax_k - y) \big) \Big) \\
        &=  z_k - \zeta D^*\Big( D z_k - \big( Dz_k - \tau A^*(A Dz_k - y) \big) \Big) \\
        &=  z_k - \tau \zeta D^*A^*(A Dz_k - y)  .
    \end{align*}
    Hence, algorithm~\eqref{algo:FB-pnp-SD}-\eqref{algo:FB-pnp-synthesis-1} is equivalent to
    \begin{equation}    \label{thm:cvg-SF-L1:pr5}
    \begin{array}{l}
       \text{for } k = 0, 1, \ldots \\
       \left\lfloor
       \begin{array}{l}
        \widetilde{z}_k  =  z_k - \tau \zeta D^*A^*(A Dz_k - y) , \\
        z_{k+1} = \prox_{\tau \zeta g_\lambda} (\widetilde{z}_k) , \\
        x_{k+1} = \Dics z_{k+1} .
       \end{array}
       \right.
    \end{array}
    \end{equation}
    Algorithm~\eqref{thm:cvg-SF-L1:pr5} corresponds to the FB iterations given in~\eqref{algo:synthFB_simple} with step-size $\gamma=\tau \zeta$. Then, according to~\cite[Thm.~3.4]{Combettes2005}, ${(z_k)}_{k \in \mathbb N}$ converges to a solution $\widehat{z}_S$ to~\eqref{pb:min_gen-synth} if $0< \tau \zeta < 2 \| A \Dics \|_S^2$. 
    Further, according to Theorem~\ref{thm:equivalence_synthesis}(i), $\widehat{x}_S = D \widehat{z}_S$ is a solution to~\eqref{pb:inv-min-SF}.
\end{proof}

To summarize, in Theorem~\ref{thm:cvg-SF-L1}, as in the case of AD, we show that we can recover the solution of the full problem~\eqref{pb:inv-min-SF} using only $L=1$ sub-iterations, provided that we use a warm-restarting strategy.

\section{Approximated AD and SD for smooth minimization}\label{sec:bi-level-approximate}
Algorithms~\eqref{algo:FB-pnp-AD}-\eqref{algo:FB-pnp-ana-1} and \eqref{algo:FB-pnp-SD}-\eqref{algo:FB-pnp-synthesis-1} enable using a unique sub-iteration of an unrolled FB-based algorithm within a FB-PnP framework, for solving~\eqref{pb:inv-min-AF} and~\eqref{pb:inv-min-SF}, respectively. 
In particular, we showed in Theorem~\ref{prop:analysis_warm_restart} and Theorem~\ref{thm:cvg-SF-L1} that such an approximation enables solving the problems of interest~\eqref{pb:inv-min-AF} and~\eqref{pb:inv-min-SF}, respectively, as if an infinite number of sub-iterations were computed.
In this section, we focus on a particular case when a smoothed version of the problems of interest is considered. 

In this context, we do not need to consider the AD and SD problems separately. Instead, we focus on a generic problem of the form
\begin{equation}\label{pb:min:gen-bilevel}
    \minimize{x \in \mathbb R^{\widetilde{N}}} \widetilde{f}(x) + \widetilde{g}_\lambda(x),
\end{equation}
where $\widetilde{f} \colon \mathbb R^{\widetilde{N}} \to (-\infty, +\infty]$ is assumed to be convex and $\widetilde{\beta}$-Lipschitz differentiable, for $\widetilde{\beta}>0$, 
and $\widetilde{g}_\lambda = \lambda \widetilde{g}$ is a convex, proper and lower-semicontinuous function with $\widetilde{g} \colon \mathbb R^{\widetilde{N}} \to (-\infty, +\infty]$ and $\lambda>0$. 
Then, the synthesis problem is a particular case of~\eqref{pb:min:gen-bilevel} where
\begin{equation}\label{pb:min:gen-bilevel:SF}
    \begin{cases}
    \begin{array}{l@{}l@{}l}
        \widetilde{f} \colon & \mathbb R^S \to (-\infty, +\infty] \colon & z \mapsto \| A \Dics z - y \|^2 \\
        \widetilde{g}_\lambda \colon & \mathbb R^S \to (-\infty, +\infty] \colon & z \mapsto g_\lambda(z).
    \end{array}
    \end{cases}
\end{equation}
Similarly, the analysis problem is a particular case of~\eqref{pb:min:gen-bilevel} where
\begin{equation}\label{pb:min:gen-bilevel:AF}
    \begin{cases}
    \begin{array}{l@{}l@{}l}
        \widetilde{f} \colon & \mathbb R^N \to (-\infty, +\infty] \colon & x \mapsto \| Ax - y \|^2 \\
        \widetilde{g}_\lambda \colon & \mathbb R^N \to (-\infty, +\infty] \colon & x \mapsto g_\lambda(\Dica x).
    \end{array}
    \end{cases}
\end{equation}
\begin{remark}
    We would emphasize that, according to Theorem~\ref{thm:equivalence_synthesis}, if $\widehat{z}$ is a solution to full synthesis problem~\eqref{pb:min:gen-bilevel}-\eqref{pb:min:gen-bilevel:SF}, then $\Dics \widehat z$ is a solution to analysis problem with SD~\eqref{pb:inv-min-SF}.
\end{remark}

In this section, we focus on a smooth regularized version of problem~\eqref{pb:min:gen-bilevel}, aiming to
\begin{equation}
    \label{pb:min:gen-bilevel-moreau}
    \text{find } x^\ddagger = \argmind{x \in \mathbb{R}^{\widetilde{N}}} 
    \widetilde{f}(x) + {^\mu\widetilde{g}_\lambda}(x), 
\end{equation}
where $^\mu\widetilde{g}_\lambda$ denotes de Moreau-Yosida envelope of $\widetilde{g}_\lambda$ with parameter $\mu$, defined as
\begin{equation*}
    (\forall x \in \mathbb R^{\widetilde{N}})\quad
    {^\mu\widetilde{g}_\lambda}(x) = \min_{u \in \mathbb R^{\widetilde{N}}} \widetilde{g}_\lambda(u) + \frac\mu2 \|x-u\|^2.
\end{equation*}
According to~\cite[Prop.~12.30]{bauschke2011convex}, ${^\mu\widetilde{g}_\lambda}$ is $\mu$-Lipschitz-differentiable, with gradient given by
\begin{equation*}
    (\forall x \in \mathbb R^{\widetilde{N}})\quad
    \nabla {^\mu\widetilde{g}_\lambda}(x) = \mu \big( x - \prox_{\widetilde{g}_{\frac\lambda\mu}}(x) \big).
\end{equation*}
Hence, Problem~\eqref{pb:min:gen-bilevel-moreau} can be solved using a gradient descent algorithm, given by
\begin{equation}    \label{algo:GD-moreau}
    \begin{array}{l}
        x_0\in \Rset^N, u_0 \in \Rset^N,\\
       \text{for } k = 0, 1, \ldots \\
       \left\lfloor
       \begin{array}{l}
        x_{k+1} =  x_k - \tau \nabla_x F( x_k, u_k) , \\
        u_{k+1} = \prox_{\widetilde{g}_{\frac\lambda\mu}}(x_{k+1}), 
       \end{array}
       \right.
    \end{array}
\end{equation}
where, for every $(x,u) \in \mathbb{R}^{\widetilde{N}} \times \mathbb{R}^{\widetilde{N}}$, 
\begin{equation}\label{eq:def:F-bi}
    {F(x,u) =} \widetilde{f}(x) + \widetilde g_\lambda(u) + \frac\mu2\|x - u\|^2,
\end{equation}
and
\begin{equation}\label{eq:def:F-bi-grad}
    \nabla_x F(x,u) = \nabla \widetilde{f}(x) + \mu (x-u)
\end{equation}
is Lipschitz-differentiable, with constant $\widetilde{\beta} + \mu$.
The sequence ${(x_k)}_{k \in \mathbb N}$ generated by algorithm~\eqref{algo:GD-moreau} is then ensured to converge to the solution $x^\ddagger$ to~\eqref{pb:min:gen-bilevel-moreau}, if $0 < \tau < 2 {(\widetilde{\beta} + \mu)}^{-1}$.

In practice, as the proximity operator of $\widetilde{g}_{\frac{\lambda}{\mu}}$ often does not have a closed form, the computational complexity of this algorithm is too high.
However, the solution of the proximity operator can be computed using iterative schemes such as the FB or the dual-FB algorithms, as described in Sections~\ref{Sec:AD-PnP} and~{Sec:SD-PnP}.
Using a bi-level framework similar to the one proposed by~\cite{dagreou2022framework}, we show that one can use inexact proximity operator computation powered by iterative proximal algorithms with warm restarts to build an algorithm that converges to a solution to~\eqref{pb:min:gen-bilevel-moreau}.
In this context, we then focus on the following algorithm
\begin{equation}    \label{algo:algo_bi_level_prox}
    \begin{array}{l}
        x_0\in \Rset^{\widetilde{N}}, u_0 \in \Rset^{\widetilde{N}}x,\\
       \text{for } k = 0, 1, \ldots \\
       \left\lfloor
       \begin{array}{l}
        x_{k+1} =  x_k - \tau \nabla_x F( x_k, u_k) , \\
        u_{k+1} = \widetilde{G}_{L, \lambda\mu^{-1}} (x_{k+1}, u_k),
       \end{array}
       \right.
    \end{array}
\end{equation}
where, for every $k\in \mathbb N$, $\widetilde{G}_{L, \lambda\mu^{-1}} \colon \mathbb R^{\widetilde{N}} \times \mathbb R^{\widetilde{N}} \to \mathbb R^{\widetilde{N}}$ is an operator that aims to approximate $\prox_{ \widetilde{g}_{\frac{\lambda}{\mu}}}(x_{k+1})$ by computing $L$ sub-iterations of some iterative algorithm. The first input corresponds to the output $x_{k+1}$ of the gradient descent step, \emph{i.e.} the point at which the proximity step should be computed. The second input corresponds to the output of $\widetilde{G}_{L, \lambda\mu^{-1}}$ from the previous iteration $k-1$, which is aimed to be used for warm-restart, i.e. initializing the sub-iterations to compute the approximation of $\prox_{ \widetilde{g}_{\frac{\lambda}{\mu}}}(x_{k+1})$.
Unlike in previous sections, in this section, this operator is not necessarily based on FB or dual-FB iterations. However, we will assume that it satisfies some sufficient decrease property (see Theorem~\ref{thm:bilevel} Condition~\ref{thm:bilevel:cond:i}).

\begin{remark}
    It can be noticed that the regularized problem~\eqref{pb:min:gen-bilevel-moreau} can equivalently be rewritten as the following bi-level optimization problem
    \begin{equation}\label{pb:min:bilevel}
        \text{find }  x^\ddagger = 
            \argmind{x \in \mathbb{R}^{\widetilde{N}}} 
                F(x, \pgx{x})
                \quad \text{such that} \quad \pgx{x} = \prox_{\widetilde{g}_{\frac{\lambda}{\mu}}}(x),
    \end{equation}
    where $F$ is defined in \eqref{eq:def:F-bi} and $\mu>0$. 
    Hence, the sequence ${(x_k)}_{k\in \mathbb N}$ generated by~\eqref{algo:GD-moreau} is ensured to converge to a solution to~\eqref{pb:min:bilevel} if $0< \tau < 2{(\widetilde{\beta} + \mu)}^{-1}$.
\end{remark}

In the following result, we analyse the asymptotic behavior of algorithm~\eqref{algo:algo_bi_level_prox}. The two conditions on $\widetilde{G}_{L, \lambda\mu^{-1}}$ and on the step size $\tau$ are commented in Remark~\ref{rmk:bilev}.

\begin{theorem}\label{thm:bilevel}
    Let ${(x_k)}_{k \in \mathbb N}$ and ${(u_k)}_{k \in \mathbb N}$ be sequences generated by~\eqref{algo:algo_bi_level_prox}. 
    Assume that the following conditions hold:
    \begin{enumerate}
        \item\label{thm:bilevel:cond:i}
        there exists $\alpha_L \in ]0, 1/\sqrt{2}[$ such that
        \begin{equation} \label{thm:bilev:ass}
            (\forall k \in \mathbb N)\quad
            \| \widetilde{G}_{L, \lambda\mu^{-1}}(x_{k+1}, u_k) - \pgx{x_{k+1}} \| \le \alpha_L \| u_k - \pgx{x_{k+1}} \|,
        \end{equation}
        where $\pgx{x_{k+1}} = \prox_{\widetilde{g}_{\frac{\lambda}{\mu}}}(x_{k+1})$, 
        \item\label{thm:bilevel:cond:ii}
        the step-size $\tau$ in~\eqref{algo:algo_bi_level_prox} satisfies $0 < \tau < \frac{2\phi^\ddagger(1 - 2\alpha_L^2)}{\mu^2}$, where $\phi^\ddagger>0$ is the positive root of the polynomial 
        \begin{equation}\label{thm:bilevel:cond:pol}
        \phi \in \Rset^* \mapsto p(\phi) = (8\alpha_L^2(1 - 2\alpha_L^2))\phi^2 + 2(\widetilde{\beta} + \mu)(1-2\alpha_L^2)\phi - \mu^2.    
        \end{equation}
    \end{enumerate}
    Then, ${(x_k)}_{k\in \mathbb N}$ converges to a solution to~\eqref{pb:min:bilevel}.
\end{theorem}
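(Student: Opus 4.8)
The plan is to read algorithm~\eqref{algo:algo_bi_level_prox} as an \emph{inexact gradient descent} on the smooth convex function $h\colon x\mapsto \widetilde f(x) + {^\mu\widetilde g_\lambda}(x) = F(x,\pgx{x})$, absorbing the inexactness into a Lyapunov function. By the discussion preceding the statement, $h$ is convex and $\beta_h$-Lipschitz differentiable with $\beta_h := \widetilde\beta+\mu$ and $\nabla h(x) = \nabla_x F(x,\pgx{x}) = \nabla\widetilde f(x) + \mu(x-\pgx{x})$ (combining~\eqref{eq:def:F-bi-grad} with the gradient of $^\mu\widetilde g_\lambda$), and $\Argmind{x\in\Rset^{\widetilde N}} h(x)$ coincides with the solution set of~\eqref{pb:min:bilevel}, assumed nonempty; in particular $h$ is bounded below. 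Writing $w_k := \nabla_x F(x_k,u_k)$ and $d_k := \norm{u_k - \pgx{x_k}}$, the first line of~\eqref{algo:algo_bi_level_prox} reads $x_{k+1} = x_k - \tau w_k$ with $w_k = \nabla h(x_k) + e_k$, $e_k := \mu(u_k-\pgx{x_k})$, so $x_{k+1}$ is a gradient step on $h$ perturbed by an error of norm $\norm{e_k} = \mu d_k$; everything hinges on controlling $(d_k)_{k\in\Nset}$.

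First I would combine two inequalities. Condition~\ref{thm:bilevel:cond:i} and the nonexpansiveness of $\prox_{\widetilde g_{\frac\lambda\mu}}$ give $d_{k+1} \le \alpha_L\norm{u_k - \pgx{x_{k+1}}} \le \alpha_L(d_k + \norm{x_{k+1}-x_k})$, whence $d_{k+1}^2 \le 2\alpha_L^2 d_k^2 + 2\alpha_L^2\tau^2\norm{w_k}^2$ (using $(a+b)^2\le 2a^2+2b^2$ and $\norm{x_{k+1}-x_k} = \tau\norm{w_k}$). Separately, the descent lemma for $h$ at $x_{k+1} = x_k-\tau w_k$, with $\nabla h(x_k) = w_k - e_k$ and Young's inequality, gives $h(x_{k+1}) \le h(x_k) + \tfrac\tau2(\beta_h\tau - 1)\norm{w_k}^2 + \tfrac{\tau\mu^2}{2}d_k^2$. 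Adding $\phi^\ddagger$ times the first to the second, for $\phi^\ddagger>0$ the positive root of $p$ in~\eqref{thm:bilevel:cond:pol} (which exists since $p$ has positive leading coefficient and $p(0)=-\mu^2<0$), and setting $\Psi_k := h(x_k) + \phi^\ddagger d_k^2$, yields
\[
\Psi_{k+1}-\Psi_k \le \tfrac\tau2\big[\tau(\beta_h + 4\alpha_L^2\phi^\ddagger) - 1\big]\norm{w_k}^2 + \Big[\tfrac{\tau\mu^2}{2} - \phi^\ddagger(1-2\alpha_L^2)\Big]d_k^2 .
\]
The crucial observation is that $p(\phi^\ddagger)=0$ factors as $2(1-2\alpha_L^2)\phi^\ddagger(\beta_h + 4\alpha_L^2\phi^\ddagger) = \mu^2$, i.e.\ $\tfrac{2\phi^\ddagger(1-2\alpha_L^2)}{\mu^2} = \tfrac{1}{\beta_h + 4\alpha_L^2\phi^\ddagger}$, so Condition~\ref{thm:bilevel:cond:ii} says exactly that $\tau$ lies strictly below this common value; hence both brackets above are negative, and there exist $c_1,c_2>0$ with $\Psi_{k+1} \le \Psi_k - c_1\norm{x_{k+1}-x_k}^2 - c_2 d_k^2$ for all $k$.

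It follows that $(\Psi_k)_k$ is non-increasing and bounded below by $\min_x h(x)$, hence convergent, and summing gives $\sum_k\norm{x_{k+1}-x_k}^2<\infty$ and $\sum_k d_k^2<\infty$; in particular $d_k\to 0$ and $\norm{\nabla h(x_k)} \le \norm{w_k} + \mu d_k = \tau^{-1}\norm{x_{k+1}-x_k} + \mu d_k \to 0$. To upgrade this to convergence of $(x_k)_k$ I would argue by Fejér monotonicity: for a minimizer $x^\ddagger$ of $h$, expanding $\norm{x_{k+1}-x^\ddagger}^2$ and using convexity ($\langle\nabla h(x_k),x_k-x^\ddagger\rangle \ge h(x_k)-h(x^\ddagger)\ge 0$) together with the Baillon--Haddad cocoercivity of $\nabla h$, one gets a quasi-Fej\'er estimate $\norm{x_{k+1}-x^\ddagger}^2 \le \norm{x_k-x^\ddagger}^2 + \varepsilon_k - \delta_k$ with $\delta_k\ge 0$; combined with the fact that every cluster point of $(x_k)_k$ is a minimizer of $h$ (since $\nabla h(x_k)\to 0$ and $\nabla h$ is continuous), the Opial property then delivers $x_k\to x^\ddagger$. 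The main obstacle is precisely this last step: the descent only produces $\sum_k d_k^2<\infty$, not $\sum_k d_k<\infty$, so the perturbation $\varepsilon_k$ coming from $e_k$ is merely square-summable, and closing the Fej\'er/Opial argument requires extra care — for instance by first establishing boundedness of $(x_k)_k$, or via a refined quasi-Fej\'er inequality in the spirit of~\cite{Combettes2005} and of the bi-level analysis of~\cite{dagreou2022framework}.
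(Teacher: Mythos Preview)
Your Lyapunov argument is essentially identical to the paper's: the paper introduces exactly $\mathcal L_k = h(x_k)+\phi\,\|u_k-\pgx{x_k}\|^2$, derives the same two ingredients (descent lemma on $h$, and the contraction $\|u_{k+1}-\pgx{x_{k+1}}\|^2 \le 2\alpha_L^2\|u_k-\pgx{x_k}\|^2 + 2\alpha_L^2\tau^2\|\nabla_x F(x_k,u_k)\|^2$ via~\ref{thm:bilevel:cond:i} and nonexpansiveness of the prox), and then observes that $p(\phi^\ddagger)=0$ is precisely the equality $\tfrac{2\phi^\ddagger(1-2\alpha_L^2)}{\mu^2}=\tfrac{1}{\widetilde\beta+\mu+4\alpha_L^2\phi^\ddagger}$ that makes both coefficients negative under~\ref{thm:bilevel:cond:ii}. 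Up to telescoping to obtain $\nabla h(x_k)\to 0$, you match the paper step for step.

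Where you diverge is the last paragraph. The paper does \emph{not} attempt a Fej\'er/Opial argument and therefore never needs $\sum_k d_k<\infty$. Instead it closes in two lines: it asserts that $h$ is convex and coercive, so the boundedness of $(h(x_k))_k$ (from the decreasing Lyapunov) forces $(x_k)_k$ to be bounded; since $\nabla h$ is continuous and $\nabla h(x_k)\to 0$, every cluster point is a minimizer of $h$; and since~\eqref{pb:min:bilevel} is written with ``$\operatorname{argmin}$'' (unique minimizer $x^\ddagger$), the bounded sequence has $x^\ddagger$ as its only cluster point and hence converges to it. Your self-identified ``obstacle'' is thus an artifact of the harder route you chose; you can close the proof by the same boundedness/cluster-point argument without any summability of the errors. (Note that the paper's last step is itself terse---coercivity of $h$ is asserted rather than proved, and the passage ``all cluster points are minimizers $\Rightarrow$ convergence'' relies implicitly on uniqueness of the minimizer---so if you want a fully self-contained version you should make these two points explicit.)
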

\begin{proof}
Let $k \in \mathbb N$,
and let us define, for every $x \in \mathbb R^{\widetilde{N}}$, $h(x) = F(x, \pgx{x}) $, where $F$ and $\pgx{x} \in \mathbb R^{\widetilde{N}}$ are defined in~\eqref{eq:def:F-bi} and \eqref{pb:min:bilevel}, respectively.
According to~\eqref{eq:def:F-bi-grad}, since $\nabla_x F( \cdot, \pgx{x})$ is $(\widetilde{\beta}+\mu)$-Lipschitz, then $h$ is Lipschitz-differentiable, with constant $\widetilde{\beta}+ \mu>0$. Thus we can apply the descent lemma from~\cite[Prop.~A.24]{berstsekas99} to obtain
\begin{multline}\label{thm:bilevel:pr1}
    h(x_{k+1}) 
    =  h(x_k - \tau \nabla_x F(x_k, u_k))
    \le   h(x_k) - \tau \langle  \nabla_x F(x_k, u_k)), \nabla h(x_k) \rangle  \\
    + \frac{(\widetilde{\beta}+\mu) \tau^2}{2} \| \nabla_x F(x_k, u_k)) \|^2 .
\end{multline}
Further, we have
\begin{multline}\label{thm:bilevel:pr2}
\frac12 \|\nabla_x F(x_k, u_k) - \nabla h(x_k)\|^2
 = \frac12 \|\nabla_x F(x_k, u_k)\|^2 + \frac12 \|\nabla h(x_k)\|^2 \\
 - \langle  \nabla_x F(x_k, u_k)), \nabla h(x_k) \rangle.
\end{multline}
Then, by combining~\eqref{thm:bilevel:pr1} and~\eqref{thm:bilevel:pr2}, we obtain
\begin{align}
    h(x_{k+1}) 
    &\le   h(x_k) 
    - \frac{\tau}{2} \|\nabla_x F(x_k, u_k)\|^2 - \frac{\tau}{2} \|\nabla h(x_k)\|^2  + \frac{\tau}{2} \|\nabla_x F(x_k, u_k) - \nabla h(x_k)\|^2 \nonumber \\
    & \qquad\qquad\qquad
    + \frac{(\widetilde{\beta}+\mu) \tau^2}{2} \| \nabla_x F(x_k, u_k)) \|^2 \nonumber \\
    &=   h(x_k) 
    - \frac{\tau}{2} \|\nabla h(x_k)\|^2 
    - \frac{\tau \big(1 - (\widetilde{\beta}+\mu) \tau \big)}{2} \|\nabla_x F(x_k, u_k)\|^2 \nonumber \\
    & \qquad\qquad\qquad
    + \frac{\tau}{2} \|\nabla_x F(x_k, u_k) - \nabla h(x_k)\|^2 . \label{thm:bilevel:pr3}
\end{align}
Furthermore, by definition of $F$ and $h$, we have
\begin{align*}
    \nabla_x F(x_k, u_k) - \nabla h(x_k)
    & = \nabla_x \Big( F(x_k, u_k) - F(x_k, \pgx{x_k}) \Big) 
    = \mu (x_k - u_k) - \mu (x_k - \pgx{x_k})  \\
    &=  \mu (\pgx{x_k} - u_k).
\end{align*}
Hence, combining this equality with~\eqref{thm:bilevel:pr3}, we obtain
\begin{equation}
h(x_{k+1}) -h(x_k) 
\le - \frac{\tau}{2} \|\nabla h(x_k)\|^2 
    - \frac{\tau \big(1 - (\widetilde{\beta}+\mu)  \tau \big)}{2} \|\nabla_x F(x_k, u_k)\|^2 + \frac{\tau \mu^2}{2} \|\pgx{x_k} - u_k\|^2 .\label{thm:bilevel:pr3b}
\end{equation}

According to~\eqref{algo:algo_bi_level_prox} and to~\eqref{thm:bilev:ass}, we have
\begin{align}
    \| u_{k+1} - \pgx{x_{k+1}} \|
    &   \le \alpha_L \| u_k - \pgx{x_{k+1}} \|
        = \alpha_L \| (u_k - \pgx{x_k}) + (\pgx{x_k} - \pgx{x_{k+1}}) \| \nonumber\\
    &   \le \alpha_L \| u_k - \pgx{x_k} \| + \alpha_L \|\pgx{x_k} - \pgx{x_{k+1}} \|. \label{thm:bilevel:pr4}
\end{align}
Since the proximity operator is 1-Lipschitz~\cite{bauschke2011convex}, we have
\begin{equation}
    \|\pgx{x_k} - \pgx{x_{k+1}} \|
    \le \| x_k - x_{k+1} \| = \tau \| \nabla_x F(x_k, u_k) \|, \label{thm:bilevel:pr5}
\end{equation}
where the last equality is obtained by definition of $x_{k+1}$ in~\eqref{algo:algo_bi_level_prox}.
By combining~\eqref{thm:bilevel:pr4} and~\eqref{thm:bilevel:pr5}, we have
\begin{equation*}
    \| u_{k+1} - \pgx{x_{k+1}} \| 
    \le \alpha_L \| u_k - \pgx{x_k} \| + \alpha_L \tau \| \nabla_x F(x_k, u_k) \|.
\end{equation*}
Then, by squaring the above inequality, 
and by applying the Jensen's inequality we obtain
\begin{equation}\label{thm:bilevel:pr6}
    \| u_{k+1} - \pgx{x_{k+1}} \|^2 
    \le 2\alpha_L^2 \| u_k - \pgx{x_k} \|^2 
    + 2\alpha_L^2 \tau^2 \| \nabla_x F(x_k, u_k) \|^2.
\end{equation}

\smallskip
For $\phi > 0$, we introduce the Lyapunov function 
\begin{equation}\label{thm:bilevel:pr-lyap}
    (\forall k \in \mathbb N) \quad \mathcal L_k = h(x_k) + \phi\|u_k - \pgx{x_k}\|^2
\end{equation}
and we will show that ${(\mathcal{L}_k)}_{k \in \mathbb N}$ is a decreasing sequence.
By definition of $\mathcal L_k$, and using~\eqref{thm:bilevel:pr3b} and~\eqref{thm:bilevel:pr6}, we obtain
\begin{align}
\mathcal{L}_{k+1} - \mathcal{L}_{k} 
    &   = h(x_{k+1}) - h(x_k) + \phi \big(\|u_{k+1} - \pgx{x_{k+1}}\|^2 - \|u_k - \pgx{x_k}\|^2\big) \nonumber\\
    &   \le -\frac{\tau}{2} \|\nabla h(x_k)\|^2 
        - \frac{\tau \big( 1 - (\widetilde{\beta}+\mu)  \tau \big)}{2} \|\nabla_x F(x_k, u_k)\|^2 + \frac{\tau \mu^2}{2} \|\pgx{x_k} - u_k\|^2  \nonumber\\
    & \qquad\qquad 
        + \phi \Big((2\alpha_L^2 -1) \| u_k - \pgx{x_k} \|^2 
        + 2\alpha_L^2 \tau^2 \| \nabla_x F(x_k, u_k) \|^2\Big) \nonumber\\
    &   \le  -\frac{\tau}{2} \|\nabla h(x_k)\|^2 - \tau \Big( \frac{1 - (\widetilde{\beta}+\mu)  \tau}{2} - \phi 2\alpha_L^2 \tau \Big)  \|\nabla_x F(x_k, u_k)\|^2 \nonumber\\
    &\qquad\qquad 
        - \Big( \phi(1 - 2\alpha_L^2) - \frac{\tau \mu^2}{2} \Big) \| u_k - \pgx{x_k} \|^2
    \nonumber 
\end{align}
If $\tau>0$, $\frac{1 - (\widetilde{\beta}+\mu)  \tau}{2} - \phi 2\alpha_L^2 \tau >0$, and $\phi(1 - 2\alpha_L^2) -
\frac{\tau \mu^2}{2} >0$, then ${(\mathcal{L}_k)}_{k \in \mathbb N}$ is a decreasing sequence.
Hence $0 < \alpha_L < \frac{1}{\sqrt{2}}$, and $\tau$ must satisfy $0 < \tau < \min \Big\{ \frac{2\phi(1 - 2\alpha_L^2)}{\mu^2}, \frac{1}{\widetilde{\beta} + \mu + 4\phi\alpha_L^2} \Big\}$. 
We thus can choose $\phi>0$ to maximize the upper bound $\min \Big\{ \frac{2\phi(1 - 2\alpha_L^2)}{\mu^2}, \frac{1}{\widetilde{\beta} + \mu + 4\phi\alpha_L^2} \Big\}$, \textit{i.e.}, such that $\frac{2\phi(1-2\alpha_L^2)}{\mu^2} = \frac{1}{\widetilde{\beta} + \mu + 4\phi\alpha_L^2}$.
This is equivalent to find the positive root of polynomial~\eqref{thm:bilevel:cond:pol}.
The determinant of this polynomial is positive with a positive solution, as it is negative in $\phi=0$ and positive in
$\phi \to \infty$. By denoting $\phi^\ddagger$ the positive root of this polynomial, we then obtain that if $0< \tau <
\frac{2\phi^\ddagger(1 - 2\alpha_L^2)}{\mu^2}$, then ${(\mathcal{L}_k)}_{k\in \mathbb N}$ is decreasing, and that
\begin{equation}
     \frac{\tau}{2} \|\nabla h(x_k)\|^2  \le \mathcal{L}_{k} - \mathcal{L}_{k+1} .
     \label{thm:bilevel:pr8}
\end{equation}
Hence, by summing~\eqref{thm:bilevel:pr8} over $k \in \{0, \ldots, K-1\}$, for $K>0$, we obtain $\sum_{k=0}^K \frac{\tau}{2} \|\nabla h(x_k)\|^2  \le \mathcal L_0 - \mathcal L_K < +\infty$.
Thus ${(\|\nabla h(x_k)\|^2)}_{k\in \mathbb N}$ converges to $0$.

Since $h$ is convex, coercive (see \cite[Cor.~11.16 \&~11.17]{bauschke2011convex}), and has a continuous gradient, the sequence $(x_k)_{k\in\mathbb{N}}$ is bounded and satisfies $\nabla h(x_k) \to 0$. Therefore, all its cluster points are minimizers of $h$, and $(x_k)$ converges to a solution to~\eqref{pb:min:bilevel}.
\end{proof}

\begin{remark}\ \label{rmk:bilev}
    \begin{enumerate}
        \item 
        Condition~\ref{thm:bilevel:cond:i} in Theorem~\ref{thm:bilevel} means that the operator $\widetilde{G}_{L, \lambda\mu^{-1}}$ in~\eqref{algo:algo_bi_level_prox} must be chosen to satisfy some sufficient approximation condition in the sense that, for every $k\in \mathbb N$, $u_{k+1} = \widetilde{G}_{L, \lambda\mu^{-1}}(x_{k+1}, u_k)$ should be closer to $\prox_{\widetilde{g}_{\frac{\lambda}{\mu}}}(x_{k+1})$ than $u_k$ (i.e., the initialization of that operator). \\ 
        In practice, if we choose a monotone algorithm such as FB or dual-FB, this condition can be satisfied for an $L$ sufficiently large (due to decreasing properties on the associated objective functions). However, it can be difficult to determine the value of $L$ ensuring $\alpha_L < 1/\sqrt{2}$.
        
        \item 
        For $(x,u) \in (\Rset^N)^2$, we define $\bar{G}_\lambda(x, u) = \frac12\|x - u\|^2 + \widetilde{g}_\lambda(u)$. 
        We consider the first $L$ iterates generated by an algorithm designed to estimate $\pgx{x} = \prox_{\widetilde{g}_{\lambda}}(x) = \argmind{u} \bar{G}_\lambda(x, u)$, that is initialized using a warm-starting strategy, i.e., with $u_\ell$.
        Assume that the $L$-th estimate $u_{\ell + L}$ satisfies
        \begin{equation*}
            \bar{G}_\lambda(x, u_{\ell+L}) - \bar{G}_\lambda(x, \pgx{x}) \le \frac{C}{L}\|u_\ell - \pgx{x}\|^2
        \end{equation*} 
        where $C>0$ is a constant that does not depend on $x$. 
        Then, since $\widetilde{g}_\lambda$ is assumed to be convex, $\bar{G}_{\lambda}(x, \cdot)$ is $1$-strongly convex \cite[Prop.~10.8]{bauschke2011convex}, and we have 
        \begin{equation*}
            \|u_{\ell+L} - \pgx{x}\|_2^2 \le \frac{2 C}{L} \|u_\ell - \pgx{x}\|^2.
        \end{equation*}
        In this case, $L$ should be chosen larger than $4  C$ to satisfy assumption~\eqref{thm:bilev:ass}.
        Note that this is typically the case when $g$ is the SD formulation and we use FB as an inner-solver~\cite{BT09}.
        
        \item 
        When $\alpha_L$ is unknown, the root of the polynomial~\eqref{thm:bilevel:cond:pol} cannot be computed. In this case, one can choose $\phi =1$ in the Lyapunov function~\eqref{thm:bilevel:pr-lyap}, and \linebreak ${0 < \tau < \min \Big\{ \frac{2(1 - 2\alpha_L^2)}{\mu^2}, \frac{1}{\widetilde{\beta} + \mu + 4\alpha_L^2} \Big\}}$.
        Since $0 < \alpha_L^2 < 1/2$, if $\tau$ is chosen to satisfy ${0 < \tau < \min \big\{ 2/\mu^2, 1/(\widetilde \beta + \mu) \big\}}$, then ${(x_k)}_{k\in \mathbb N}$ generated by algorithm~\eqref{algo:algo_bi_level_prox} converges to a solution to~\eqref{pb:min:bilevel}. \\
        In practice, since $\mu$ is the smoothing parameter, it is often chosen very small. In this context, we recover a similar condition as for the convergence of~\eqref{algo:GD-moreau}, that is $0 < \tau < 1/(\mu + \widetilde \beta)$.
    \end{enumerate}
\end{remark}


\section{Numerical experiments}
\label{Sec:exp}
We investigate the behavior of algorithm~\eqref{algo:FB-PnP}, in the settings described in Section~\ref{Sec:AD-PnP} and Section~\ref{Sec:SD-PnP}, i.e. with $G$ built as sub-iterations for solving either the analysis or the synthesis Gaussian denoising problem. 

In~Section~\ref{subsec:pnp-results} we study the stability of the FB-PnP iterations described in~Section~\ref{Sec:AD-PnP} and Section~\ref{Sec:SD-PnP}. In particular, we aim to illustrate results presented in Theorem~\ref{prop:analysis_warm_restart} and in Theorem~\ref{thm:cvg-SF-L1}, for the analysis and synthesis denoisers (AD and SD, respectively). This study is conducted on a toy compressed sensing example. 

In~Section~\ref{subsec:deblur} we will investigate the behavior of the proposed approaches in a deep dictionary learning (DDL) framework, where the linear operators $\Dica$ and $\Dics$ are learned. We will apply the resulting DDL methods to a deblurring image problem, and for the sake of completeness we will show that they are very competitive with more advanced PnP methods where the denoiser is a full neural network (namely DRUnet).

Codes to reproduce the results presented in this section are available on \href{https://github.com/tomMoral/dictionary-based_denoisers_FBPnP}{GitHub}.


\subsection{Stability analysis of the FB-PnP iterations}
\label{subsec:pnp-results}

In this section we consider a simple compressive sensing example, and we aim to analyze the stability of the FB-PnP iterations when using the proposed AD or SD, depending on the number of layers $L$. Specifically, we want to illustrate the results presented in Theorem~\ref{prop:analysis_warm_restart} and in Theorem~\ref{thm:cvg-SF-L1}. Although these results have only been shown for $L=1$ or $L\to \infty$ (i.e., the proximity operators are computed accurately), we will also empirically investigate the behavior of the AD and SD when $1 < L < +\infty$.
To this aim, we consider a toy example consisting of an equation system of the form of $y = A \overline{x}$, where
$\overline{x} \in \Rset^N$ is a vector of dimension $N=50$ generated randomly following a uniform distribution,
$A \colon \Rset^N \to \Rset^M$ is a Gaussian measurement operator with $M=20$, where coefficients are generated randomly following a Gaussian distribution with mean $0$ and standard deviation $1$. 
Both the analysis and synthesis dictionaries are generated as Gaussian dictionaries with mean $0$ and standard deviation $1$, of size $\Dica \in \Rset^{100 \times 50}$ and $\Dics \in \Rset^{50 \times 100}$, respectively.


\subsubsection{Analysis denoiser}
\label{subsec:convergence_L_analysis}

\begin{figure}[b!]
    \centering
    \includegraphics[width=\textwidth]{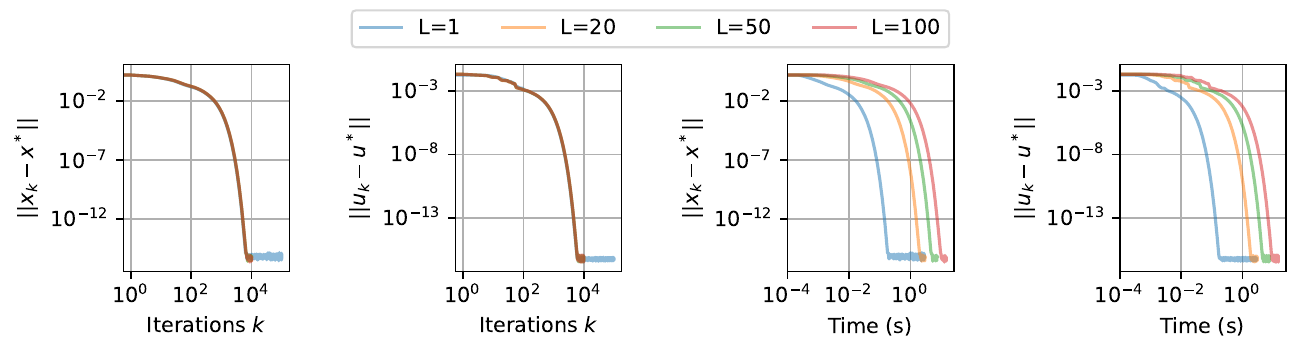}

    \vspace{-0.5cm}
    \caption{Convergence profile of algorithm~\eqref{algo:FB-pnp-AD} with $L\in \{1, 20, 50, 100\}$. Solutions $(x^*, u^*)$ are pre-computed considering algorithm~\eqref{algo:FB-pnp-AD} with $L \to \infty$. 
    }
    \label{fig:convergence_analysis}
\end{figure}

In Theorem~\ref{prop:analysis_warm_restart} we showed that taking $L=1$ in algorithm~\eqref{algo:FB-pnp-AD} is equivalent to using a primal-dual algorithm, namely the Loris-Verhoeven algorithm \cite{loris2011generalization}. In particular, the resulting algorithm, given in \eqref{algo:FB-pnp-ana-1}, leads to the same asymptotic solution as when taking $L\to\infty$ in algorithm~\eqref{algo:FB-pnp-AD}. In this section we aim to illustrate this result by comparing the output of algorithm~\eqref{algo:FB-pnp-ana-1} (i.e., $L=1$) with the output of algorithm~\eqref{algo:FB-pnp-AD} for $L=10^4$. 
Furthermore, although the results from Theorem~\ref{prop:analysis_warm_restart} only hold for $L=1$, we show empirically that they remain true for $1 < L < +\infty$ for our problem by considering $L \in \{20, 50, 100\}$.

In Figure~\ref{fig:convergence_analysis} we show 
the behavior of $(\|x_k - x^*\|)_{k \in \mathbb N}$ and $(\|u_k - u^*\|)_{k \in \mathbb N}$ with $(x_k, u_k)_{k\in \mathbb N}$ generated by algorithm~\eqref{algo:FB-pnp-AD} with $L\in \{1, 20, 50, 100\}$, and $(x^*, u^*)$ generated by algorithm~\eqref{algo:FB-pnp-AD} with $L\to\infty$ (obtained after $10^4$ iterations). Note that $x^*$ is solution to problem~\eqref{pb:inv-min-AF}. 
We see that the trajectory per iteration is the same independently of the value of $L$. Hence, according to Theorem~\ref{prop:analysis_warm_restart} we can deduce that $(x^*, \tau^{-1} u^*)$ is solution to the saddle-point problem~\eqref{prop:analysis_warm_restart:SPP}.
We can further observe that the convergence is faster in time for smaller values of $L$.


\subsubsection{Synthesis denoiser}
\label{subsec:convergence_L_synthesis}

We showed in Theorem~\ref{thm:cvg-SF-L1} that taking $L=1$ in algorithm~\eqref{algo:FB-pnp-SD} enables solving \eqref{pb:inv-min-SF} as well as the direct sparse coding problem~\eqref{pb:min_gen-synth}. 
Since algorithm~\eqref{algo:FB-pnp-SD} for $L\to \infty$ solves \eqref{pb:inv-min-SF} (Theorem~\ref{thm:cvgce-pnp-limit-SD}), we deduce that both algorithm~\eqref{algo:FB-pnp-SD} for $L\to \infty$ and algorithm~\eqref{algo:FB-pnp-synthesis-1} (i.e., $L=1$) solve the same problem~\eqref{pb:inv-min-SF}.
In this section we aim to illustrate this result by comparing the output of algorithm~\eqref{algo:FB-pnp-synthesis-1} with the output of algorithm~\eqref{algo:FB-pnp-SD} for $L=10^4$. 
Furthermore, although the results from Proposition~\ref{prop:synthesis_as_prox_first} only hold for $L=1$, we show empirically that they remain true for $1 < L < +\infty$ for our problem by considering $L \in \{20, 50, 100\}$.

In Figure~\ref{fig:convergence_synthesis} we show 
the behavior of $(\|x_k - x^*\|)_{k \in \mathbb N}$ and $(\|z_k - z^*\|)_{k \in \mathbb N}$ with $(x_k, z_k)_{k\in \mathbb N}$ generated by algorithm~\eqref{algo:FB-pnp-SD} with $L\in \{1, 20, 50, 100\}$, and $(x^*, z^*)$ generated by algorithm~\eqref{algo:FB-pnp-SD} with $L\to\infty$ (obtained after $10^4$ iterations). Note that $x^*$ is solution to problem~\eqref{pb:inv-min-SF}. 
We see that the trajectory per iteration is fairly the same when $L \in \{20, 50, 1000\}$, while $L=1$ is taking a bit more iterations to reach convergence. 
However, for the convergence speed, all strategies seems to behave similarly, with $L\in \{1, 20\}$ being very slightly faster.

\begin{figure}
    \centering
    \includegraphics[width=\textwidth]{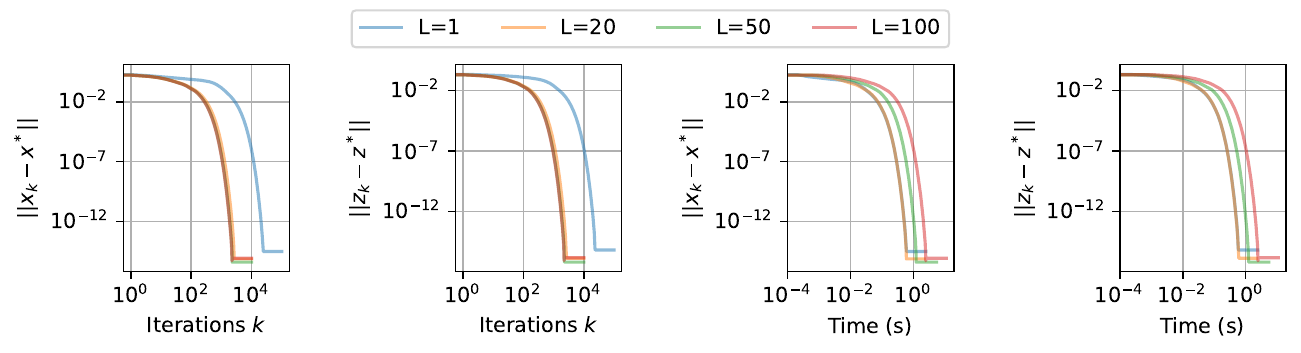}

    \vspace{-0.5cm}
    \caption{Convergence profile of algorithm~\eqref{algo:FB-pnp-SD} with $L\in \{1, 20, 50, 100\}$. Solutions $(x^*, u^*)$ are pre-computed considering algorithm~\eqref{algo:FB-pnp-SD} with $L \to \infty$. 
    }
    \label{fig:convergence_synthesis}
\end{figure}

\subsection{Application to image deblurring within a DDL framework}
\label{subsec:deblur}

We now consider an image restoration problem to illustrate results presented in Sections~\ref{Sec:AD-PnP} and~\ref{Sec:SD-PnP}.
Although the results presented in this work are generic and can be applied for any linear operators $\Dica$ and $\Dics$,
they are of particular interest in the DDL framework, where these operators are learned. 
We will explore this case in this section, and apply the resulting methods to an image deblurring problem. For the sake of completeness we will compare the resulting algorithms to a FB-PnP algorithm where the denoiser is a DRUnet.

\subsubsection{DDL context and training} 
\label{Ssec:exp:ddl-train}

As mentioned above, in this section, we focus on a deep learning approach, where the dictionaries $\Dica$ and $\Dics$ (see Section~\ref{Sec:AD-PnP} and Section~\ref{Sec:SD-PnP} for details) are learned as Gaussian denoisers, within a supervised context. 
Such an approach using unrolled algorithms for dictionary learning in computational imaging has been investigated in multiple works of the literature, either for the analysis formulation~\cite{chambolle2020learning, LPM20} or the synthesis formulation~\cite{TDB20, SEM19}, leading to the so-called DDL framework. 

In this section, we describe the training process for $\Dica$ and $\Dics$.
The objective is to find an estimate $x^\dagger \in \Rset^N$ of an original unknown signal $\overline{x} \in \Rset^N$ that has been corrupted by
some additive white Gaussian noise. Hence the noisy observed signal is given by
\begin{equation}\label{pb:inv-den}
    v = \overline{x} + {\varepsilon} w,
\end{equation}
where $w \in \mathbb R^N$ is a realization of an i.i.d. standard normal random variable, and ${\varepsilon}>0$ is the noise level. 
Both analysis and synthesis formulations to find the estimate of $\overline{x}$ from $v$ in \eqref{pb:inv-den} 
(see \eqref{pb:AF-intro} and \eqref{pb:SF-inner}, respectively)
have been extensively studied in the literature (see e.g.~\cite{peyre2011learning, YNGD13} for the analysis, and~\cite{Olshausen1997, AEB06, MBPS09} for the synthesis formulation). 
In the context of DDL, these problems can be reformulated as a joint estimation of the image and the dictionary, 
or as bi-level optimization problems~\cite{MBPS09}. Then, the dictionaries are defined as solutions to
\begin{align}
    \Dica^\dagger
    = \argmind{\Dica \in \Rset^{S \times N}}
    \ell^{\Am} \big( x^\dagger_{\Dica} \big)
    \quad &\text{ where } \quad
    x^\dagger_{\Dica} \text{ solution to~\eqref{pb:AF-intro}}
    \label{eqn:opt_analysis}
\end{align}
and
\begin{align}
    \Dics^\dagger
    = \argmind{\Dics \in \Rset^{N \times S}}
    \ell^{\Sm} \big( z^\dagger_{\Dics} \big)
    \quad &\text{ where } \quad
    z^\dagger_{\Dics}  \text{ solution to~\eqref{pb:SF-inner}}
    \label{eqn:opt_synthesis}
\end{align}
respectively, where $\ell^{\Am}$ and $\ell^{\Sm}$ are some loss functions.
In this context, the computation of the signal $x^\dagger_{\Dica}$ or the sparse codes $z^\dagger_{\Dics}$ is often referred to the \textit{inner problem}, while the global minimization for estimating the dictionary $\Dica$ or $\Dics$
is called the \textit{outer problem}. 

If in~\eqref{eqn:opt_analysis} (resp.~\eqref{eqn:opt_synthesis}), the loss function $\ell^{\Am}(\Dica)$ is the same as the one minimized in~\eqref{pb:AF-intro} (resp. if $\ell^{\Sm}(\Dics)$ is the same as the one minimized in~\eqref{pb:SF-intro}),
then the bi-level problem is equivalent to the DDL joint estimation problem. 
This choice is often used in an unsupervised setting.
In a supervised setting, where a ground truth dataset can  be used, a standard choice for $\ell^{\Am}$ and $\ell^{\Sm}$ is the
mean square error (MSE) loss, minimizing the $\ell^2$ norm of the difference between the ground truth and the estimated
signal $x^\dagger_{\Dica}$ or $x^\dagger_{\Dics}$. 
In the remainder, we will use this approach.
Specifically, we aim to learn $\Dica$ and $\Dics$ such that, for some initializations $u_0\in \Rset^S$ and $z_0\in \Rset^S$, we have
\begin{equation}\label{eq:def-unroll-learn}
    \begin{cases}
        x^\dagger_{\Am} = \NN^{\Am}_{L,\lambda, v}({u_0}) \approx \overline{x}, \\
        x^\dagger_{\Sm} = \NN^{\Sm}_{L,\lambda
        , v}({z_0}) \approx \overline{x}
    \end{cases}
\end{equation}
where $\NN^{\Am}$ and $\NN^{\Sm}$ are defined in Model~\ref{mod:AF} and Model~\ref{mod:SF}, respectively, for a fixed number of iterations $L\ge 1$, a regularization parameter $\lambda>0$, and input noisy image $v$ obtained as per~\eqref{pb:inv-den}. We assume that dictionaries $\Dica$ and $\Dics$ are parametrized by learnable parameters $\theta_{\Dica}$ and $\theta_{\Dics}$, respectively (i.e., convolution kernel).
Then, given a dataset of pairs of clean/noisy images ${(\overline{x}_i, v_i)}_{ i \in \mathbb{S}_{\mathcal{T}}}$ obtained following~\eqref{pb:inv-den}, the unrolled denoisers in \eqref{eq:def-unroll-learn} are trained by solving, respectively 
\begin{equation*}
    \minimize{\theta_{\Dica}} \frac{1}{\#(\mathbb{S}_{\mathcal{T}})} \! \sum_{i \in \mathbb{S}_{\mathcal{T}}} \! \| \NN^{\Am}_{\Ltrain, \lambda, v_i}(0_S) -  \overline{x}_i \|,
    \,\text{ and }\,
    \minimize{\theta_{\Dics}} \frac{1}{\#(\mathbb{S}_{\mathcal{T}})} \! \sum_{i \in \mathbb{S}_{\mathcal{T}}} \! \| \NN^{\Sm}_{\Ltrain, \lambda, v_i}(0_S) -  \overline{x}_i \|,
\end{equation*}
where the learnable parameter $\theta_{\Dica}$ (resp. $\theta_{\Dics}$) is associated with dictionary $\Dica$ (resp. $\Dics$) in $\NN^{\Am}_{\Ltrain, \lambda, v_i}$ (resp. $\NN^{\Sm}_{\Ltrain, \lambda, v_i}$), and $0_S \in \Rset^S$ is a constant vector with $0$ on all coefficients. 
For both the AD and the SD we choose the initialization $u_0 = z_0 =0_S$, which is a standard initialization as the dual variables should be sparse. 
We trained our models on patches of size $150 \times 150$ of images from the training dataset from the BSDS500 image bank
and we chose a noise level for the training of ${\varepsilon}=0.05$. 
The learnable parameters $\theta_{\Dica}$ and $\theta_{\Dics}$ correspond to $50$ convolutional filters of dimension $5 \times 5$, and we fix the regularization parameter $\lambda=10^{-2}$ and step sizes $\sigma = \frac{1.8}{||\Gamma||^2}$ in Model~\ref{mod:AF} and $\zeta=\frac{1.8}{||D||^2}$ in Model~\ref{mod:SF}. 
Since our study includes the cases of denoisers with either only $L=1$ iteration, or a large number of iterations, in our simulations we tested two strategies: training the denoisers with (i) only $\Ltrain=1$ iteration, or (ii) with $\Ltrain=20$ iterations sharing the same dictionary.

\smallskip

\textbf{Denoising performances -- }
In Table~\ref{tab:inference-time} we give the inference time for denoising an image, when applying the AD and SD with either $\Lden=1$ or $\Lden=20$ iterations. For comparison, we also provide the inference time when using a DRUnet, where the weights have been taken from the \href{https://deepinv.github.io/deepinv/}{DeepInv} PyTorch library. 
We further provide examples of output images for the different denoisers in Figure~\ref{fig:denoise-example}, evaluated in different configurations. It can be observed that the cases where SD is evaluated with $\Lden=1$ iteration (SD train $\Ltrain=\{1,20\}$ and $\Lden=1$) lead to very poor results. When SD is trained with $\Ltrain=1$ and evaluated with $\Lden=\{20,1000\}$ iterations, results start to improve, although remaining quite poor. 
All other cases look very similar, with DRUnet giving best denoising results.

\begin{table}
    \centering
    \caption{\label{tab:inference-time}
    Average time and standard deviation when applying different denoisers. 
    Second row shows time to evaluate the denoisers with either $\Lden=1$ or $\Lden=20$, computed over $150$ images from BSDS500 cropped to $150 \times 150$, with noise level ${\varepsilon}=0.05$. 
    Third row shows time to run $1,000$ iterations of the PnP with either $\Lpnp=1$ or $\Lpnp=20$, computed over $4$ examples.} 
    \footnotesize
    \arrayrulecolor{black}
    \begin{tabular}{l||rr|rr|r}
        \toprule
        \multirow{ 2}{*}{Denoiser}
        & \multicolumn{2}{c|}{AD}
        & \multicolumn{2}{c|}{SD}
        & \multirow{ 2}{*}{DRUNet} \\
        & 1 iteration & 20 iterations
        & 1 iteration & 20 iterations &
        \\
        \hline
        Average time per evaluation 
        & \multirow{ 2}{*}{$0.19  (\pm 0.08)$} & \multirow{ 2}{*}{$3.94  (\pm 1.46)$} 
        & \multirow{ 2}{*}{$0.27  (\pm 0.16)$}  & \multirow{ 2}{*}{$3.12  (\pm 0.96)$} 
        & \multirow{ 2}{*}{$5.32  (\pm 0.76)$}
        \\
        \centering $\times 10^{-2}$ sec. ($\pm$std) && && \\
        \hline
        Average time for PnP run
        & \multirow{ 2}{*}{$1.32  (\pm 0.03)$} & \multirow{ 2}{*}{$31.07  (\pm 0.06)$}  
        & \multirow{ 2}{*}{$1.18  (\pm 0.02)$}  & \multirow{ 2}{*}{$18.57  (\pm 0.06)$} 
        & \multirow{ 2}{*}{$49.24  (\pm 0.19)$}
        \\
        \centering ($\pm$std) && && \\
        \bottomrule
    \end{tabular}

\end{table}

\begin{figure}
    \centering\scriptsize\setlength\tabcolsep{0.05cm}
    \begin{tabular}{cccccc}
        & $\phantom{-}$ &  \multicolumn{2}{c}{SD with $\Ltrain=1$} & \multicolumn{2}{c}{SD with $\Ltrain=20$} \\[-0.05cm]
        $\overline{x}$  
        & $\phantom{.}$ 
        &   $\Lden=1$ & $\Lden=20$ 
        & $\Lden=1$ & $\Lden=20$ \\
        \includegraphics[width=2.3cm, trim={2.6cm 0.5cm 2.6cm 0.5cm}, clip]{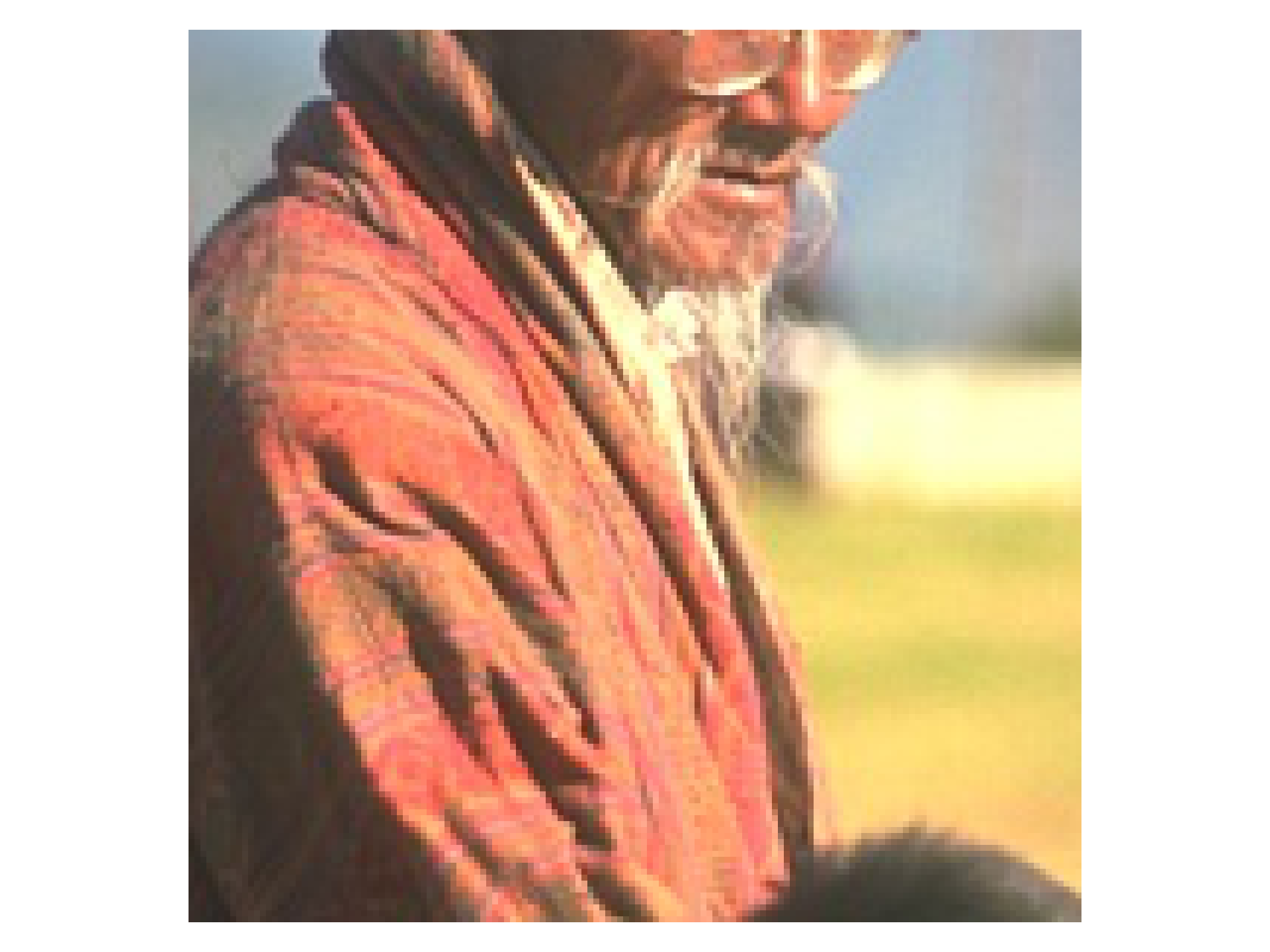}
        & $\phantom{.}$ 
        &   \includegraphics[width=2.3cm, trim={2.6cm 0.5cm 2.6cm 0.5cm}, clip]{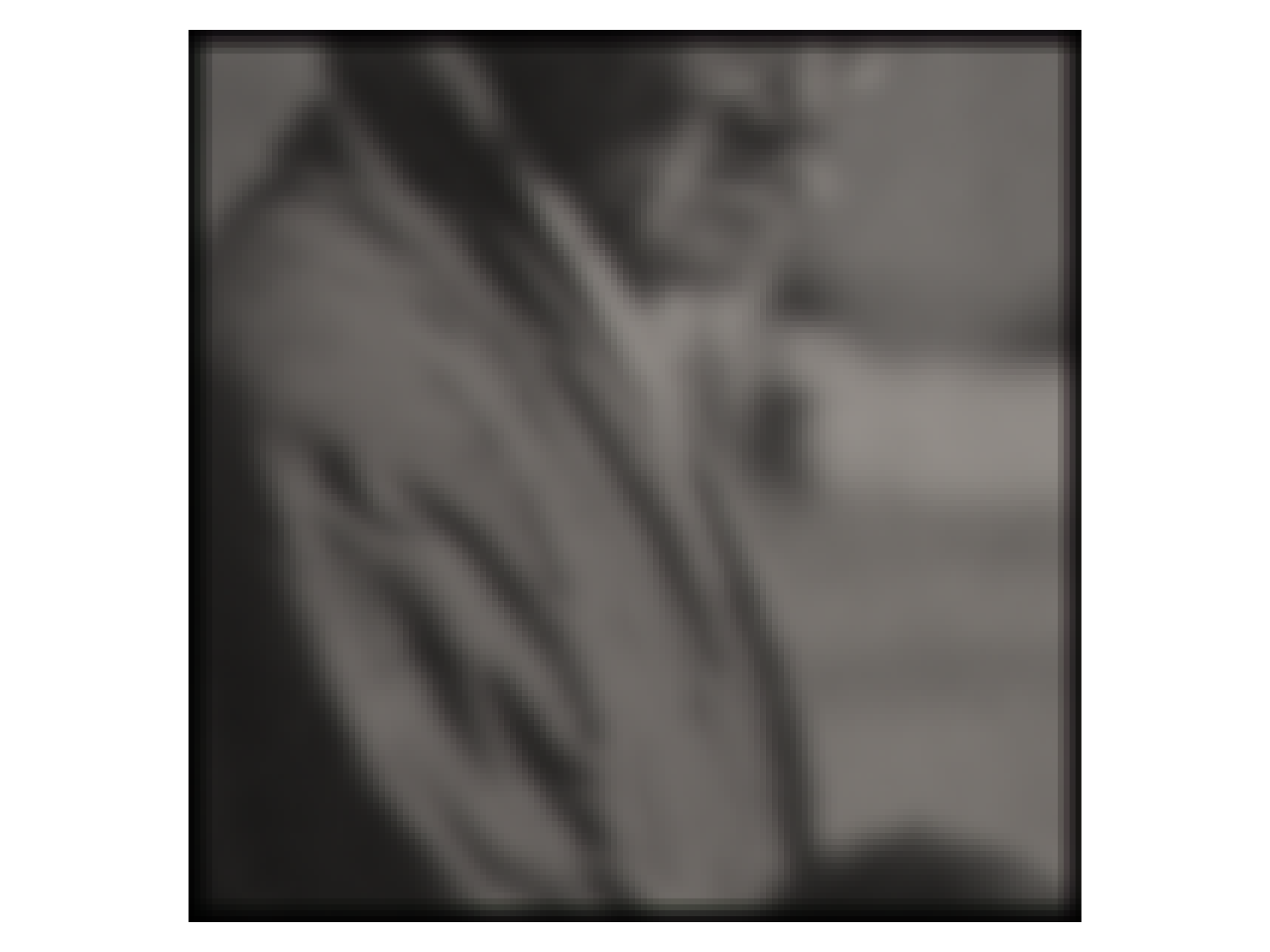}
        &   \includegraphics[width=2.3cm, trim={2.6cm 0.5cm 2.6cm 0.5cm}, clip]{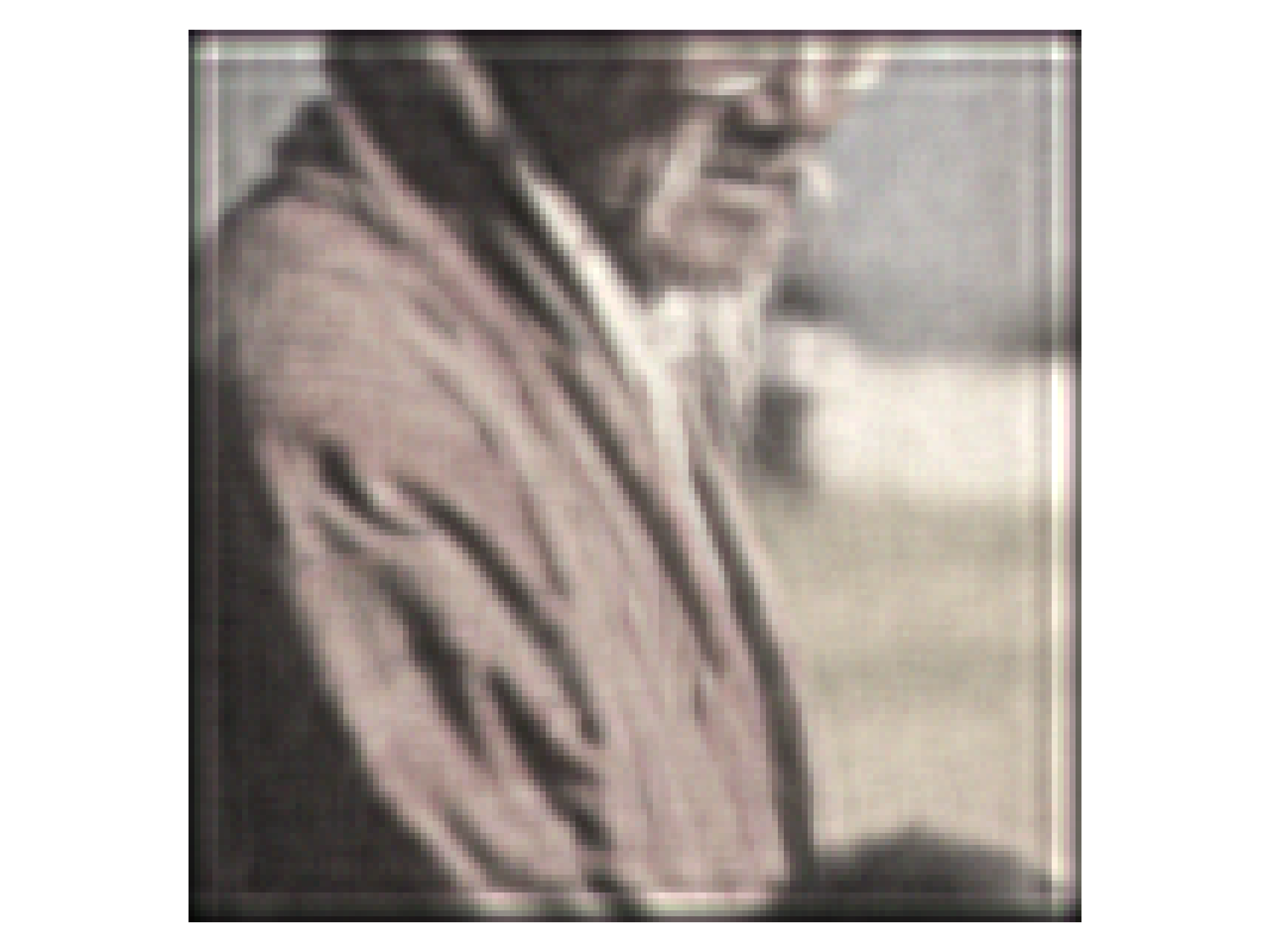}
        &   \includegraphics[width=2.3cm, trim={2.6cm 0.5cm 2.6cm 0.5cm}, clip]{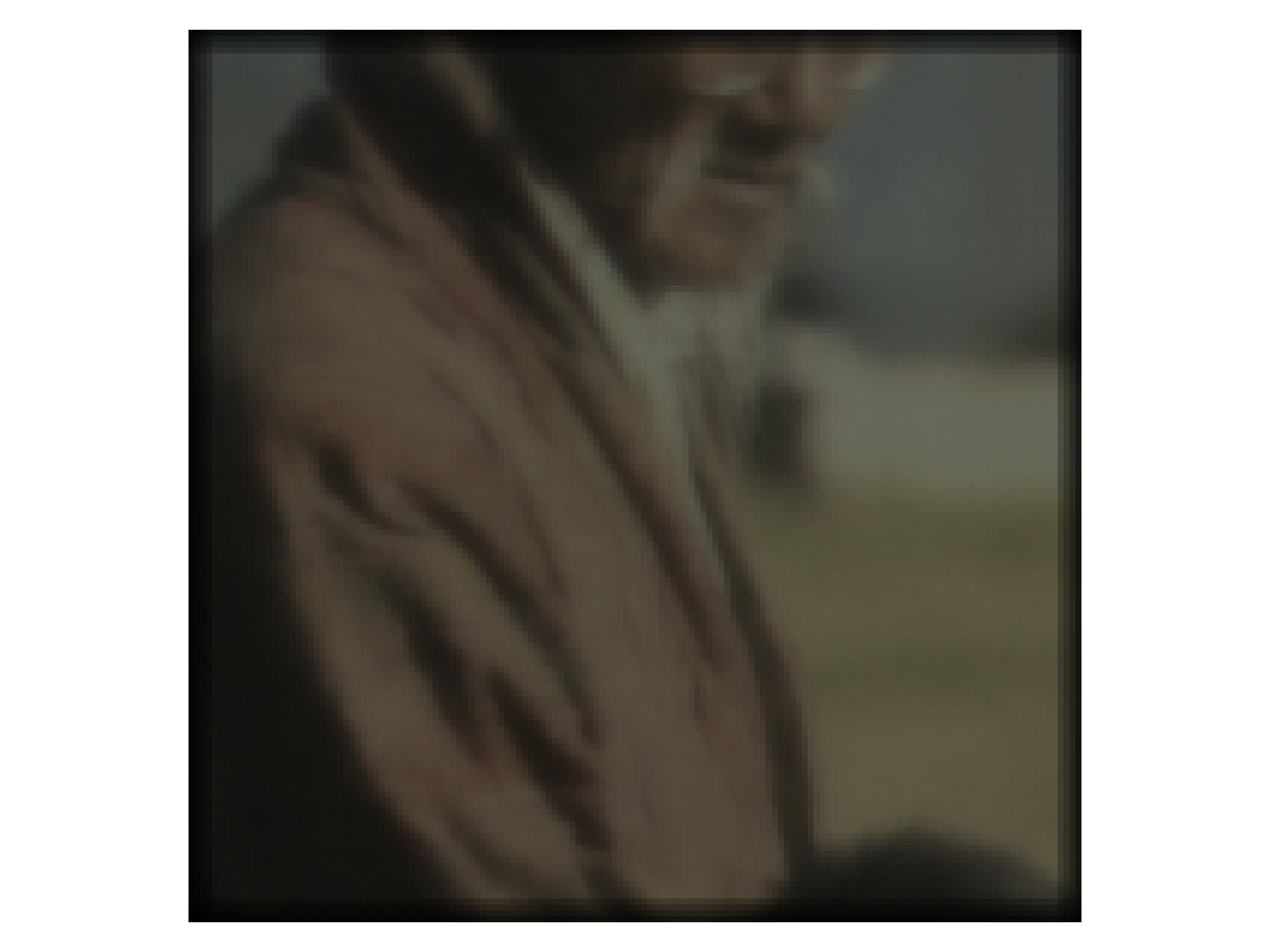}
        &   \includegraphics[width=2.3cm, trim={2.6cm 0.5cm 2.6cm 0.5cm}, clip]{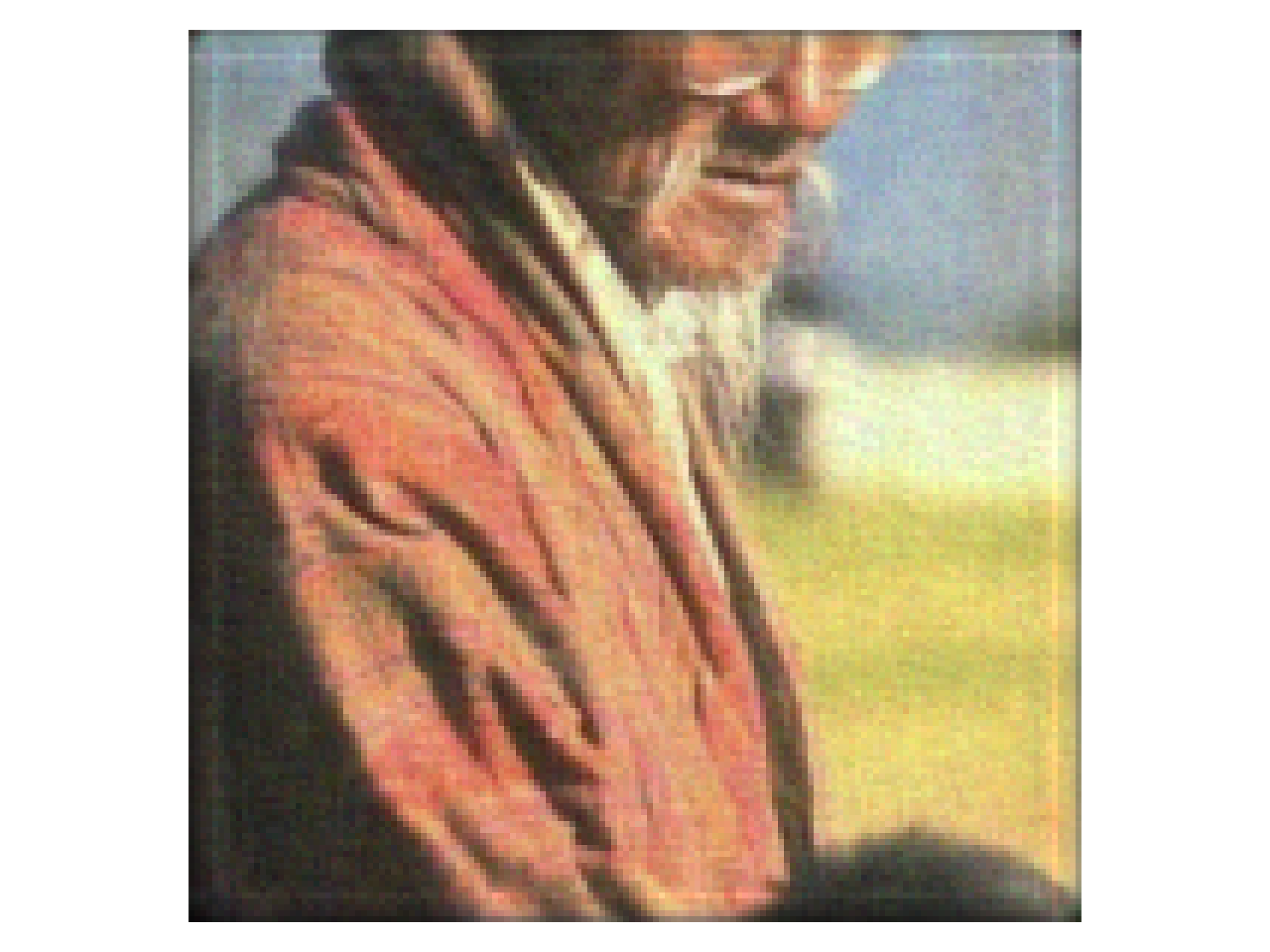}
        \\[0.1cm]
        %
        & $\phantom{.}$ &  \multicolumn{2}{c}{AD with $\Ltrain=1$} & \multicolumn{2}{c}{AD with $\Ltrain=20$} \\[-0.05cm]
        $v$ ($\varepsilon=0.05$)
        & $\phantom{.}$ 
        &   $\Lden=1$ & $\Lden=20$ & $\Lden=1$ & $\Lden=20$ \\
        \includegraphics[width=2.3cm, trim={2.6cm 0.5cm 2.6cm 0.5cm}, clip]{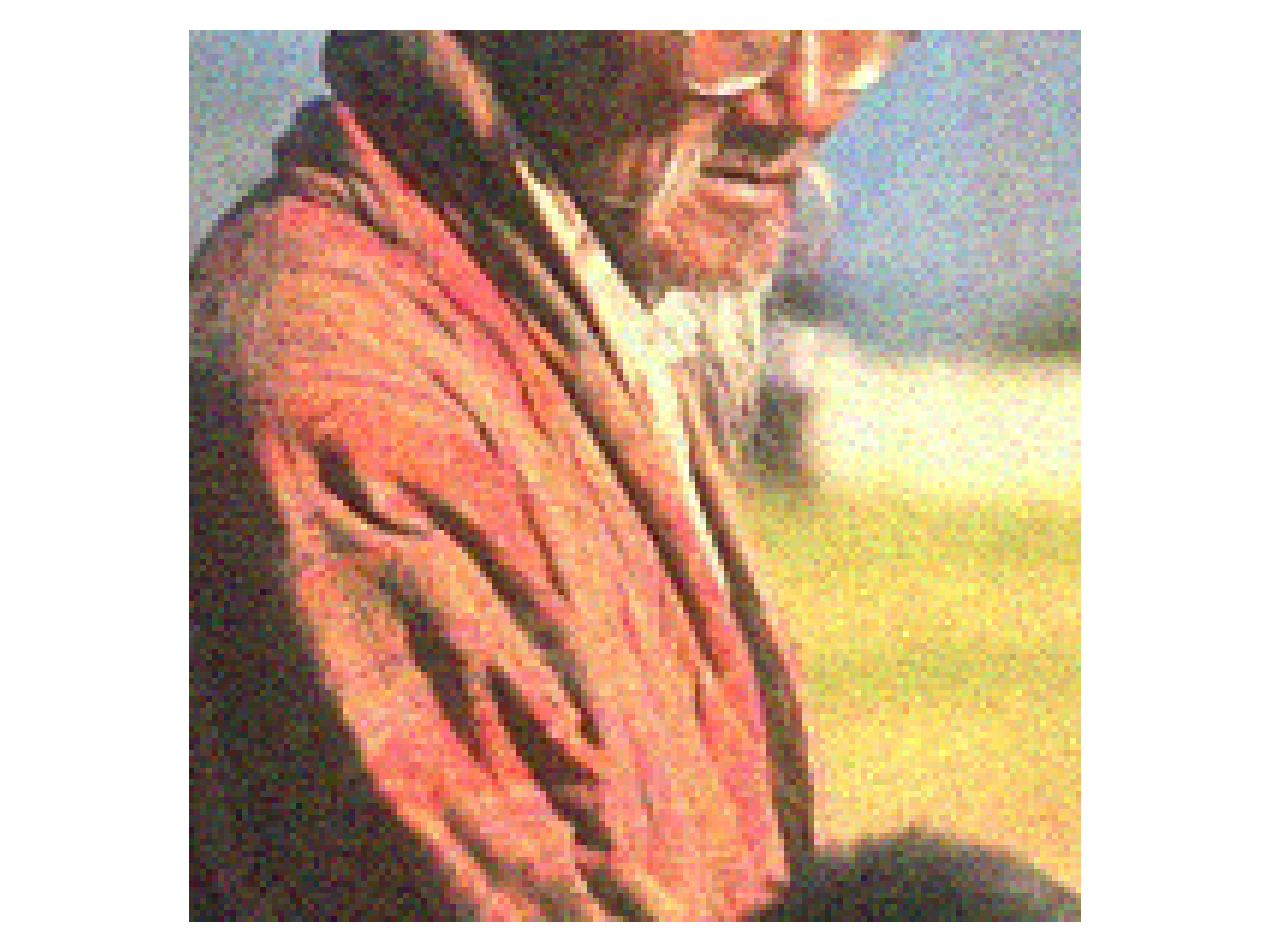}
        & $\phantom{.}$ 
        &   \includegraphics[width=2.3cm, trim={2.6cm 0.5cm 2.6cm 0.5cm}, clip]{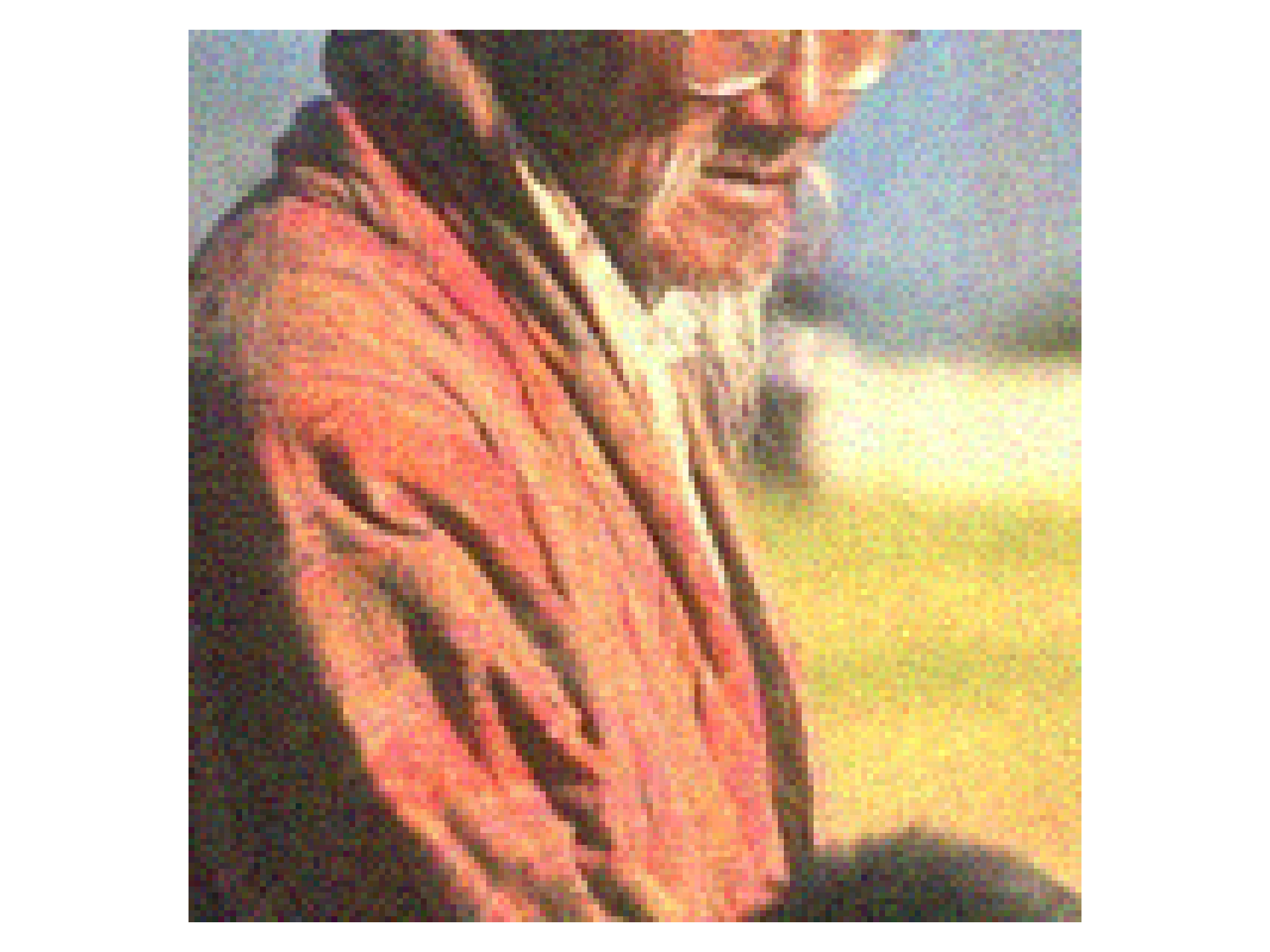}
        &   \includegraphics[width=2.3cm, trim={2.6cm 0.5cm 2.6cm 0.5cm}, clip]{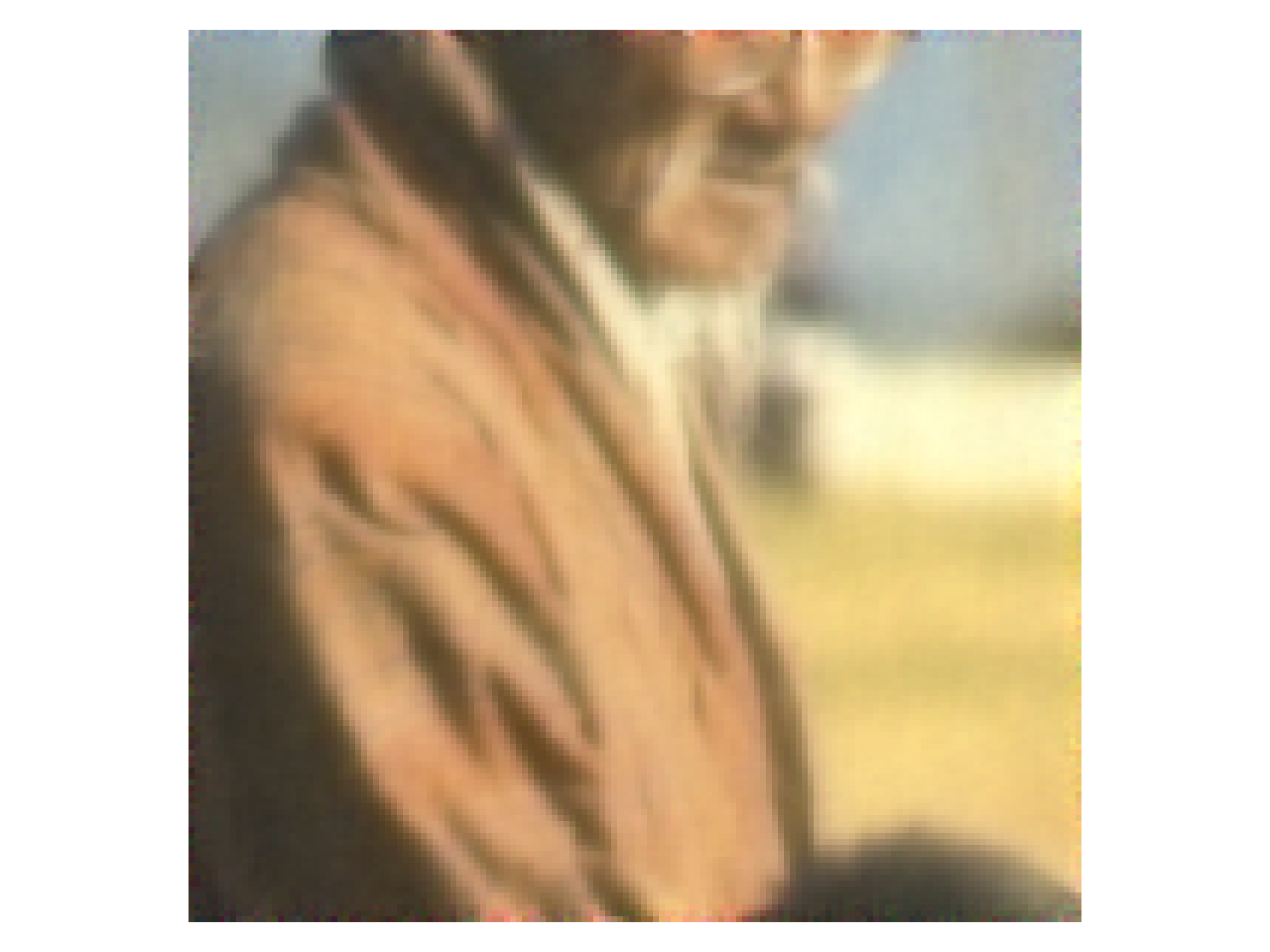}
        &   \includegraphics[width=2.3cm, trim={2.6cm 0.5cm 2.6cm 0.5cm}, clip]{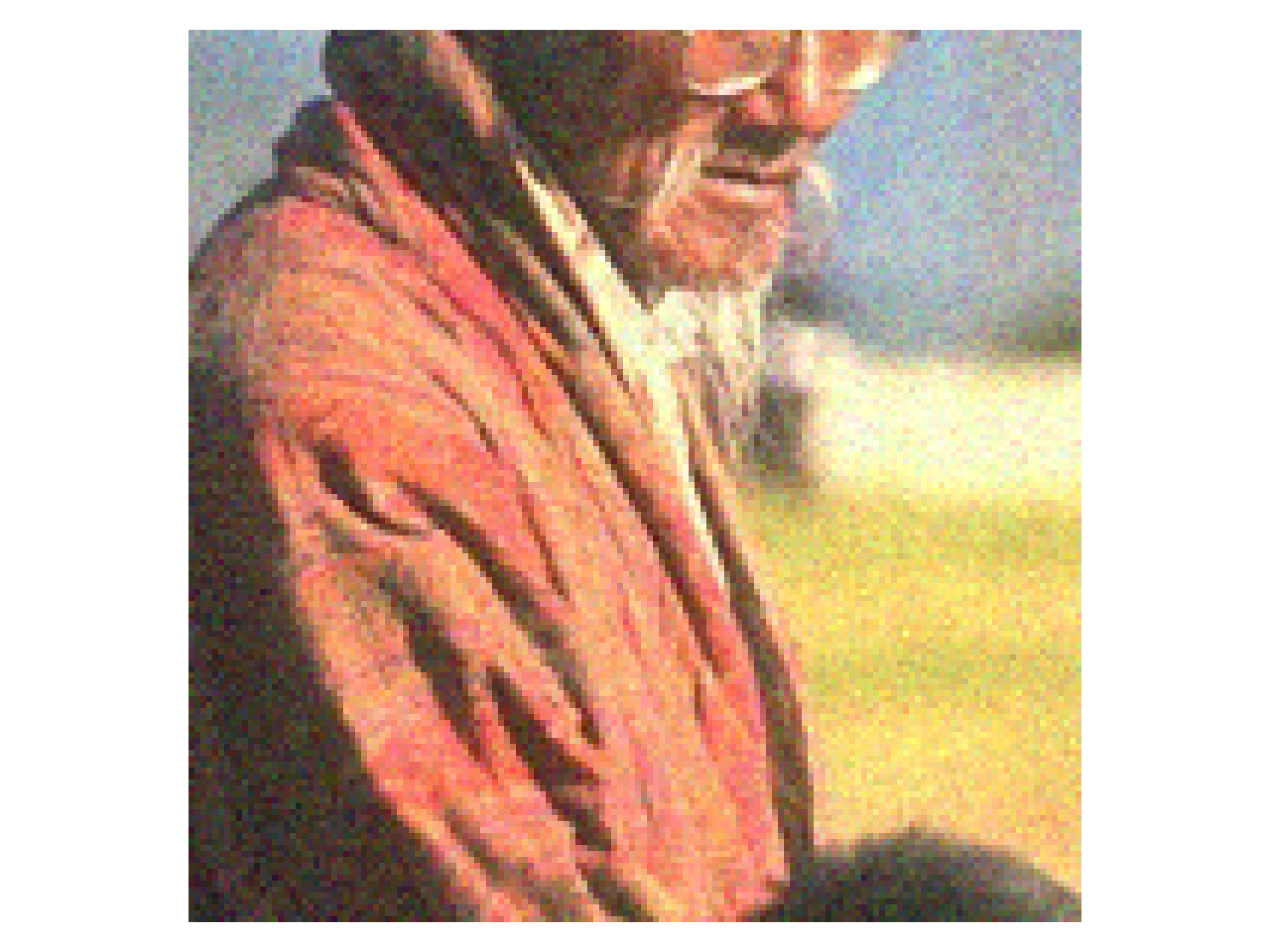}
        &   \includegraphics[width=2.3cm, trim={2.6cm 0.5cm 2.6cm 0.5cm}, clip]{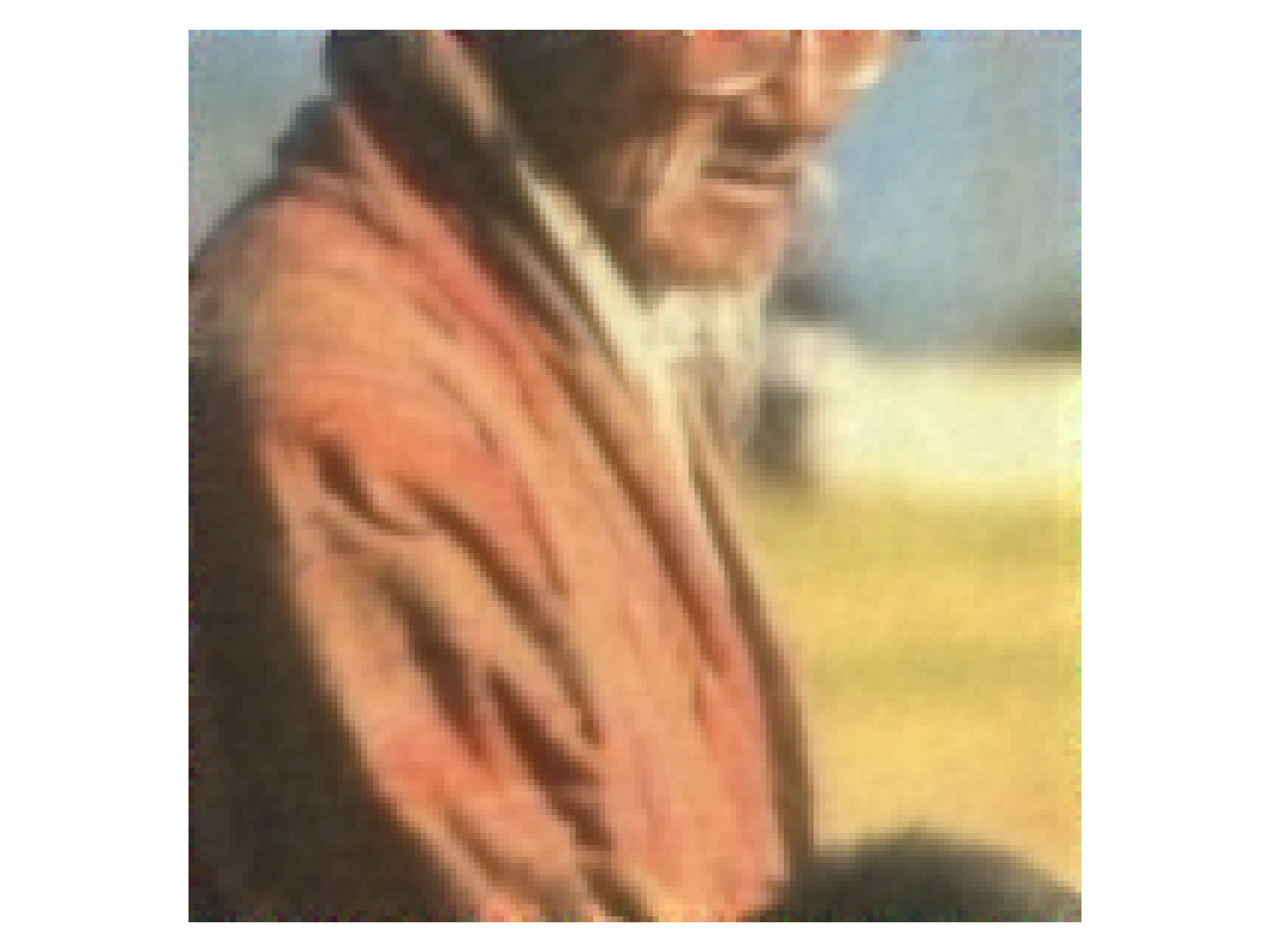}
        \\[0.3cm]
        & $\phantom{.}$ & \multicolumn{4}{c}{Denoising for $\Lden= 1,000$} \\
        DRUnet
        & $\phantom{.}$ 
        &   SD ($\Ltrain=1$) &  SD ($\Ltrain=20$) 
        &   AD ($\Ltrain=1$) &  AD ($\Ltrain=20$) \\
        $35.8$dB 
        && $26.7$dB & $26.6$dB 
        & $29.0$dB & $31.4$dB
        \\
        \includegraphics[width=2.3cm, trim={2.6cm 0.5cm 2.6cm 0.5cm}, clip]{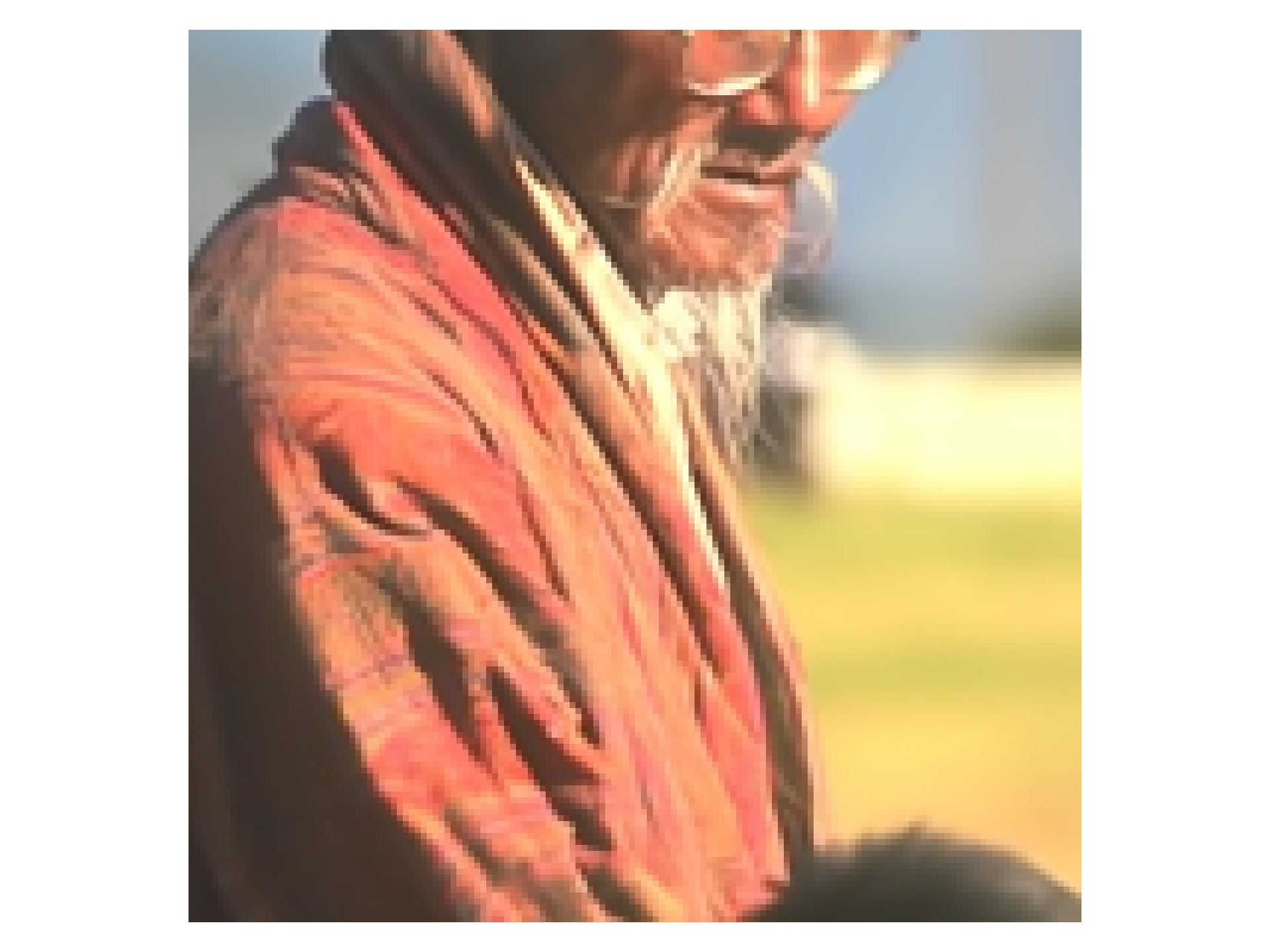}
        & $\phantom{.}$ 
        &  \includegraphics[width=2.3cm, trim={2.6cm 0.5cm 2.6cm 0.5cm}, clip]{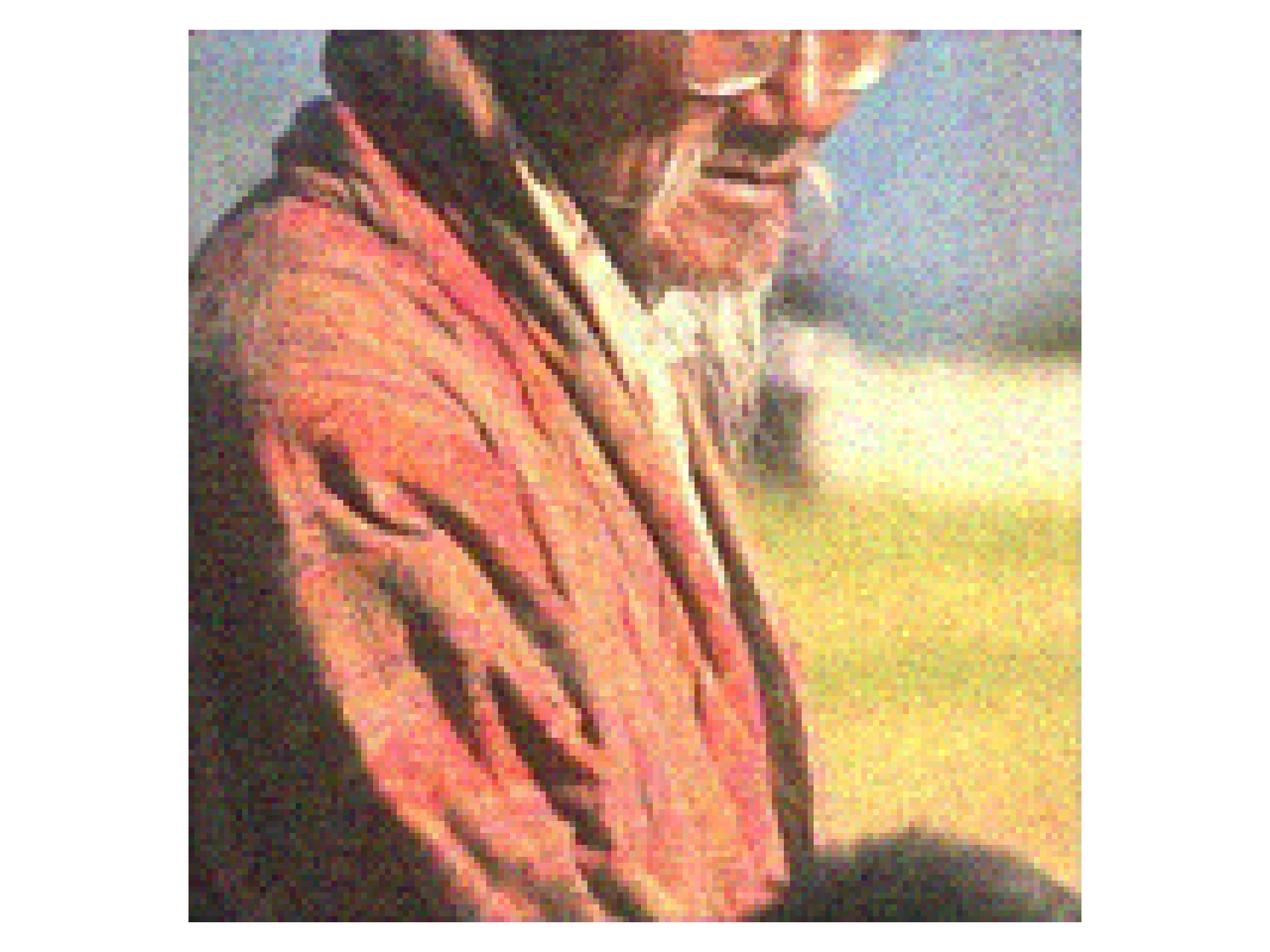}
        &  \includegraphics[width=2.3cm, trim={2.6cm 0.5cm 2.6cm 0.5cm}, clip]{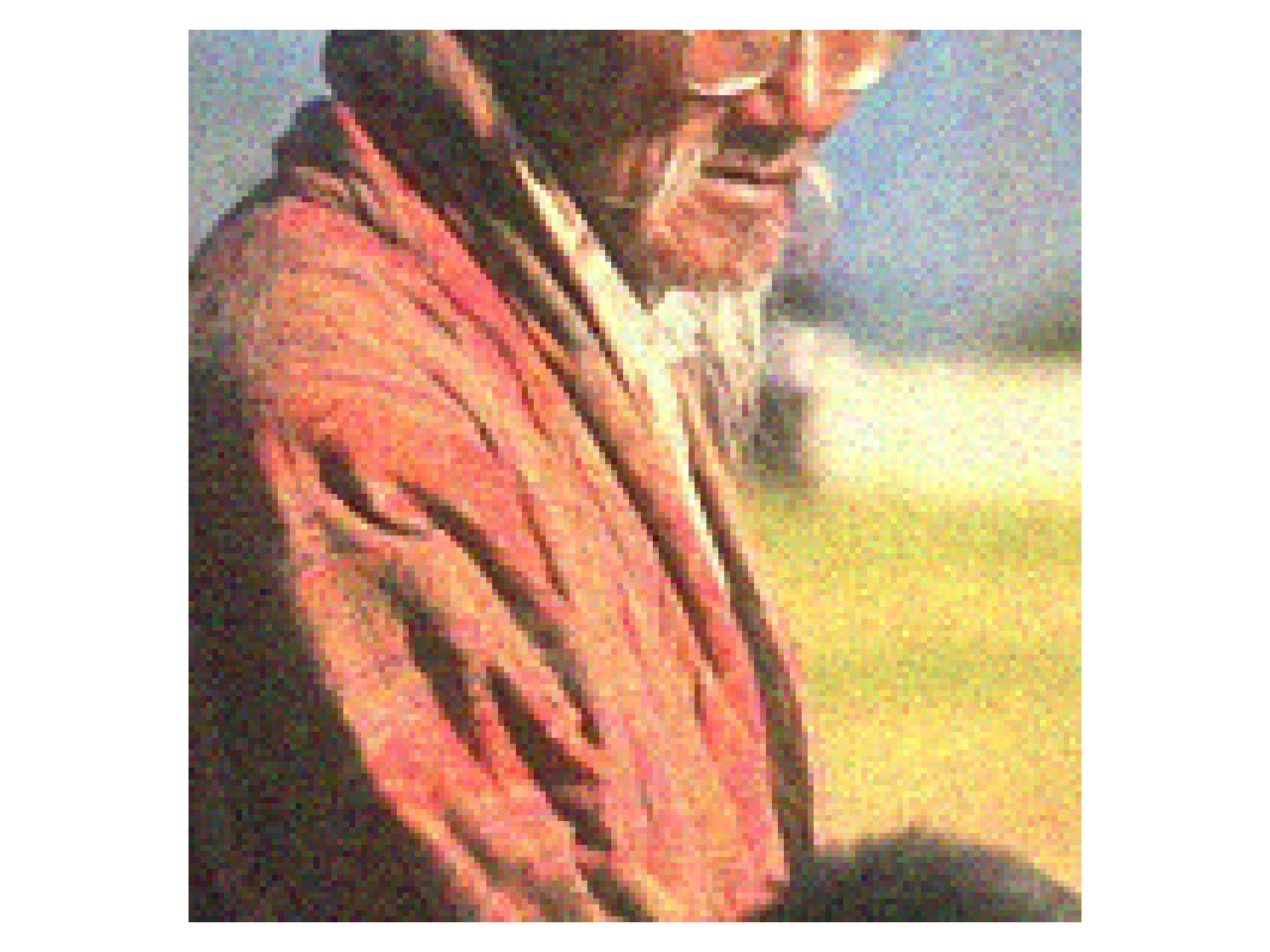}
        &  \includegraphics[width=2.3cm, trim={2.6cm 0.5cm 2.6cm 0.5cm}, clip]{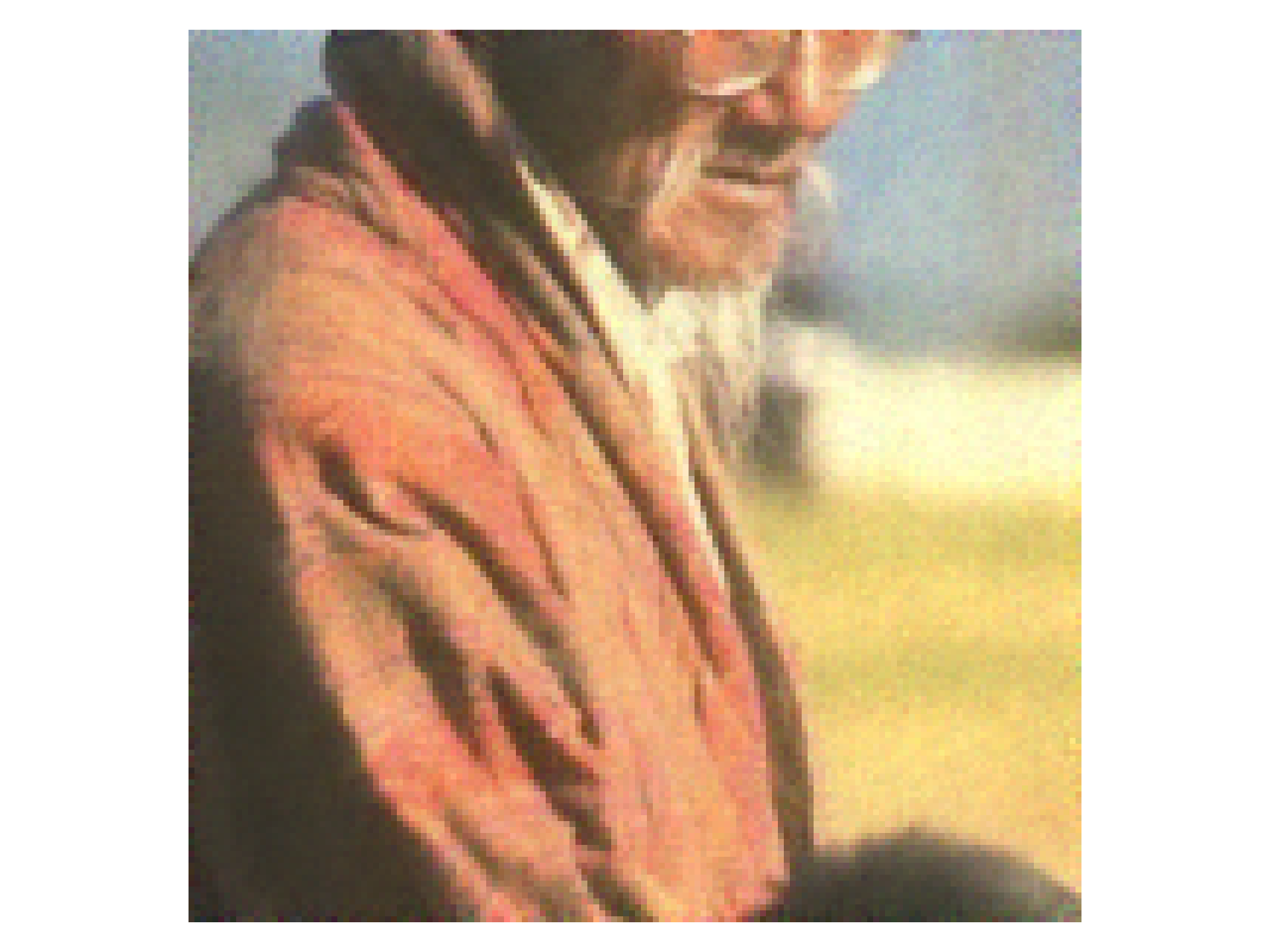}
        &  \includegraphics[width=2.3cm, trim={2.6cm 0.5cm 2.6cm 0.5cm}, clip]{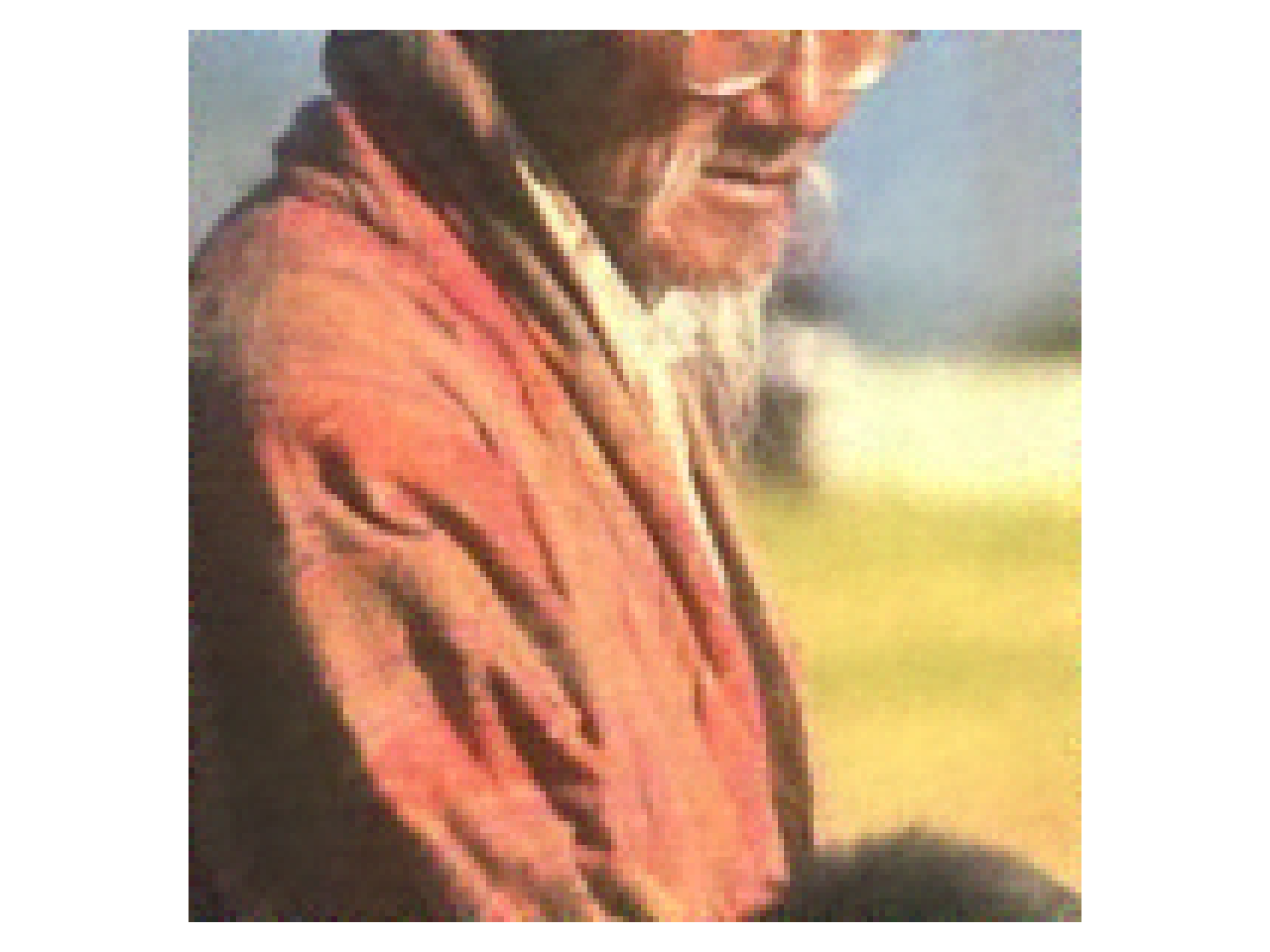}
    \end{tabular}
    
    \vspace{-0.2cm}

    \caption{Example of outputs obtained with the unfolded SD and AD for different configurations. 
    Left column shows original image $\overline{x}$, noisy image $v$ and denoised image obtained with DRUnet for comparison. 
    Other columns are as follows. 
    First (resp. second) row: Outputs of the SD (resp. AD) for different settings $(\Ltrain, \Lden) \in \{1, 20\}^2$.
    Last row: Denoised images obtained for the SD and the AD for the two training settings $\Ltrain \in \{1, 20\}$ and with $\Lden = 1,000$.
    }
    \label{fig:denoise-example}
\end{figure}

\begin{remark}\
\begin{enumerate}
    \item 
    Since unrolled algorithms have a finite number of iterations $L\in \Nset^*$, it can be interpreted as a neural network with $L$ layers, where dictionaries are layer dependent (i.e., $L$ different dictionaries to be learned).
    This approach has been proposed by~\cite{GL10} for the
    synthesis formulation~\eqref{eqn:opt_synthesis} to speed up the computation of $z^\dagger_{\Dics}$, where the authors unrolled iterations of FB~\cite{DDM04}, with $g$ being the $\ell^1$ norm. The
    resulting method is known as LISTA.
    The work of~\cite{GL10} has been further improved in multiple contributions, considering either a supervised setting~\cite{CLWW18, liu2019alista} or an unsupervised setting~\cite{AMMG19, MB17}. 
    \item 
    Unrolled-based networks adapted to the analysis formulation have also been studied~\cite{chambolle2020learning, CSM20, JP2020, le2022fast}. 
    Recently a few works have also proposed to use unfolded proximal analysis denoising networks in PnP algorithms~\cite{le2023pnn, repetti2022dual}. 
    These works have empirically shown that, despite having $10^2$ to $10^3$ less parameters than state-of-the-art denoising networks such as DnCNN~\cite{zhang2017beyond} and DRUnet~\cite{zhang2021plug}, unfolded denoising networks are more robust, especially when plugged in a FB algorithm for solving deblurring imaging problems. However, it is to be noted that in our approach, we are focusing on learning a dictionary, fixed for all the layers of the unrolled algorithm, leading to an even lighter architecture.
\end{enumerate}

\end{remark}

\subsubsection{Simulation results}
\begin{figure}
    \includegraphics[width=\textwidth]{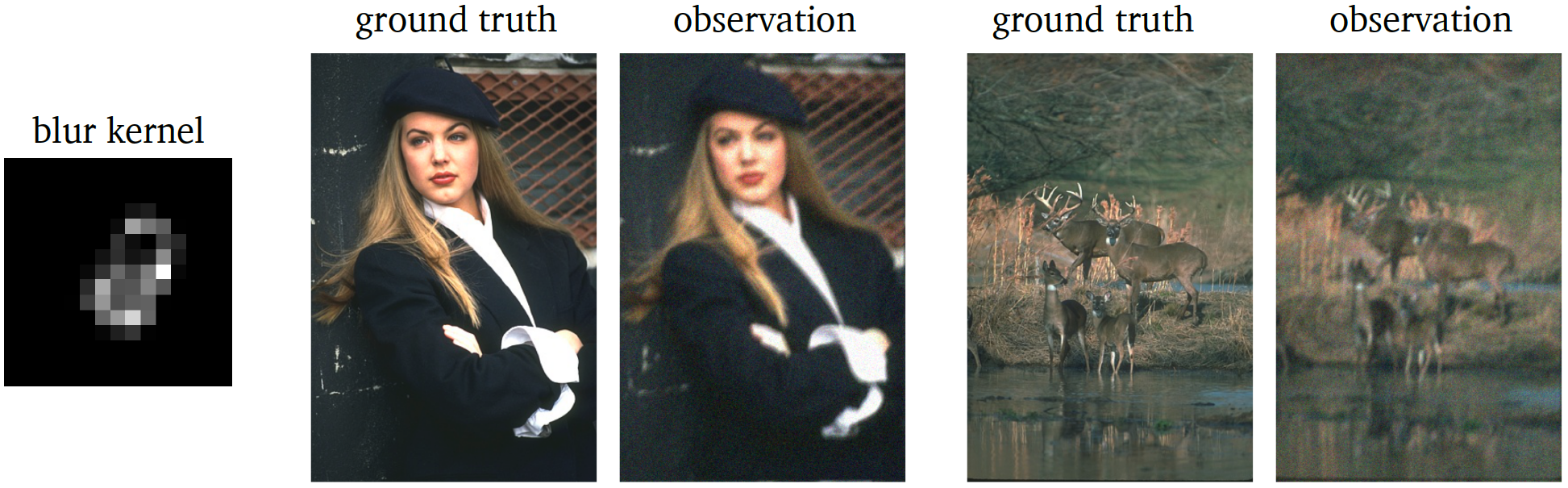}

    \vspace{-0.3cm}
    
    \caption{\label{fig:kernel_blur}
    Blur kernel used in the experiments to model operator $A$ in~\eqref{pb:invpb}, and example of ground truth and associated observed images.
    }
\end{figure}

We consider an image deblurring problem of the form of~\eqref{pb:invpb}, where $A \colon \Rset^N \to \Rset^M$ models a convolution with kernel given in Figure~\ref{fig:kernel_blur} and $\varepsilon= 0.05$. 
Both the AD and SD are trained following the DDL procedure described in Section~\ref{Ssec:exp:ddl-train}. The resulting AD and SD are then plugged in the FB-PnP iterations and applied to the image deblurring problem. We will further compare the proposed approaches with an FB-PnP algorithm with a DRUnet denoiser \cite{zhang2021plug}, considered as state-of-the-art for PnP (see Section~\ref{ASec:drunet}).

\begin{figure}
    \centering
    \includegraphics[width=1\textwidth]{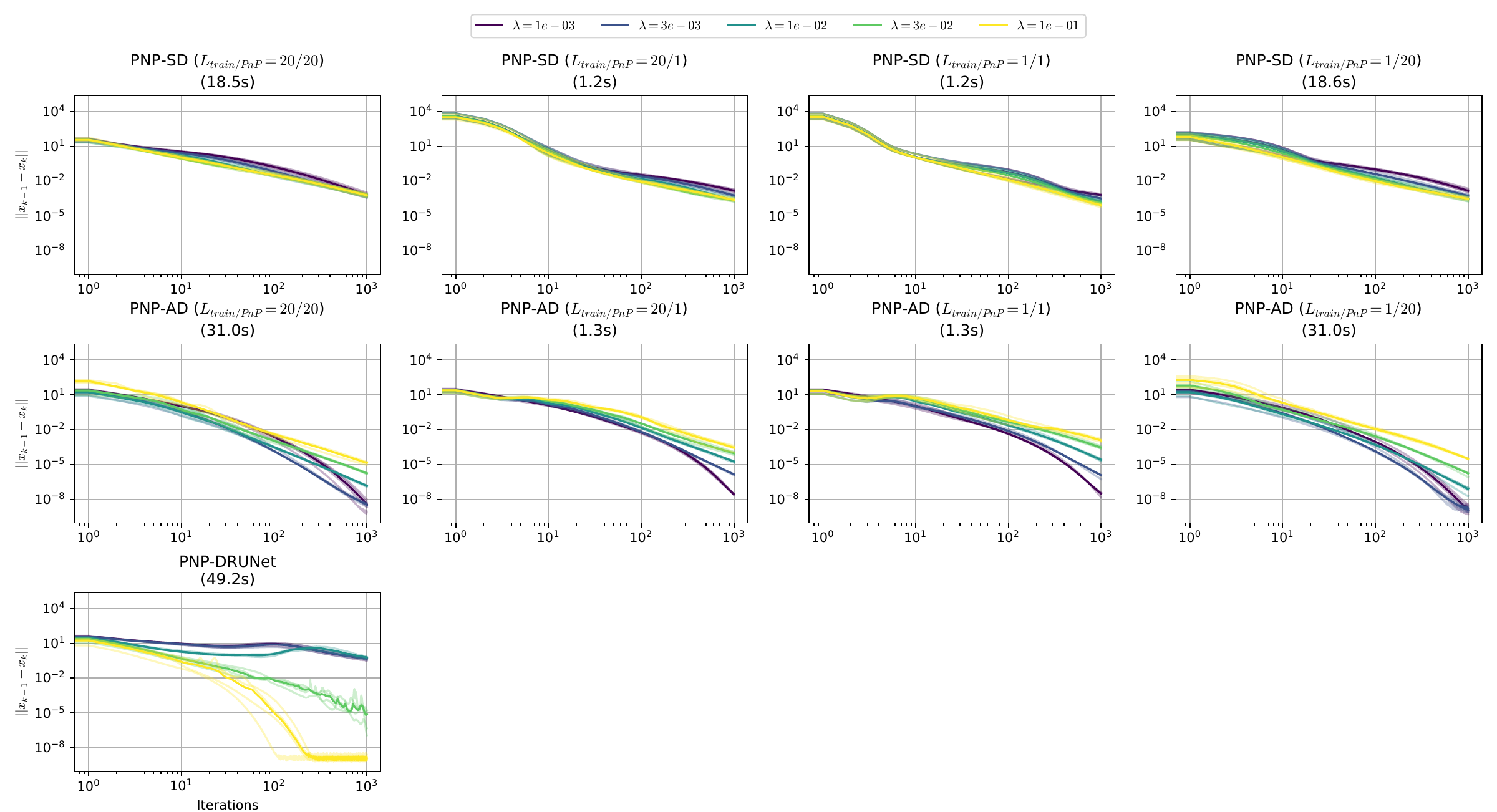}

    \caption{
    Convergence profiles with respect to iterations for the different FB-PnP algorithms. 
    AD and SD are trained with $\Ltrain\in\{1,20\}$, and implemented in the FB-PnP with either $\Lpnp\in\{1,20\}$.}
    \label{fig:cvgce}
\end{figure}
\begin{figure}
    \centering
    \includegraphics[width=\textwidth]{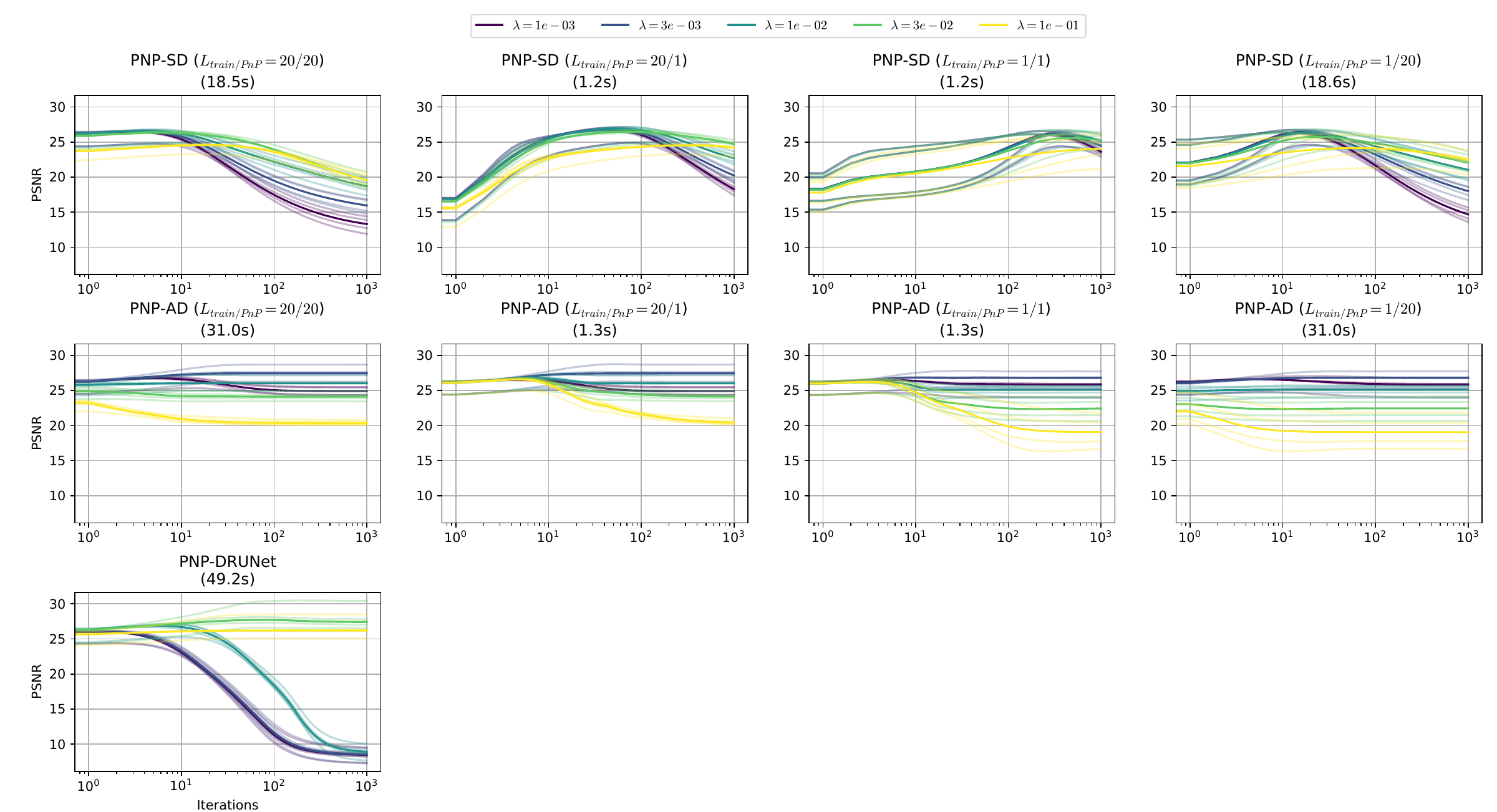}
    
    \caption{
    PSNR profiles with respect to iterations for the different FB-PnP algorithms. AD and SD are trained with $\Ltrain\in\{1,20\}$, and implemented in the FB-PnP with either $\Lpnp\in\{1,20\}$.}
    \label{fig:psnr}
\end{figure}

In Figure~\ref{fig:cvgce}, we compare convergence behaviors of the FB-PnP iterations $(\|x_{k+1} - x_k\|)_{k\in \mathbb N}$ for different denoisers, for different regularization parameters 
$\lambda\in \{10^{-3}, 3\time 10^{-3}, 10^{-2}, 3\time 10^{-2}, 10^{-1}\}$.
Each plot shows convergence curves when running the algorithm on $4$ different images.
The first (resp. second) row shows results obtained with SD (resp. AD), trained on different settings $\Ltrain \in \{1, 20\} $ and evaluated on different settings $\Lpnp \in \{1, 20\}$.
The third row in Figure~\ref{fig:cvgce} shows FB-PnP behaviour with DRUnet. 
FB-PnP algorithms with SD and AD exhibit monotonic convergence trends independently of the choice of the regularization parameter $\lambda$. This is not the case for DRUnet that seems to converge only for $\lambda=10^{-1}$, while other choices are not stable\footnote{For DRUnet $\lambda$ corresponds to the noise level given as an input to the network (see Section~\ref{ASec:drunet} for details). Additional simulations have been performed for DRUnet increasing $\lambda$, however these choices were observed to lead to lower PSNR values.
}. 
We further provide curves of PSNR values with respect to iterations in Figure~\ref{fig:psnr}, for the above mentioned FB-PnP iterations. 
It can be observed that for AD the PSNR profiles are very similar when fixing $\lambda$, and looking at the different strategies $(\Ltrain, \Lpnp) \in \{1, 20\}^2$. This suggests that the AD framework is stable, independently on the number of sub-iterations used for training or for the reconstruction process. 
This is not the case for SD. We can observe that the strategies $(\Ltrain, \Lpnp) = (1, 20)$ and $(\Ltrain, \Lpnp) = (20, 20)$ have similar trends, and strategies $(\Ltrain, \Lpnp) = (1, 1)$  and $(\Ltrain, \Lpnp) = (20, 1)$ have similar trends too. The last two strategies seems also to convergence slower. This was to be expected given theirs denoising performances shown in Figure~\ref{fig:denoise-example} for $(\Ltrain, \Lden) = (1, 1)$ and $(\Ltrain, \Lden) = (20, 1)$. 
DRUnet exhibits very unstable behaviour for $\lambda \in \{10^{-3}, 3\times 10^{-3}, 10^{-2}\}$, as expected given Figure~\ref{fig:cvgce}. More stable results are for $\lambda=10^{-1}$. The choice $\lambda=3\times 10^{-2}$ seems to give slightly higher PSNR values, although the associated convergence profiles in Figure~\ref{fig:cvgce} are not very stable (no monotony as per convergence guarantees).

\begin{figure}
    \includegraphics[width=\textwidth]{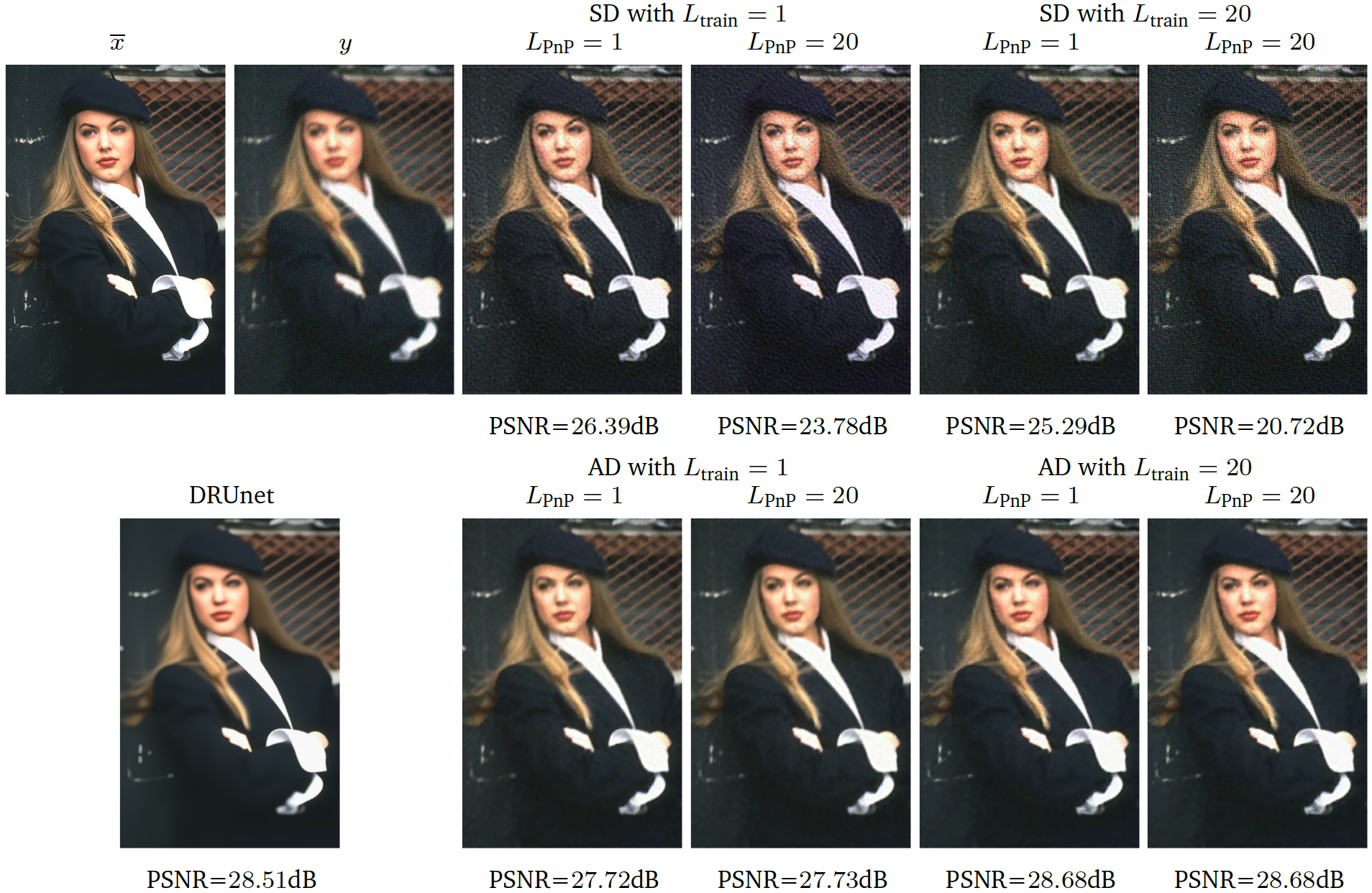}\\[0.2cm]
    \includegraphics[width=\textwidth]{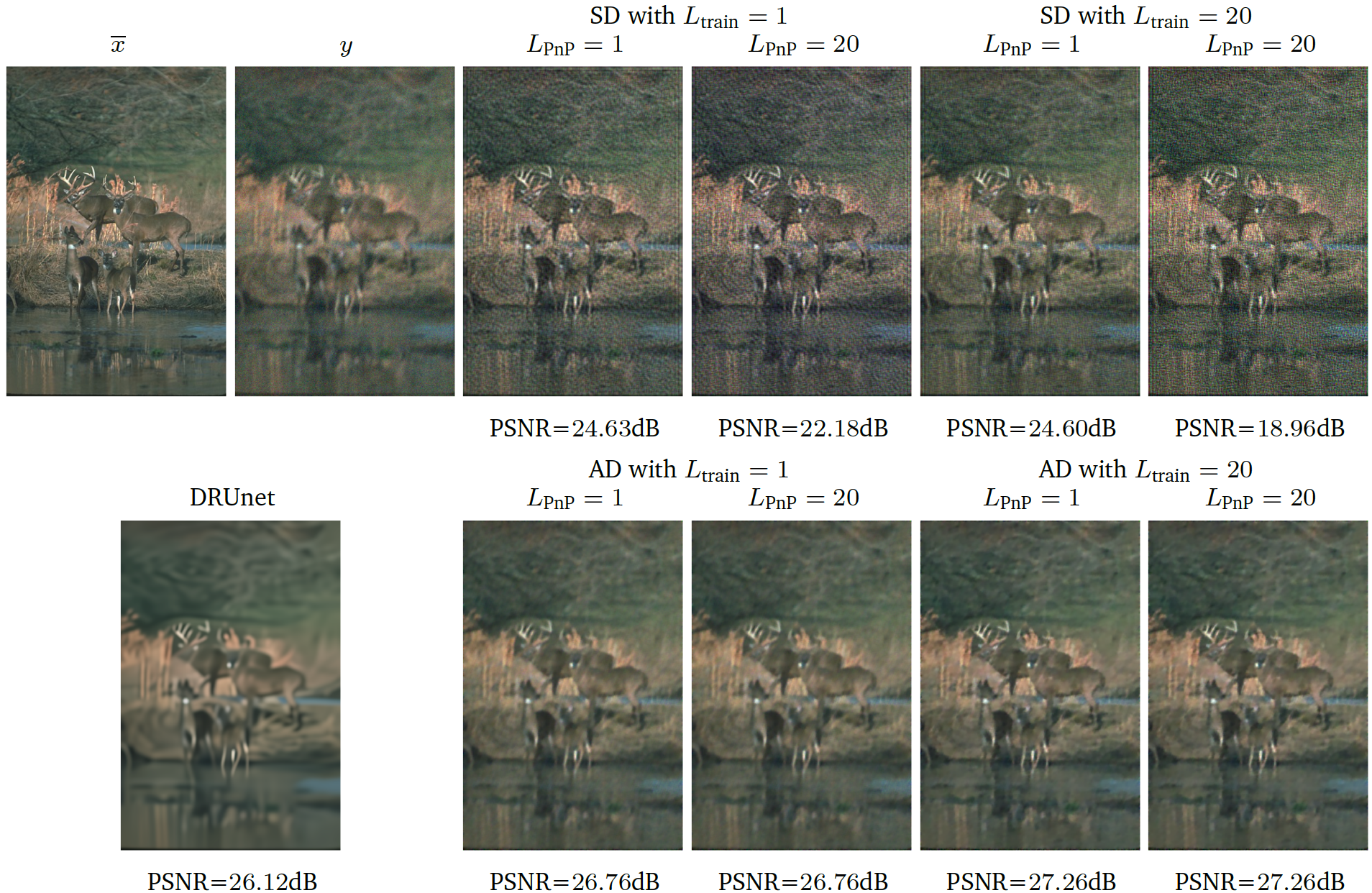}

    \caption{Examples of FB-PnP outputs obtained with the different denoisers tested, for two image examples. Regularization parameters are chosen according to convergence profiles given in Figure~\ref{fig:cvgce} and Figure~\ref{fig:psnr}: 
    $\lambda_{\textsf{SD}}=3\times 10^{-2}$ for $(\Ltrain,\Lpnp) \in \{(1,1), (20,1), (20,20\}$, 
    $\lambda_{\textsf{SD}}=10^{-2}$ for $(\Ltrain,\Lpnp) = (1,20) = 10^{-10}$,
    $\lambda_{\textsf{AD}}=3 \times 10^{-3}$ for all $(\Ltrain,\Lpnp)$ 
    and $\lambda_{\textsf{DRUnet}}=10^{-1}$.}
    \label{fig:reconstruction-examples}
\end{figure}

Additionally, we give in Figure~\ref{fig:reconstruction-examples} reconstruction results for two images. 
The regularization parameters are chosen according to Figure~\ref{fig:cvgce} and Figure~\ref{fig:psnr}. For AD we chose $\lambda=3 \times 10^{-3}$ that achieves best results for all testing configurations. For SD we chose $\lambda=3\times 10^{-2}$ for $(\Ltrain,\Lpnp) \in \{(1,1), (20,1), (20,20\}$, and $\lambda_{\textsf{SD}}=10^{-2}$ for $(\Ltrain,\Lpnp) = (1,20) = 10^{-10}$. 
Finally, for DRUnet we take $\lambda=10^{-1}$ that is the only value for which DRUnet is stable\footnote{Note that $\lambda=3\times 10^{-2}$ gives better PSNR values for DRUnet, however to the price of important artifact in the reconstruction due to instability of the PnP iterations (see also Figure~\ref{fig:reconstructed-lambda}).}. 
Overall it can be observed that AD leads to the best reconstructions, with any configuration $(\Ltrain, \Lden) \in  \{1,20\}^2$. Further, for fixed dictionary, doing $\Lpnp=1$ or $\Lpnp=20$ sub-iterations leads to the same reconstructions in our examples. This observation is not true for SD due to early stopping of the algorithms. In particular, SD produces artifacts at the edges of the images for both examples, even when high PSNR values are achieved.

\begin{figure}
    \includegraphics[width=\textwidth]{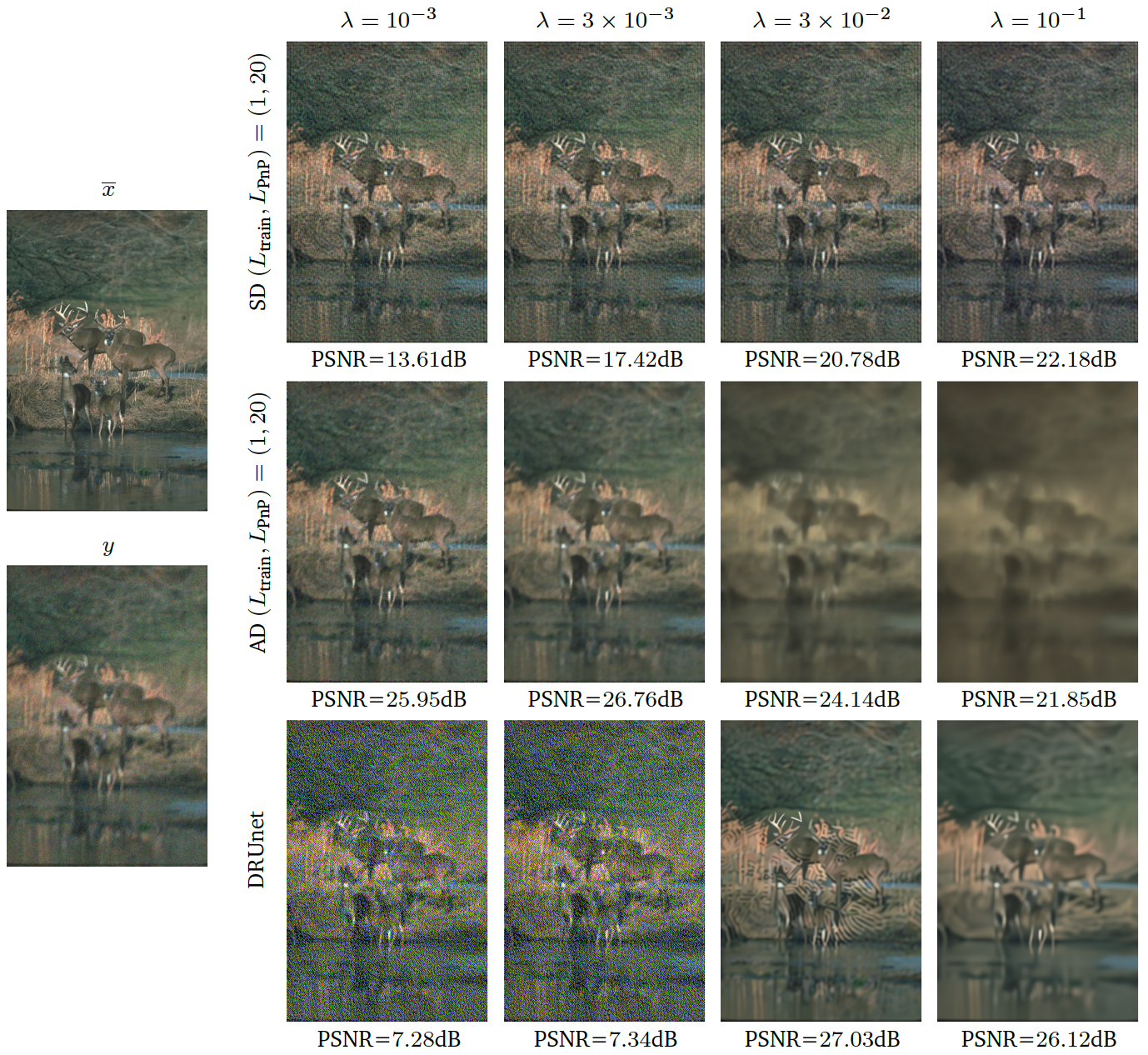}
    \caption{Example of FB-PnP outputs obtained with SD for $(\Ltrain,\Lpnp)=(1,20)$ (top row), AD for $(\Ltrain,\Lpnp)=(1,20)$ (middle row) and DRUnet (bottom row) when varying the regularization parameter $\lambda\in \{10^{-3}, 3 \times 10^{-3}, 3 \times 10^{-2}, 10^{-1}\}$.}
    \label{fig:reconstructed-lambda}
\end{figure}

Finally, for the sake of completeness, we provide in Figure~\ref{fig:reconstructed-lambda} reconstructed images obtained with the FB-PnP when varying $\lambda\in \{10^{-3}, 3\times 10^{-3}, 3\times 10^{-2}, 10^{-1}\}$. We show images obtained with SD and AD for $(\Ltrain,\Lpnp)=(1,20)$ (top and middle rows), and with DRUnet (bottom row). Interestingly, with visual inspection, it seems that SD is less sensitive to the choice of $\lambda$ than AD and DRUnet, while AD is the more stable with respect to PSNR (despite very blurry reconstructions for large values of $\lambda$).

\section{Conclusion}
In this work we study convergence behavior of the FB-PnP algorithm where the proximity operator is replaced by a specific unfolded denoiser. In particular, we study two strategies: when the denoising problem is solved with an analysis formulation or when it is solved with a synthesis formulation. The two resulting analysis and the synthesis denoising minimization problems can then be solved with sub-iterations based on the dual FB algorithm and the FB algorithm, respectively.
For both strategies, we show that computing only one sub-iteration within the FB-PnP combined with a warm-restart procedure on the sparse coefficients is equivalent to the FB-PnP algorithm when the full analysis or the synthesis denoising minimization problems are solved at each iteration. We further show that when smoothing the denoising problem using a Moreau envelope, under theoretical conditions, this equivalence extends to an arbitrary number of sub-iterations. 
Finally, we investigated how these different strategies compare on a toy compressive sensing problem as well as on a deblurring imaging problem in the context of DDL. 
Interestingly, we observed that training the AD with only $1$ iteration ($\Ltrain=1$) or a larger number of iterations ($\Ltrain=20$) does not seem to change much the denoising performances (see Figure~\ref{fig:denoise-example}). Further, for the FB-PnP, training with only $\Ltrain=1$ iterations for both the SD and the AD leads to the best performances (see Figure~\ref{fig:reconstruction-examples}). This suggests that choosing $\Ltrain=1$ enables a fast and efficient denoiser training. In particular, AD with $\Ltrain=1$ appeared to lead to the best results overall in our experiments.

\bibliographystyle{siamplain}
\bibliography{references}

\begin{thebibliography}{10}

\bibitem{AMMG19}
{\sc P.~Ablin, T.~Moreau, M.~Massias, and A.~Gramfort}, {\em Learning step sizes for unfolded sparse coding}, in Advances in Neural Information Processing Systems, 2019, pp.~13100--13110.

\bibitem{AEB06}
{\sc M.~Aharon, M.~Elad, and A.~Bruckstein}, {\em {K-SVD}: An algorithm for designing overcomplete dictionaries for sparse representation}, IEEE Transactions on Signal Processing, 54 (2006), pp.~4311 -- 4322.

\bibitem{bach2012optimization}
{\sc F.~Bach, R.~Jenatton, J.~Mairal, and G.~Obozinski}, {\em Optimization with sparsity-inducing penalties}, Foundations and Trends{\textregistered} in Machine Learning, 4 (2012), pp.~1--106.

\bibitem{bauschke2011convex}
{\sc H.~H. Bauschke and P.~L. Combettes}, {\em Convex analysis and monotone operator theory in Hilbert spaces}, vol.~408, Springer, 2017.

\bibitem{BT09}
{\sc A.~Beck and M.~Teboulle}, {\em A fast iterative shrinkage-thresholding algorithm for linear inverse problems}, SIAM Journal on Imaging Sciences, 2 (2009), pp.~183--202.

\bibitem{berstsekas99}
{\sc D.~P. Bertsekas}, {\em Nonlinear programming}, Athena Scientific, Belmont, MA, 2nd~ed., 1999.

\bibitem{bredies2024learning}
{\sc K.~Bredies, J.~Chirinos-Rodriguez, and E.~Naldi}, {\em Learning firmly nonexpansive operators}, arXiv preprint arXiv:2407.14156,  (2024).

\bibitem{briceno2024infimal}
{\sc L.~M. Brice{\~n}o-Arias and N.~Pustelnik}, {\em Infimal post-composition approach for composite convex optimization applied to image restoration}, Signal Processing, 223 (2024), p.~109549.

\bibitem{brifman2016turning}
{\sc A.~Brifman, Y.~Romano, and M.~Elad}, {\em Turning a denoiser into a super-resolver using plug and play priors}, in IEEE International Conference on Image Processing, IEEE, 2016, pp.~1404--1408.

\bibitem{chambolle2010introduction}
{\sc A.~Chambolle, V.~Caselles, D.~Cremers, M.~Novaga, and T.~Pock}, {\em An introduction to total variation for image analysis}, Theoretical foundations and numerical methods for sparse recovery, 9 (2010), p.~227.

\bibitem{chambolle2015convergence}
{\sc A.~Chambolle and C.~Dossal}, {\em On the convergence of the iterates of the “fast iterative shrinkage/thresholding algorithm”}, Journal of Optimization theory and Applications, 166 (2015), pp.~968--982.

\bibitem{chambolle2016introduction}
{\sc A.~Chambolle and T.~Pock}, {\em An introduction to continuous optimization for imaging}, Acta Numerica, 25 (2016), pp.~161--319.

\bibitem{chambolle2020learning}
{\sc A.~Chambolle and T.~Pock}, {\em Learning consistent discretizations of the total variation}, SIAM Journal on Imaging Sciences, 14 (2021), pp.~778--813.

\bibitem{chan2016plug}
{\sc S.~H. Chan, X.~Wang, and O.~A. Elgendy}, {\em Plug-and-play {ADMM} for image restoration: Fixed-point convergence and applications}, IEEE Transactions on Computational Imaging, 3 (2016), pp.~84--98.

\bibitem{CLWW18}
{\sc X.~Chen, J.~Liu, Z.~Wang, and W.~Yin}, {\em Theoretical linear convergence of unfolded {ISTA} and its practical weights and thresholds}, in Advances in Neural Information Processing Systems, 2018, pp.~9079--9089.

\bibitem{CSM20}
{\sc H.~Cherkaoui, J.~Sulam, and T.~Moreau}, {\em Learning to solve {TV} regularised problems with unrolled algorithms}, in Advances in Neural Information Processing Systems, 2020, pp.~11513--11524.

\bibitem{combettes2010dualization}
{\sc P.~L. Combettes, D.~D{\~u}ng, and B.~C. V{\~u}}, {\em Dualization of signal recovery problems}, Set-Valued and Variational Analysis, 18 (2010), pp.~373--404.

\bibitem{combettes2011proximity}
{\sc P.~L. Combettes, D.~D{\~u}ng, and B.~C. V{\~u}}, {\em Proximity for sums of composite functions}, Journal of Mathematical Analysis and applications, 380 (2011), pp.~680--688.

\bibitem{combettes2011proximal}
{\sc P.~L. Combettes and J.-C. Pesquet}, {\em Proximal splitting methods in signal processing}, Fixed-point algorithms for inverse problems in science and engineering,  (2011), pp.~185--212.

\bibitem{Combettes2005}
{\sc P.~L. Combettes and V.~R. Wajs}, {\em Signal recovery by proximal forward-backward splitting}, Multiscale modeling \& simulation, 4 (2005), pp.~1168--1200.

\bibitem{dabov2009bm3d}
{\sc K.~Dabov, A.~Foi, V.~Katkovnik, and K.~Egiazarian}, {\em {BM3D} image denoising with shape-adaptive principal component analysis}, in Signal Processing with Adaptive Sparse Structured Representations, 2009.

\bibitem{dagreou2022framework}
{\sc M.~Dagr{\'e}ou, P.~Ablin, S.~Vaiter, and T.~Moreau}, {\em A framework for bilevel optimization that enables stochastic and global variance reduction algorithms}, in Advances in Neural Information Processing Systems, 2022, pp.~26698--26710.

\bibitem{DDM04}
{\sc I.~Daubechies, M.~Defrise, and C.~Mol}, {\em An iterative thresholding algorithm for linear inverse problems with a sparsity constrains}, Communications on Pure and Applied Mathematics, 57 (2004).

\bibitem{elad2010sparse}
{\sc M.~Elad}, {\em Sparse and redundant representations: from theory to applications in signal and image processing}, Springer Science \& Business Media, 2010.

\bibitem{EMR07}
{\sc M.~Elad, P.~Milanfar, and R.~Rubinstein}, {\em Analysis versus synthesis in signal priors}, Inverse Problems, 23 (2007), pp.~947--968.

\bibitem{foucart2013invitation}
{\sc S.~Foucart and H.~Rauhut}, {\em An invitation to compressive sensing}, in A mathematical introduction to compressive sensing, Springer, 2013, pp.~1--39.

\bibitem{GKH12}
{\sc A.~Gramfort, M.~Kowalski, and M.~H\"am\"al\"ainen}, {\em Mixed-norm estimates for the {M/EEG} inverse problem using accelerated gradient methods}, Physics in medicine and biology, 57 (2012), pp.~1937--61.

\bibitem{GL10}
{\sc K.~Gregor and Y.~LeCun}, {\em Learning fast approximations of sparse coding}, in International Conference on Machine Learning, PMLR, 2010, pp.~399--406.

\bibitem{hurault2021gradient}
{\sc S.~Hurault, A.~Leclaire, and N.~Papadakis}, {\em Gradient step denoiser for convergent plug-and-play}, in International Conference on Learning Representations, 2022.

\bibitem{hurault2022proximal}
{\sc S.~Hurault, A.~Leclaire, and N.~Papadakis}, {\em Proximal denoiser for convergent plug-and-play optimization with nonconvex regularization}, in International Conference on Machine Learning, PMLR, 2022, pp.~9483--9505.

\bibitem{JP2020}
{\sc M.~Jiu and N.~Pustelnik}, {\em A deep primal-dual proximal network for image restoration}, IEEE Journal of Selected Topics in Signal Processing, 15 (2021), pp.~190--203.

\bibitem{kamilov2023plug}
{\sc U.~S. Kamilov, C.~A. Bouman, G.~T. Buzzard, and B.~Wohlberg}, {\em Plug-and-play methods for integrating physical and learned models in computational imaging: Theory, algorithms, and applications}, IEEE Signal Processing Magazine, 40 (2023), pp.~85--97.

\bibitem{komodakis2015playing}
{\sc N.~Komodakis and J.-C. Pesquet}, {\em Playing with duality: An overview of recent primal-dual approaches for solving large-scale optimization problems}, IEEE Signal Processing Magazine, 32 (2015), pp.~31--54.

\bibitem{le2022fast}
{\sc H.~T.~V. Le, N.~Pustelnik, and M.~Foare}, {\em The faster proximal algorithm, the better unfolded deep learning architecture ? the study case of image denoising}, in European Signal Processing Conference, IEEE, 2022, pp.~947--951.

\bibitem{le2023pnn}
{\sc H.~T.~V. Le, A.~Repetti, and N.~Pustelnik}, {\em Unfolded proximal neural networks for robust image gaussian denoising}, IEEE Transactions on Image Processing,  (2024).

\bibitem{LPM20}
{\sc B.~Lecouat, J.~Ponce, and J.~Mairal}, {\em A flexible framework for designing trainable priors with adaptive smoothing and game encoding}, in Advances in Neural Information Processing Systems, 2020, pp.~15664--15675.

\bibitem{liu2019alista}
{\sc J.~Liu and X.~Chen}, {\em {ALISTA}: Analytic weights are as good as learned weights in {LISTA}}, in International Conference on Learning Representations, 2019.

\bibitem{loris2011generalization}
{\sc I.~Loris and C.~Verhoeven}, {\em On a generalization of the iterative soft-thresholding algorithm for the case of non-separable penalty}, Inverse Problems, 27 (2011), p.~125007.

\bibitem{MBPS09}
{\sc J.~Mairal, F.~Bach, J.~Ponce, and G.~Sapiro}, {\em Online learning for matrix factorization and sparse coding}, Journal of Machine Learning Research, 11 (2009), pp.~19--60.

\bibitem{Mallat2008}
{\sc S.~Mallat}, {\em A {{Wavelet Tour}} of {{Signal Processing}}}, {Academic press}, 2008.

\bibitem{martin2024pnp}
{\sc S.~Martin, A.~Gagneux, P.~Hagemann, and G.~Steidl}, {\em Pnp-flow: Plug-and-play image restoration with flow matching}, arXiv preprint arXiv:2410.02423,  (2024).

\bibitem{moreau1965proximite}
{\sc J.-J. Moreau}, {\em Proximit{\'e} et dualit{\'e} dans un espace hilbertien}, Bulletin de la Soci{\'e}t{\'e} math{\'e}matique de France, 93 (1965), pp.~273--299.

\bibitem{MB17}
{\sc T.~Moreau and J.~Bruna}, {\em Understanding neural sparse coding with matrix factorization}, in International Conference on Learning Representation, 2017.

\bibitem{Olshausen1997}
{\sc B.~A. Olshausen and D.~J. Field}, {\em Sparse coding with an incomplete basis set: A strategy employed by {V1}}, Vision Research, 37 (1997), pp.~3311--3325.

\bibitem{pesquet2021learning}
{\sc J.-C. Pesquet, A.~Repetti, M.~Terris, and Y.~Wiaux}, {\em Learning maximally monotone operators for image recovery}, SIAM Journal on Imaging Sciences, 14 (2021), pp.~1206--1237.

\bibitem{peyre2011learning}
{\sc G.~Peyr{\'e} and J.~M. Fadili}, {\em Learning analysis sparsity priors}, in Sampling Theory and Applications Conference, 2011.

\bibitem{repetti2022dual}
{\sc A.~Repetti, M.~Terris, Y.~Wiaux, and J.-C. Pesquet}, {\em Dual forward-backward unfolded network for flexible plug-and-play}, in European Signal Processing Conference, IEEE, 2022, pp.~957--961.

\bibitem{ribes2008linear}
{\sc A.~Ribes and F.~Schmitt}, {\em Linear inverse problems in imaging}, IEEE Signal Processing Magazine, 25 (2008), pp.~84--99.

\bibitem{rick2017one}
{\sc J.~Rick~Chang, C.-L. Li, B.~Poczos, B.~Vijaya~Kumar, and A.~C. Sankaranarayanan}, {\em One network to solve them all--solving linear inverse problems using deep projection models}, in IEEE/CVF International Conference on Computer Vision, 2017, pp.~5888--5897.

\bibitem{rockafellar1997convex}
{\sc R.~T. Rockafellar}, {\em Convex analysis}, vol.~11, Princeton university press, 1997.

\bibitem{rockafellar2009variational}
{\sc R.~T. Rockafellar and R.~J.-B. Wets}, {\em Variational analysis}, vol.~317, Springer Science \& Business Media, 2009.

\bibitem{romano2017little}
{\sc Y.~Romano, M.~Elad, and P.~Milanfar}, {\em The little engine that could: Regularization by denoising ({RED})}, SIAM Journal on Imaging Sciences, 10 (2017), pp.~1804--1844.

\bibitem{rudin1992nonlinear}
{\sc L.~I. Rudin, S.~Osher, and E.~Fatemi}, {\em Nonlinear total variation based noise removal algorithms}, Physica D: nonlinear phenomena, 60 (1992), pp.~259--268.

\bibitem{ryu2019plug}
{\sc E.~Ryu, J.~Liu, S.~Wang, X.~Chen, Z.~Wang, and W.~Yin}, {\em Plug-and-play methods provably converge with properly trained denoisers}, in International Conference on Machine Learning, PMLR, 2019, pp.~5546--5557.

\bibitem{SEM19}
{\sc M.~Scetbon, M.~Elad, and P.~Milanfar}, {\em Deep {K-SVD} denoising}, IEEE Transactions on Image Processing, 30 (2021), pp.~5944--5955.

\bibitem{St16}
{\sc J.-L. Starck}, {\em Sparsity and inverse problems in astrophysics}, Journal of Physics: Conference Series, 699 (2016).

\bibitem{terris2021enhanced}
{\sc M.~Terris, A.~Repetti, J.-C. Pesquet, and Y.~Wiaux}, {\em Enhanced convergent {PnP} algorithms for image restoration}, in IEEE International Conference on Image Processing, IEEE, 2021, pp.~1684--1688.

\bibitem{TDB20}
{\sc B.~Tolooshams, S.~Dey, and D.~Ba}, {\em Deep residual autoencoders for expectation maximization-inspired dictionary learning}, IEEE Transactions on Neural Networks and Learning Systems, 6 (2020), pp.~2415--2429.

\bibitem{tseng2000modified}
{\sc P.~Tseng}, {\em A modified forward-backward splitting method for maximal monotone mappings}, SIAM Journal on Control and Optimization, 38 (2000), pp.~431--446.

\bibitem{YNGD13}
{\sc M.~Yaghoobi, S.~Nam, R.~Gribonval, and M.~E. Davies}, {\em Constrained overcomplete analysis operator learning for cosparse signal modelling}, IEEE Transactions on Signal Processing, 61 (2013), pp.~2341--2355.

\bibitem{zhang2021plug}
{\sc K.~Zhang, Y.~Li, W.~Zuo, L.~Zhang, L.~Van~Gool, and R.~Timofte}, {\em Plug-and-play image restoration with deep denoiser prior}, IEEE Transactions on Pattern Analysis and Machine Intelligence, 44 (2021), pp.~6360--6376.

\bibitem{zhang2017beyond}
{\sc K.~Zhang, W.~Zuo, Y.~Chen, D.~Meng, and L.~Zhang}, {\em Beyond a {Gaussian} denoiser: Residual learning of deep {CNN} for image denoising}, IEEE Transactions on Image Processing, 26 (2017), pp.~3142--3155.

\end{thebibliography}


\appendix

\section{Loris-Verhoeven primal-dual algorithm}
\label{ASec:lvpda}

With our notation, the scaled Loris-Verhoeven primal-dual algorithm proposed in~\cite[Section~6, eq.~(40)]{loris2011generalization} is designed to 
$\text{find } \widehat{x} \in \Argmind{x \in \Rset^N} \frac12 \| Ax - y \|^2 + g_\lambda(\Dica x)$,
which is the same problem as stated in~\eqref{pb:inv-min-AF}.
The iterations to solve this problem are given by
\begin{equation*}
\begin{array}{l}
    \text{for } k = 0, 1, \ldots \\
    \left\lfloor
    \begin{array}{l}
        \widetilde{x}_{k+1} = x_k - \tau A^*(Ax_k - y) - \tau \Dica^* \widetilde{u}_k \\
        \widetilde{u}_{k+1} = \prox_{\frac{\sigma}{\tau} g_\lambda^*}( \widetilde{u}_k + \frac{\sigma}{\tau} \Dica \widetilde{x}_{k+1} ) \\
        x_{k+1} = x_k - \tau A^*(Ax_k - y) - \tau \Dica^* \widetilde{u}_{k+1},
    \end{array}
    \right.
\end{array}
\end{equation*}
with $\tau<2/\|A^*A\|$ and $\sigma<1/\|\Dica^*\Dica\|$.

\section{FB-PnP with DRUnet}
\label{ASec:drunet}

DRUnet is a \textit{noise-aware} denoising network that has been proposed in~\cite{zhang2021plug}. It has a U-Net like structure, with convolutional blocks in the encoder and decoder parts, and it takes into account the noise level of the input image as an extra input channel. 
It is considered as a state-of-the-art denoising network to use in PnP frameworks, and trained DRUnet can be found on PyTorch-based libraries such as \href{https://deepinv.github.io/deepinv/user_guide/reconstruction/denoisers.html}{DeepInv}.
Then, if $G \colon \Rset^N \times \Rset^N \to \Rset^N$ denotes the DRunet operator, the corresponding FB-PnP iterations are given as
\begin{equation}    \label{algo:FB-PnP-drunet}
    \begin{array}{l}
       \text{for } k = 0, 1, \ldots \\
       \left\lfloor
       \begin{array}{l}
        x_{k+1} = G \big( x_k - \tau A^*(A x_k - y), \, \lambda \tau \operatorname{Id}_N \big),
       \end{array}
       \right.
    \end{array}
\end{equation}
where $\lambda>0$ denotes the input noise level that acts as a regularization parameter on the network, and $\tau>0$ is the step-size chosen to ensure convergence of $(x_k)_{k\in \mathbb N}$.

\end{document}